\newtheorem{theorem}{Theorem}
\newtheorem{lemma}[theorem]{Lemma}
\newtheorem{example}[]{Example}
\newtheorem{proposition}[theorem]{Proposition}
\newenvironment{remark}[1][Remark]{\begin{trivlist}
\item[\hskip \labelsep {\bfseries #1}]}{\end{trivlist}}
\DeclareMathOperator*{\argmin}{argmin}
\DeclareMathOperator*{\argmax}{argmax}
\begin{document}

\begin{frontmatter}

% "Title of the paper"
\title{Local nearest neighbour classification with applications to semi-supervised learning}
\runtitle{Local nearest neighbour classification}

% indicate corresponding author with \corref{}
% \author{\fnms{John} \snm{Smith}\corref{}\ead[label=e1]{smith@foo.com}\thanksref{t1}}
% \thankstext{t1}{Thanks to somebody} 
% \address{line 1\\ line 2\\ printead{e1}}
% \affiliation{Some University}

\begin{aug}
\author{\fnms{Timothy I.} \snm{Cannings},\ead[label=e1]{timothy.cannings@ed.ac.uk}\ead[label=u1,url]{http://www.maths.ed.ac.uk/\%7Etcannings/}\thanksref{m1}}

\author{\fnms{Thomas B.} \snm{Berrett}\ead[label=e2]{t.berrett@statslab.cam.ac.uk}\ead[label=u2,url]{www.statslab.cam.ac.uk/\%7Etbb26/}\thanksref{m2,t1}}
\and
\author{\fnms{Richard J.} \snm{Samworth}\corref{}\ead[label=e3]{r.samworth@statslab.cam.ac.uk}\ead[label=u3,url]{www.statslab.cam.ac.uk/\%7Erjs57/}\thanksref{m2,t2}} 

\affiliation{University of Edinburgh\thanksmark{m1} and University of Cambridge\thanksmark{m2}}

\thankstext{t1}{Research supported by an Engineering and Physical Sciences Research Council (EPSRC) programme grant.}

\thankstext{t2}{Research supported by an EPSRC Fellowship and programme grant, as well as a grant from the Leverhulme Trust.}

\address{School of Mathematics \\ James Clerk Maxwell Building\\ Peter Guthrie Tait Road\\ Edinburgh EH9 3FD\\ \printead{e1}\\ \printead{u1}}

\address{Statistical Laboratory \\ Centre for Mathematical Sciences\\ Wilberforce Road\\ Cambridge  CB3 0WB\\ \printead{e2} \\ \printead{u2}\\ \printead{e3}\\\printead{u3}}

\runauthor{T. I. Cannings, T. B. Berrett and R. J. Samworth}
\end{aug}

\begin{abstract}
  We derive a new asymptotic expansion for the global excess risk of a local-$k$-nearest neighbour classifier, where the choice of $k$ may depend upon the test point.  This expansion elucidates conditions under which the dominant contribution to the excess risk comes from the decision boundary of the optimal Bayes classifier, but we also show that if these conditions are not satisfied, then the dominant contribution may arise from the tails of the marginal distribution of the features.  Moreover, we prove that, provided the $d$-dimensional marginal distribution of the features has a finite $\rho$th moment for some $\rho > 4$ (as well as other regularity conditions), a local choice of $k$ can yield a rate of convergence of the excess risk of $O(n^{-4/(d+4)})$, where $n$ is the sample size, whereas for the standard $k$-nearest neighbour classifier, our theory would require $d \geq 5$ and $\rho > 4d/(d-4)$ finite moments to achieve this rate.  These results motivate a new $k$-nearest neighbour classifier for semi-supervised learning problems, where the unlabelled data are used to obtain an estimate of the marginal feature density, and fewer neighbours are used for classification when this density estimate is small.  Our worst-case rates are complemented by a minimax lower bound, which reveals that the local, semi-supervised $k$-nearest neighbour classifier attains the minimax optimal rate over our classes for the excess risk, up to a subpolynomial factor in $n$.  These theoretical improvements over the standard $k$-nearest neighbour classifier are also illustrated through a simulation study.  
\end{abstract}

\begin{keyword}[class=MSC]
\kwd[]{62G20}
%\kwd[; secondary ]{}
\end{keyword}

\begin{keyword}
\kwd{classification problems}
\kwd{nearest neighbours}
\kwd{nonparametric classification}
\kwd{semi-supervised learning}
\end{keyword}

\end{frontmatter}

\section{Introduction}

Supervised classification problems represent some of the most frequently-occurring statistical challenges in a wide variety of fields, including fraud detection, medical diagnoses and targeted advertising, to name just a few.  The area has received an enormous amount of attention within both the statistics and machine learning communities; for an excellent survey with pointers to much of the relevant literature, see \citet{BBL:05}.

The $k$-nearest neighbour classifier, which assigns the test point according to a majority vote over the classes of its $k$ nearest points in the training set, was introduced in the seminal work of \citet{Fix:51} (later republished as \citet{Fix:89}), and is arguably the simplest and most intuitive nonparametric classifier.  \citet{Cover:67} provided mild conditions under which the asymptotic risk of the $1$-nearest neighbour classifier is bounded above by twice the risk of the optimal Bayes classifier.  \citet{Stone:77} proved that if $k=k_n$ is chosen such that $k \rightarrow \infty$ and $k/n \rightarrow 0$ as $n \rightarrow \infty$, then the $k$-nearest neighbour classifier is universally consistent, in the sense that under any data generating mechanism, its risk converges to the Bayes risk.  Further recent contributions, some of which treat the $k$-nearest neighbour classifier as a special case of a plug-in classifier, include \citet{Kulkarni:95}, \citet{Audibert:07}, \citet{Hall:08}, \citet{Biau:10}, \citet{Samworth:12a}, \citet{Chaudhuri:14} and \citet{Celisse:2018}.  Nearest neighbour methods have also been extensively used in other statistical problems, including density estimation \citep{Loftsgaarden:1965,Mack:1979,Mack:1983}, nonparametric clustering, \citep{Heckel:15}, entropy and other functional estimation \citep{Kozachenko:87,Berrett:16,Berrett:19} and testing problems \citep{Schilling:86,Berrett:19b}; see also the recent book \citet{Biau:15}. 

Despite these aforementioned works, the behaviour of the $k$-nearest neighbour classifier in the tails of a distribution remains poorly understood.  Indeed, writing $(X,Y)$ for a generic data pair, where the $d$-dimensional feature vector $X$ has marginal density $\bar{f}$ and $Y$ denotes a binary class label, most of the results in the papers mentioned in the previous paragraph pertain either to situations where $\bar{f}$ is compactly supported and bounded away from zero on its support, or where the excess risk over that of the Bayes classifier is computed only over a compact subset of $\mathbb{R}^d$.  As such, many questions remain regarding the effect of tail behaviour on the excess risk. 

In this paper, we consider classes of distributions that allow the feature vectors to have unbounded support.  Our first goal is to provide a new asymptotic expansion for the global excess risk of a $k$-nearest neighbour classifier, whose error term can be bounded uniformly over our classes (Theorem~\ref{thm:exriskRdstandard}).  This expansion elucidates conditions under which the dominant contribution to the excess risk comes from the decision boundary of the Bayes classifier, but we also show that if these conditions are not satisfied, then the dominant contribution may arise from the tails of the marginal distribution of the features.  The threshold for these two different regimes is governed by a parameter $\rho$ that controls the number of finite moments of the marginal feature distribution: if $d \geq 5$ and $\rho > 4d/(d-4)$, then we obtain a rate of $O(n^{-4/(d+4)})$ uniformly over our classes, while if $d \leq 4$ or $d \geq 5$ and $\rho \leq 4d/(d-4)$ then our rate is slower, namely $O(n^{-\frac{\rho}{2\rho+d}+\epsilon})$, for every $\epsilon > 0$.

The proof of Theorem~\ref{thm:exriskRdstandard} also reveals a local bias-variance trade-off that motivates a modification of the standard $k$-nearest neighbour classifier in semi-supervised learning settings, where, as well as the labelled training data, we have access to another, independent, sample of unlabelled observations.  Such semi-supervised problems occur in a wide range of applications, especially where it is expensive or time-consuming to obtain the labels associated with observations; in fact, it is often the case that unlabelled observations may vastly outnumber labelled ones.  For an overview of semi-supervised learning applications and techniques, see \citet{Chapelle:06}.  

Our second contribution is to propose to allow the choice of $k$ in $k$-nearest neighbour classification to depend on an estimate of $\bar{f}$ at the test point $x \in \mathbb{R}^d$ in semi-supervised settings.  Such a local choice of $k$ is analagous to the use of local bandwidths in the context of kernel density estimation, as studied by, e.g., \citet{Breiman1977}, \citet{Abramson1982} and \citet{Gine2010}.  However, for density estimation, it is more common to choose a family of bandwidths $\{h(X_i):i=1,\ldots,n\}$ rather than $h = h(x)$, to ensure that the resulting estimate is itself a density.  Moreover, theory there suggests that one should then choose $h(X_i) \propto \bar{f}^{-1/2}(X_i)$ in order to cancel the leading term in the asymptotic bias expansion \citep{Abramson1982}.  By contrast, we find that when choosing $k = k(x)$, by using \emph{fewer} neighbours in low density regions, we are able to achieve a better balance in the local bias-variance trade-off for estimating our main quantity of interest, namely the regression function.  In particular, we initially study an oracle choice of $k = k(x)$ that depends on $\bar{f}(x)$, and show that the excess risk of the resulting classifier, computed over the whole of $\mathbb{R}^d$, is $O(n^{-4/(d+4)})$, again uniformly over our classes, for every $d \in \mathbb{N}$ and provided only that $\rho > 4$.  Moreover, in the more challenging case where $\rho \leq 4$, we obtain a rate of $O(n^{-\frac{\rho}{\rho+d}+\epsilon})$, for every $\epsilon > 0$, which still reflects an improvement through the locally-adaptive choice of $k$.  Assuming further that $\bar{f}$ has H\"older smoothness $\gamma \in (0,2]$, we show that if $m$ additional, unlabelled observations are used to estimate $\bar{f}$ by $\hat{f}_m$, and if $m = m_n$ satisfies $\liminf_{n \rightarrow \infty} m_n/n^{2 + d/\gamma} > 0$, then our semi-supervised $k$-nearest-neighbour classifier mimics the asymptotic performance of the oracle.

Finally, we consider corresponding minimax lower bounds.  We show in particular that the rates of convergence achieved by our semi-supervised, local-$k$-nearest neighbour classifier are optimal up to subpolynomial factors in $n$.  Interestingly, our arguments also reveal that these rates cannot be improved with the additional knowledge of $\bar{f}$.  

As mentioned previously, studies of global excess risk rates of convergence in nonparametric classification for unbounded feature vector distributions are comparatively rare.  \citet{Hall:05a} studied the tail error properties of a classifier based on kernel density estimates of the class conditional densities for univariate data.  As an illustrative example, they showed that if, for large $x$, one class has density $ax^{-\alpha}$, while the other has density $bx^{-\beta}$, for some $a,b >0$ and $1<\alpha<\beta<\alpha+1<\infty$, then the excess risk from the right tail is of larger order than that in the body of the distribution.  

Perhaps most closely related to this work, \citet{Gadat:16} recently obtained upper bounds on the supremum excess risk of the $k$-nearest neighbour classifier, over classes where $\eta$ is Lipschitz, the well-known margin assumption of \citet{Mammen:99} is satisfied with parameter $\alpha > 0$, and assuming the tail condition that $\mathbb{P}\{\bar{f}(X) < \delta\} \leq \psi(\delta)$ is satisfied for some function $\psi$ and sufficiently small $\delta > 0$.  \citet{Gadat:16} obtained a minimax lower bound over these classes, as well as providing an upper bound for the rate of the standard $k$-nearest neighbour classifier.  Since these rates do not match, they further introduced regions of the form $\bigl\{\bar{f}^{-1}\bigl((a_{j+1},a_j]\bigr):j \in \mathbb{N}\bigr\}$ with $a_{j+1} = a_j/2$, and proved that when we choose $k=k(j)$ and specialise to the case where $\psi$ is the identity function, the resulting sliced $k$-nearest neighbour classifier attains the minimax optimal rate of $n^{-(1+\alpha)/(2+\alpha+d)}$ up to a polylogarithmic factor in $n$.  Neither our smoothness and tail assumptions, nor our conclusions are directly comparable with the work of \citet{Gadat:16}.  In particular, we make a stronger smoothness assumption on $\eta$ in a neighbourhood of the Bayes decision boundary, implying that the margin assumption holds with parameter $\alpha = 1$; see Lemma~\ref{Lemma:Margin} in Appendix \ref{Sec:Margin}.  This enables us to show that our semi-supervised classifier attains faster rates than are achievable under just a Lipschitz condition, and that these rates are minimax optimal up to subpolynomial factors in $n$, over all possible values of our tail parameter $\rho$; moreover, we are also able to provide the leading constants in the asymptotic expansion of the excess risk in some cases.

The remainder of this paper is organised as follows.  After introducing our setting in Section~\ref{Sec:Prelim}, we present in Section~\ref{sec:mainstandard} our main results for the standard $k$-nearest neighbour classifier.  This leads on, in Section~\ref{sec:semisup}, to our study of the semi-supervised setting, where we derive asymptotic results of the excess risk of our local-$k$-nearest neighbour classifier.  Our minimax lower bound in presented in Section~\ref{Sec:LowerBounds}.  The main arguments of the proofs of our theoretical results are given in Section~\ref{sec:appendix}, while in the appendices, we prove several claims made in the main text, bound various remainder terms, illustrate the finite-sample benefits of the semi-supervised classifier over the standard $k$-nearest neighbour classifier in a simulation study and provide an introduction to the ideas of differential geometry that underpin much of our analysis. 

Finally we fix here some notation used throughout the paper.  Let $\|\cdot\|$ denote the Euclidean norm and, for $r>0$ and $x\in \mathbb{R}^d$, let $B_{r}(x) := \{z \in \mathbb{R}^d : \|x-z\|< r\}$ and $\bar{B}_r(x) := \{z \in \mathbb{R}^d : \|x-z\| \leq r\}$ denote respectively the open and closed Euclidean balls of radius $r$ centred at $x$. Let $a_d := \frac{2\pi^{d/2}}{d\Gamma(d/2)}$ denote the $d$-dimensional Lebesgue measure of $B_1(0)$.  For a real-valued function $g$ defined on $A \subseteq \mathbb{R}^d$ that is twice differentiable at $x$, write $\dot{g}(x) = (g_1(x),\ldots,g_d(x))^T$ and $\ddot{g}(x) = \bigl(g_{jk}(x)\bigr)$ for its gradient vector and Hessian matrix at $x$, and let $\|g\|_{\infty} = \sup_{x\in A} |g(x)|$.  We write $\|\cdot\|_{\mathrm{op}}$ for the operator norm of a matrix. 

\section{Statistical setting}
\label{Sec:Prelim}
Let  $(X, Y), (X_1,Y_1), \dots, (X_{n+m}, Y_{n+m})$ be independent and identically distributed random pairs taking values in $\mathbb{R}^d \times \{0,1\}$.  Let $\pi_r := \mathbb{P}(Y = r)$, for $r =0,1$, and $X | Y = r \sim P_r$, for $r=0,1$, where $P_r$ is a probability measure on $\mathbb{R}^d$. Let $\eta(x) := \mathbb{P}(Y=1 | X=x)$ denote the regression function and $P_X := \pi_{0} P_0 + \pi_{1} P_1$ denote the marginal distribution of $X$.   We observe \textit{labelled training data}, $\mathcal{T}_n := \{(X_1,Y_1), \dots, (X_n, Y_n)\}$, and \textit{unlabelled training data}, $\mathcal{T}_m' := \{X_{n+1}, \dots, X_{n+m}\}$, and are presented with the task of assigning the \textit{test point} $X$ to either class 0 or 1.

A \textit{classifier} is a Borel measurable function $C : \mathbb{R}^d \to \{0,1\}$, with the interpretation that $C$ assigns $x \in \mathbb{R}^d$ to the class $C(x)$.  Given a Borel measurable set  $\mathcal{R} \subseteq \mathbb{R}^d$, the misclassification rate, or \textit{risk}, over $\mathcal{R}$ is
\[
R_{\mathcal{R}}(C): = \mathbb{P}[\{C(X) \neq Y\}\cap\{X\in\mathcal{R}\}].
\]
When $\mathcal{R} = \mathbb{R}^d$, we drop the subscript for convenience.  The \textit{Bayes classifier}
\[
 C^{\mathrm{Bayes}}(x) := \left\{ \begin{array}{ll}
         1 & \mbox{if $\eta(x) \geq 1/2$};\\
         0 & \mbox{otherwise},\end{array} \right.
\]
minimises the risk over any region $\mathcal{R}$ \citep[p.~20]{PTPR:96}.  The performance of a classifier $C$ is therefore measured via its \textit{excess risk}, $R_{\mathcal{R}}(C)-R_{\mathcal{R}}(C^{\mathrm{Bayes}})$.  

We can now formally define the local-$k$-nearest neighbour classifier, which allows the number of neighbours considered to vary depending on the location of the test point.  Suppose $k_{\mathrm{L}}: \mathbb{R}^d \to \{1, \dots,n\}$ is measurable. Given the test point $x \in \mathbb{R}^d$, let $(X_{(1)}, Y_{(1)}), \ldots, (X_{(n)}, Y_{(n)})$ be a reordering of the training data such that $\|X_{(1)} - x\| \leq \dots \leq \|X_{(n)}-x\|$.  We will later assume that $P_X$ is absolutely continuous with respect to $d$-dimensional Lebesgue measure, which ensures that ties occur with probability zero; where helpful for clarity, we also write $X_{(i)}(x)$ for the $i$th nearest neighbour of $x$.  Let $\hat{S}_{n}(x) := k_{\mathrm{L}}(x)^{-1} \sum_{i=1}^{k_{\mathrm{L}}(x)} \mathbbm{1}_{\{Y_{(i)}=1\}}$.  Then the \textit{local-$k$-nearest neighbour ($k_{\mathrm{L}}$nn) classifier} is defined to be 
\[
\hat{C}_n^{k_{\mathrm{L}}\mathrm{nn}}(x)  := \left\{ \begin{array}{ll}
         1 & \mbox{if $\hat{S}_{n}(x) \geq 1/2$};\\
         0 & \mbox{otherwise}.\end{array} \right.
\]

Given $k \in \{1, \ldots, n\}$, let $k_0$ denote the constant function $k_{0}(x) := k$ for all $x \in \mathbb{R}^{d}$.  Using $k_{\mathrm{L}} = k_0$ the definition above reduces to the standard \textit{$k$-nearest neighbour classifier} ($k$nn), and we will write $\hat{C}_n^{k\mathrm{nn}}$ in place of $\hat{C}_n^{k_0\mathrm{nn}}$.  For $\beta \in (0,1/2)$, let 
\[
K_{\beta} \equiv K_{\beta,n} : = \bigl\{\lceil (n-1)^\beta \rceil, \lceil (n-1)^\beta\rceil+ 1, \ldots, \lfloor (n-1)^{1-\beta} \rfloor \bigr\}
\]
denote a range of values of $k$ that will be of interest to us. Note that  $K_{\beta_{1}} \supseteq K_{\beta_{2}}$, for $\beta_{1} < \beta_{2}$.   Moreover, when $\beta$ is small, the restriction that $k \in K_\beta$ is only a slightly stronger requirement than the consistency conditions of \citet{Stone:77}, namely that $k = k_{n} \to \infty$, $k_{n}/n \to 0$ as $n \to \infty$. 

\section{Global risk of the $k$-nearest neighbour classifier}
\label{sec:mainstandard}
In this section we provide an asymptotic expansion for the global risk of the standard (non-local) $k$-nearest neighbour classifier.  We first define the classes of data generating mechanisms over which our results will hold.  Let $\mathcal{L}$ denote the class of decreasing functions $\ell:(0,\infty) \rightarrow [1,\infty)$ such that $\ell(\delta) = o(\delta^{-\tau})$ as $\delta \searrow 0$, for every $\tau > 0$.  Let $\mathcal{G}$ denote the class of strictly increasing functions $g:(0,1) \rightarrow (0,1)$ with $g(\epsilon) = o(\epsilon^M)$ as $\epsilon \searrow 0$, for every $M > 0$.  Recall from Section~\ref{Sec:Prelim} that, to any distribution $P$ on $\mathbb{R}^d \times \{0,1\}$, we associate conditional distributions $P_{0}, P_{1}$, a regression function $\eta$, marginal probabilities $\pi_{0}, \pi_{1}$ and a marginal distribution $P_X$.  Now, for $\Theta := (0,\infty) \times [1,\infty) \times (0,\infty) \times \mathcal{L} \times \mathcal{G}$, and $\theta = (\epsilon_0,M_0,\rho,\ell,g) \in \Theta$, let $\mathcal{P}_{d,\theta}$ denote the class of distributions $P$ on $\mathbb{R}^d \times \{0,1\}$ such that the probability measures $P_0$ and $P_1$ are absolutely continuous with respect to Lebesgue measure, with Radon--Nikodym derivatives $f_0$ and $f_1$, respectively.  Moreover, we assume that there exist versions of $f_0$ and $f_1$ for which the following conditions hold:
\begin{description}
\item[\textbf{(A.1)}]
  \ \ The marginal density of $X$, namely $\bar{f} := \pi_{0} f_{0} + \pi_{1} f_{1}$, is continuous $P_X$-almost everywhere and the set $\mathcal{X}_{\bar{f}}$ of continuity points of $\bar{f}$ is open. 
\end{description}
Thus $\eta(x) := \pi_1f_1(x)/\{\pi_0f_0(x) + \pi_1f_1(x)\}$, where we define $0/0 := 0$.  Let $\mathcal{S} := \{x \in \mathbb{R}^d : \eta(x)= 1/2\}$ and, for $\epsilon > 0$, let $\mathcal{S}^\epsilon := \mathcal{S} + B_\epsilon(0)$.  In our assumptions below, we will place further assumptions on $\mathcal{S}$, which ensure not only that this set is non-empty, but in fact that it is a $(d-1)$-dimensional, orientable manifold.
\begin{description}
\item[\textbf{(A.2)}]
\ \ The set $\mathcal{S} \cap \{x \in \mathbb{R}^d : \bar{f}(x) > 0\}$ is non-empty and $\sup_{x_0 \in \mathcal{S}}\bar{f}(x_0) \leq M_0$.  The function $\bar{f}$ is twice continuously differentiable on $\mathcal{S}^{\epsilon_0}$, and
\begin{equation}
\label{Eq:A2eq}
\max\biggl\{ \|\dot{\bar{f}}(x_0)\|, \sup_{u \in B_{\epsilon_0}(0)}\|\ddot{\bar{f}}(x_0+u)\|_{\mathrm{op}}\biggr\} \leq \bar{f}(x_0) \ell\bigl(\bar{f}(x_0)\bigr),
\end{equation}
for all $x_0 \in \mathcal{S}$.  Furthermore, writing $p_r(x) := P_X\bigl(B_r(x)\bigr)$, we have for all $x \in \mathbb{R}^{d} \setminus \mathcal{S}^{\epsilon_0}$ and $r \in (0,\epsilon_{0}]$ that
\[
p_r(x) \geq \epsilon_0 a_d r^{d} \bar{f}(x).
\]
\item[\textbf{(A.3)}] 
  \ \ We have that $\eta$ is twice differentiable on $\mathcal{S}^{2\epsilon_0}$ with $\inf_{x_0\in \mathcal{S}}\|\dot{\eta}(x_0)\| \geq \epsilon_0M_0$.  Moreover, $\sup_{x \in \mathcal{S}^{2\epsilon_0}} \|\dot{\eta}(x)\| \leq M_0$, $\sup_{x\in \mathcal{S}^{2\epsilon_0}} \|\ddot{\eta} (x)\|_{\mathrm{op}} \leq M_0$ and given $\epsilon > 0$,
  \[
    \sup_{x,z \in \mathcal{S}^{2\epsilon_0}:\|z-x\| \leq g(\epsilon)}  \|\ddot{\eta}(z) - \ddot{\eta}(x)\|_{\mathrm{op}} \leq \epsilon.
  \]
Finally, the function $\eta$ is continuous on $\{x:\bar{f}(x) > 0\}$, and
\[
  |\eta(x) - 1/2| \geq \frac{1}{\ell\bigl(\bar{f}(x)\bigr)}
\]
for all $x \in \mathbb{R}^d \setminus \mathcal{S}^{\epsilon_0}$.

\item[\textbf{(A.4)}] 
\  We have $\int_{\mathbb{R}^d}  \|x\|^{\rho} \, dP_{X}(x) \leq M_0$. 
\end{description}

\begin{example}
\label{eg:classexample}
Consider the distribution $P$ on $\mathbb{R}^d \times \{0,1\}$ for which $\bar{f}(x)=\frac{\Gamma(3+d/2)}{2 \pi^{d/2}}(1-\|x\|^2)^2 \mathbbm{1}_{\{x \in B_1(0)\}}$ and $\eta(x) = \min(\|x\|^2,1)$.  In Appendix~\ref{Sec:Example}, we show that $P \in \mathcal{P}_{d,\theta}$ with $\theta = (\epsilon_0,M_0,\rho,\ell,g) \in \Theta$ for any $\rho>0$, $g \in \mathcal{G}$, and provided that $M_0 \geq \max\bigl\{2  , \frac{\Gamma(3+d/2)}{8 \pi^{d/2}} \bigr\}$, $\epsilon_0 \leq \min\bigl(\frac{1}{10},2^{-d},\frac{2^{1/2}}{M_0}\bigr)$ and $\ell \in \mathcal{L}$ satisfies $\ell(\delta) \geq \max(48,\epsilon_0^{-1})$ for all $\delta > 0$.
\end{example}

Asking for $P_X$ to have a Lebesgue density allows us to define the tail of the distribution as the region where $\bar{f}$ is smaller than some threshold.  Condition~\textbf{(A.1)} ensures that for all $\delta > 0$ sufficiently small, the set $\mathcal{R} := \{x:\bar{f}(x) > \delta\} \cap \mathcal{X}_{\bar{f}}$ is a $d$-dimensional manifold, and $P_X(\mathcal{R}^c) \leq \mathbb{P}\bigl\{\bar{f}(X) \leq \delta\bigr\}$, where the latter quantity can be bounded using~\textbf{(A.4)}.  The first part of~\textbf{(A.2)} asks for a certain level of smoothness for $\bar{f}$ in a neighbourhood of $\mathcal{S}$, and controls the behaviour of its first and second derivatives there relative to the original density.  In particular, the greater degree of regularity asked of these derivatives in the tails of the marginal density in~\eqref{Eq:A2eq} allows us still to control the error of a Taylor approximation even in this region.  The condition~\eqref{Eq:A2eq} is satisfied by all Gaussian and multivariate-$t$ densities, for example, for appropriate choices of $\epsilon_0$ and~$\ell$.  The last part of~\textbf{(A.2)} concerns the behaviour of the marginal feature distribution away from $\mathcal{S}^{\epsilon_0}$ and is often referred to as the strong minimal mass assumption \citep[e.g.][]{Gadat:16}.  It requires that the mass of the marginal feature distribution is not concentrated in the neighbourhood of a point and is a rather weaker condition than we ask for on $\mathcal{S}^{\epsilon_0}$; in particular, we do not insist that derivatives of $\bar{f}$ exist in this region.  

The condition $\inf_{x_0\in \mathcal{S}}\|\dot{\eta}(x_0)\| \geq \epsilon_0M_0$ in~\textbf{(A.3)} asks for the class conditional densities, when weighted by their respective prior probabilities, to cross at an angle; in particular, this ensures that $\mathcal{S}$ is a $(d-1)$-dimensional, orientable manifold (cf. Section~\ref{Sec:Forms}).  Moreover, the bounds on the first and second derivatives of $\eta$ in a neighbourhood of $\mathcal{S}$ ensure that we can estimate $\eta$ sufficiently well.  The last part of~\textbf{(A.3)} asks that~$\eta$ does not approach the critical value of $1/2$ too fast on the complement of $\mathcal{S}^{\epsilon_0}$.  Assumption~\textbf{(A.4)} is a simple moment condition that, together with~\textbf{(A.2)}, ensures that the constants $B_1$ and $B_2$ in~\eqref{eq:B1B2defRd} below are finite where needed.

Let $d\mathrm{Vol}^{d-1}$ denote the $(d-1)$-dimensional volume form on $\mathcal{S}$ (cf.~Section~\ref{Sec:Forms}).  Now let
\begin{equation}
\label{eq:B1B2defRd}
B_{1} := \int_{\mathcal{S}} \frac{\bar{f}(x_0)}{4\|\dot\eta(x_0)\|} \, d\mathrm{Vol}^{d-1}(x_0) \ \mathrm{and} \ B_{2} := \int_{\mathcal{S}} \frac{\bar{f}(x_0)^{1-4/d}}{\|\dot\eta(x_0)\|}a(x_0)^2 \, d\mathrm{Vol}^{d-1}(x_0),
\end{equation}
where
\begin{equation}
\label{Eq:a}
a(x) := \frac{\sum_{j=1}^d \bigl\{\eta_j(x)\bar{f}_j(x) + \frac{1}{2}\eta_{jj}(x)\bar{f}(x)\bigr\}}{(d+2)a_d^{2/d}\bar{f}(x)}.
\end{equation}
We are now in a position to present our asymptotic expansion for the global excess risk of the standard $k$-nearest neighbour classifier.
\begin{theorem}
\label{thm:exriskRdstandard}
Fix $d \in \mathbb{N}$ and $\theta = (\epsilon_0,M_0,\rho,\ell,g) \in \Theta$ such that $\mathcal{P}_{d,\theta} \neq \emptyset$.  

(i) Suppose that $d \geq 5$ and $\rho > \frac{4d}{d-4}$.  Then for each $\beta \in (0,1/2)$,
\[
  \sup_{P \in \mathcal{P}_{d,\theta}} \biggl|R(\hat{C}_n^{k\mathrm{nn}}) - R(C^{\mathrm{Bayes}}) - \frac{B_1}{k} - B_{2} \Bigl(\frac{k}{n}\Bigr)^{4/d}\biggr| = o\biggl(\frac{1}{k} + \Bigl(\frac{k}{n}\Bigr)^{4/d}\biggr)
  \]
as $n \to \infty$, uniformly for $k \in K_{\beta}$.

 (ii) Suppose that either $d \leq 4$, or, $d \geq 5$ and $\rho \leq \frac{4d}{d-4}$.  Then for each $\beta \in (0,1/2)$ and each $\epsilon> 0$ we have
\[
\sup_{P \in \mathcal{P}_{d,\theta}} \biggl|R(\hat{C}_n^{k\mathrm{nn}}) - R(C^{\mathrm{Bayes}}) - \frac{B_{1}}{k}\biggr| = o\Bigl(\frac{1}{k} +\Bigl(\frac{k}{n}\Bigr)^{\frac{\rho}{\rho+d} - \epsilon}\Bigr)
\]
as $n \to \infty$, uniformly for $k \in K_{\beta}$.
\end{theorem}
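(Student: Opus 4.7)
The plan is to start from the pointwise integral representation
\[
R(\hat{C}_n^{k\mathrm{nn}}) - R(C^{\mathrm{Bayes}}) = \int_{\mathbb{R}^d} |2\eta(x)-1| \, \mathbb{P}\bigl\{\hat{C}_n^{k\mathrm{nn}}(x) \neq C^{\mathrm{Bayes}}(x)\bigr\} \, dP_X(x),
\]
and partition $\mathbb{R}^d$ into three pieces governed by thresholds $\epsilon_n \searrow 0$ and $\delta_n \searrow 0$: the boundary tube $\mathcal{R}_1 := \mathcal{S}^{\epsilon_n}$, the bulk $\mathcal{R}_2 := \{\bar{f} \geq \delta_n\}\setminus \mathcal{R}_1$, and the tail $\mathcal{R}_3 := \{\bar{f} < \delta_n\}$. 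The dominant contribution will come from $\mathcal{R}_1$; the other two pieces are to be rendered negligible by an appropriate choice of $\delta_n$ balanced against the target rate.

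On $\mathcal{R}_1$, I would use a normal coordinate system $x = x_0 + t\nu(x_0)$ around $\mathcal{S}$, valid on $\mathcal{S}^{\epsilon_n}$ by~\textbf{(A.3)}, together with the coarea-type factorisation $dP_X(x) = \bar{f}(x_0)\bigl(1+O(\epsilon_n)\bigr)\,dt\,d\mathrm{Vol}^{d-1}(x_0)$. Writing $\bar\eta_k(x) := k^{-1}\sum_{i=1}^k \eta(X_{(i)})$, the core analytic step is to show that $\mathbb{E}[\bar\eta_k(x)\mid X=x] - \eta(x) = a(x)\bar{f}(x)^{-2/d}(k/n)^{2/d} + o\bigl((k/n)^{2/d}\bigr)$ uniformly over $\mathcal{S}^{\epsilon_n}$, with $a(x)$ from~\eqref{Eq:a}: this follows from a second-order Taylor expansion of $\eta$ combined with the classical approximation that the $i$th nearest-neighbour distance concentrates near $\{i/(na_d\bar{f}(x))\}^{1/d}$, both controlled by the smoothness and strong minimal mass parts of~\textbf{(A.2)}. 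A Berry--Esseen bound applied to $\hat S_n(x) - \bar\eta_k(x)$ then approximates the pointwise misclassification probability by a normal CDF whose argument combines $|\eta(x) - 1/2|$, the local bias and the variance scale $1/\sqrt{k}$. Writing $|\eta(x) - 1/2| \approx |t|\,\|\dot\eta(x_0)\|$ and substituting $s = 2\sqrt{k}\,|t|\,\|\dot\eta(x_0)\|$, direct integration yields $B_1/k$ from the variance scale and $B_2(k/n)^{4/d}$ from the squared-bias scale, the linear cross term cancelling between the two sides of $\mathcal{S}$ by an odd-symmetry argument.

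On $\mathcal{R}_2$, assumption~\textbf{(A.3)} gives $|\eta(x)-1/2| \geq 1/\ell(\bar f(x)) \geq 1/\ell(\delta_n)$, which is subpolynomially small in $n$ since $\ell \in \mathcal{L}$. Combining Hoeffding's inequality on $\hat S_n(x)-\bar\eta_k(x)$ with the bias bound for $\bar\eta_k(x)-\eta(x)$, and using the strong minimal mass part of~\textbf{(A.2)} to control the $k$th nearest-neighbour distance, one obtains a misclassification probability of order $\exp\{-c k/\ell(\delta_n)^2\}$, negligible to any polynomial order for $k \in K_\beta$. On $\mathcal{R}_3$, the misclassification probability is bounded trivially by~$1$, so the contribution is at most $P_X(\mathcal{R}_3)$. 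Condition~\textbf{(A.4)} lets me show $\mathbb{P}\{\bar{f}(X) \leq \delta_n\} \lesssim \delta_n^{\rho/(\rho+d)}$ up to subpolynomial factors, via an interpolation between the moment bound on $\|X\|$ and the $\bar f^{d/(\rho+d)}$ integrability along $\mathcal{S}$. Choosing $\delta_n \asymp (k/n)^{d/(\rho+d)}$ then balances the tail against the boundary squared-bias: in case~(i), $\rho > 4d/(d-4)$ is exactly the inequality needed to make the integral defining $B_2$ in~\eqref{eq:B1B2defRd} finite (by H\"older applied to~\textbf{(A.4)}) and to push the tail contribution to $o\bigl((k/n)^{4/d}\bigr)$; in case~(ii) the tail matches or dominates the boundary squared-bias, producing the $(k/n)^{\rho/(\rho+d)-\epsilon}$ rate, with the $\epsilon$ absorbing the $\ell$-type subpolynomial losses.

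The main obstacle is delivering the Berry--Esseen-plus-Taylor normal approximation on $\mathcal{R}_1$ with remainder uniformly smaller than $1/k + (k/n)^{4/d}$ over the whole tube and over the entire class $\mathcal{P}_{d,\theta}$: the bias and variance expansions must be carried out one order beyond naive, so that their error contributions, once integrated against weights that can blow up in the tails (like $\bar f^{-4/d}$ or $\|\dot\eta\|^{-2}$), still remain $o(\cdot)$, and the restriction $k \in K_\beta$ must be used quantitatively to suppress probabilistic remainders. A secondary technical point is the sharp bound $\mathbb{P}\{\bar f(X) \leq \delta_n\} \lesssim \delta_n^{\rho/(\rho+d)}$, which requires a careful slicing of the sublevel set $\{\bar f \leq \delta\}$ and simultaneous use of the moment condition and the density-integrability condition along $\mathcal{S}$ from~\textbf{(A.4)} to produce the correct exponent.
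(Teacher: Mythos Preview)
Your overall architecture---tube/bulk/tail, normal approximation on the tube, Hoeffding on the bulk, trivial bound on the tail---is exactly the paper's strategy (packaged there as a general Theorem on $\mathcal{R}_n \subseteq \{\bar f \geq \delta_n\}$ and then specialised). Two points, however, are not right as stated and would break the argument.

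\medskip

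\textbf{The choice of $\delta_n$.} You propose $\delta_n \asymp (k/n)^{d/(\rho+d)}$, but this is not a free parameter: the nearest-neighbour concentration and bias Taylor expansions on $\mathcal{R}_1$ and $\mathcal{R}_2$ require $\bar f(x) \gtrsim k/n$ up to subpolynomial factors (otherwise the $k$th-nearest-neighbour ball around $x$ has radius of order one and the local expansion collapses). So $\delta_n$ is essentially \emph{forced} to be $\asymp (k/n)$ times log-type factors---this is the minimal choice, and any larger choice only inflates the tail. With $\delta_n \sim k/n$ you get $P_X(\{\bar f < \delta_n\}) \lesssim (k/n)^{\rho/(\rho+d)-\epsilon}$, and the condition $\rho/(\rho+d) > 4/d$ is exactly $\rho > 4d/(d-4)$, which is case~(i). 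Your choice $\delta_n \asymp (k/n)^{d/(\rho+d)}$ gives tail $\asymp (k/n)^{\rho d/(\rho+d)^2}$, which is strictly worse and does not yield~(i); e.g.\ for $d=5$, $\rho = 21$ you would get exponent $\approx 0.16$ versus the required $0.8$.

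\medskip

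\textbf{The tube meets the tail.} Your three regions are not disjoint: $\mathcal{S}^{\epsilon_n}$ can extend into $\{\bar f < \delta_n\}$, and on that overlap the Berry--Esseen/Taylor machinery fails for exactly the reason above. The fix (as in the paper) is to do the tube analysis only over $\mathcal{S}_n := \mathcal{S} \cap \{\bar f > \delta_n\}$; this introduces a boundary discrepancy between $\mathcal{S}^{\epsilon_n} \cap \{\bar f > \delta_n\}$ and $\mathcal{S}_n^{\epsilon_n}$ which must be separately controlled by $P_X\bigl((\partial\{\bar f > \delta_n\})^{\epsilon_n} \cap \mathcal{S}^{\epsilon_n}\bigr)$, itself bounded via the same $\delta_n^{\rho/(\rho+d)}$ estimate.

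\medskip

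A minor correction: the bound $\mathbb{P}\{\bar f(X) \leq \delta\} \lesssim \delta^{\rho/(\rho+d)-\epsilon}$ comes from H\"older's inequality and the moment condition $\int \|x\|^\rho \, dP_X \leq M_0$ alone; the integrability of $\bar f^{d/(\rho+d)}$ along $\mathcal{S}$ in~\textbf{(A.4)} is not used for this, but rather to guarantee that $B_1$ is finite and (in case~(i)) that $B_2$ is finite and that $B_{1,n},B_{2,n} \to B_1,B_2$ uniformly over the class.
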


Theorem~\ref{thm:exriskRdstandard} reveals an interesting dichotomy: when $d \geq 5$ and $\rho > 4d/(d-4)$, the dominant contribution to the excess risk arises from the difficulty of classifying points close to the Bayes decision boundary $\mathcal{S}$.  In such settings, the excess risk of the standard $k$-nearest neighbour classifier converges to zero at rate $O(n^{-4/(d+4)})$ when $k$ is chosen proportional to $n^{4/(d+4)}$.  On the other hand, part~(ii) shows that when either $d \leq 4$ or $d \geq 5$ and $\rho \leq 4d/(d-4)$, the dominant contribution to the excess risk when $k$ is large may come from the challenge of classifying points in the tails of the distribution.  Indeed, Example~\ref{Eg:Counter} below provides one simple setting where this dominant contribution does come from the tails of the distribution.
\begin{example}
\label{Eg:Counter}
Suppose that the joint density of $X$ at $x = (x_1,x_2) \in (0,1) \times \mathbb{R}$ is given by $\bar{f}(x) =  2 x_1 f_2(x_2)$, where $f_2$ is a positive, twice continuously differentiable density with $f_2(x_2) = e^{-|x_2|}/2$ for $|x_2| > 1$.  Suppose also that $\eta(x) = x_1$.  Then the corresponding joint distribution $P$ belongs to $\mathcal{P}_{2,\theta}$ provided $\theta = (\epsilon_0,M_0,\rho,\ell,g)$ is such that $M_0$ is sufficiently large, $\epsilon_0 \leq \min(1/8,1/M_0)$ and $\ell$ is a sufficiently large constant ($\rho > 0$ and $g \in \mathcal{G}$ can be chosen arbitrarily).  We prove in Appendix~\ref{Sec:Lower} that for every $\beta \in (0,1/2)$ and $\epsilon > 0$,
\begin{equation}
\label{Eq:Claim}
\liminf_{n \to \infty} \inf_{k \in K_{\beta}} \biggl\{k + \Bigl(\frac{n}{k}\Bigr)^{1+\epsilon}\biggr\} \bigl\{R(\hat{C}_n^{k\mathrm{nn}}) - R(C^{\mathrm{Bayes}})\bigr\} > 0
\end{equation}
as $n \to \infty$.  Thus the rate of convergence in this example is at best $n^{-1/2}$, up to subpolynomial factors, whereas a rate of $n^{-2/3}$ is achievable over any compact set.
\end{example}

The proof of~Theorem~\ref{thm:exriskRdstandard}, and indeed the proofs of Theorems~\ref{thm:exriskRdOadapt} and~\ref{thm:exriskRdSSadapt} that follow in Section~\ref{sec:semisup} below, depend crucially on Theorem~\ref{thm:exriskRtn} in Section~\ref{sec:appendix}.  This result provides an asymptotic expansion for the excess risk of a general (local or global) $k$-nearest neighbour classifier over a region $\mathcal{R}_n \subseteq \{x \in \mathbb{R}^d:\bar{f}(x) \geq \delta_n(x)\}$, where $\delta_n(x)$, defined in~\eqref{eq:deltan} below, shrinks to zero at a rate slow enough to ensure that $X_{(k)}(x)$ concentrates around~$x$ uniformly over $\mathcal{R}_n$.  The intuition regarding the behaviour of the excess risk, then, is that when $x \in \mathcal{R}_n$ and $x$ is not close to $\mathcal{S}$, with high probability the $k$ nearest neighbours of $x$ are on the same side of $\mathcal{S}$ as $x$; i.e.~$\mathrm{sgn}\bigl(\eta(X_{(i)}) - 1/2\bigr) = \mathrm{sgn}\bigl(\eta(x) - 1/2\bigr)$ for $i = 1,\ldots,k$.  The probability of classifying $x$ differently from the Bayes classifier can therefore be shown to be $O(n^{-M})$ for every $M > 0$, using Hoeffding's inequality.  Thus, the challenging regions for classification consist of neighbourhoods of $\mathcal{S}$, where $\eta$ is close to $1/2$, together with $\mathcal{R}_n^c$, where we no longer enjoy the same nearest neighbour concentration properties.  For the first of these regions, we exploit our smoothness assumptions to derive asymptotic expansions for the bias and variance of $\hat{S}_n(x)$, uniformly over appropriate neighbourhoods of $\mathcal{S}$, and using a normal approximation, we can deduce an asymptotic expansion for the excess risk, uniformly over our classes of distributions and an appropriate set of nearest neighbour classifiers.  For $\mathcal{R}_n^c$ we are unable to bound the probability of classifying differently from the Bayes classifier with anything other than a trivial bound, but we can control $P_X(\mathcal{R}_n^c)$ using~\textbf{(A.4)}.

Finally in this section, we mention that \citet{Samworth:12a} obtained a similar expansion to that in Theorem~\ref{thm:exriskRdstandard}(i) for a fixed distribution $P$ satisfying certain smoothness conditions.  However, there the risk was computed only over a compact set, so the analysis failed to elucidate the important effects of tail behaviour on the excess risk.  Another key difference is that here we define classes $\mathcal{P}_{d,\theta}$, and show that the remainder terms in our asymptotic expansion hold \emph{uniformly} over these classes; the introduction of these classes further facilitates the study of corresponding minimax lower bounds in Section~\ref{Sec:LowerBounds} below.

\section{Local-$k$-nearest neighbour classifiers}
\label{sec:semisup}

In this section we explore the consequences of a local choice of $k$, compared with the global choice in Theorem~\ref{thm:exriskRdstandard}.  Initially, we consider an oracle choice, where $k$ is allowed to depend on the marginal feature density $\bar{f}$ (Section~\ref{Sec:Oracle}), but we then relax this to semi-supervised settings, where $\bar{f}$ can be estimated from unlabelled training data (Section~\ref{Sec:SS}).

\subsection{Oracle classifier}
\label{Sec:Oracle}
Suppose for now that the marginal density $\bar{f}$ is known.  For $\beta \in (0,1/2)$ and $B > 0$, let 
\begin{equation}
\label{eq:koracle}
k_{\mathrm{O}}(x) : = \max\Bigl[ \lceil (n-1)^\beta \rceil \, , \, \min\bigl\{\bigl\lfloor B\bigl\{\bar{f}(x)(n-1)\bigr\}^{4/(d+4)} \bigr\rfloor \, , \, \lfloor (n-1)^{1-\beta} \rfloor \bigr\}\Bigr],
\end{equation}
where the subscript O refers to the fact that this is an oracle choice of the function $k_{\mathrm{L}}$, since it depends on $\bar{f}$.  This choice aims to balance the local bias and variance of $\hat{S}_{n}(x)$.  
\begin{theorem}
\label{thm:exriskRdOadapt}
Fix $d \in \mathbb{N}$ and $\theta = (\epsilon_0,M_0,\rho,\ell,g) \in \Theta$ such that $\mathcal{P}_{d,\theta} \neq \emptyset$.  For each $0 < B_* \leq B^* < \infty$,

(i) if $\rho > 4$ then for $\beta<4d(\rho-4)/\{\rho(d+4)^2\}$,
\[
\sup_{P \in \mathcal{P}_{d,\theta}}\Bigl|R(\hat{C}_n^{k_{\mathrm{O}}\mathrm{nn}}) - R(C^{\mathrm{Bayes}}) - B_3 n^{-4/(d+4)}\Bigr| = o(n^{-4/(d+4)}),
\]
uniformly for $B \in [B_*,B^*]$ as $n \to \infty$, where 
\[
B_3 := \int_{\mathcal{S}} \frac{\bar{f}(x_0)^{d/(d+4)}}{\|\dot{\eta}(x_0)\|} \Bigl\{ \frac{1}{4B} +B^{4/d} a(x_0)^2 \Bigr\}\, d\mathrm{Vol}^{d-1}(x_0) < \infty.
\]

(ii) if $\rho \leq 4$ and $\beta < \min\{1/2,4/(d+4)\}$, then for every $\epsilon > 0$
\[
\sup_{P \in \mathcal{P}_{d,\theta}}\bigl\{R(\hat{C}_n^{k_{\mathrm{O}}\mathrm{nn}}) - R(C^{\mathrm{Bayes}})\bigr\} = o(n^{-\rho/(\rho+d) + \beta +\epsilon}),
\]
uniformly for $B \in [B_*,B^*]$, as $n \to \infty$.
\end{theorem}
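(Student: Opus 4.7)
My plan is to deduce both parts from Theorem~\ref{thm:exriskRtn}, which gives an asymptotic expansion for the excess risk of a general local-$k$-nearest-neighbour classifier on a region $\mathcal{R}_n \subseteq \{x : \bar{f}(x) \geq \delta_n(x)\}$ where nearest-neighbour concentration holds. The overall strategy is the decomposition
\[
R(\hat{C}_n^{k_{\mathrm{O}}\mathrm{nn}}) - R(C^{\mathrm{Bayes}}) = \bigl\{R_{\mathcal{R}_n}(\hat{C}_n^{k_{\mathrm{O}}\mathrm{nn}}) - R_{\mathcal{R}_n}(C^{\mathrm{Bayes}})\bigr\} + \bigl\{R_{\mathcal{R}_n^c}(\hat{C}_n^{k_{\mathrm{O}}\mathrm{nn}}) - R_{\mathcal{R}_n^c}(C^{\mathrm{Bayes}})\bigr\},
\]
the bulk term being controlled by the master expansion and the tail bounded trivially by $P_X(\mathcal{R}_n^c)$. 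Taking $\mathcal{R}_n = \{\bar{f} \geq \delta_n\}$ and using \textbf{(A.4)},
\[
P_X(\mathcal{R}_n^c) = P_X\bigl(\bar{f}(X) < \delta_n\bigr) \leq \frac{M_0}{r^{\rho}} + \delta_n a_d r^d = O\bigl(\delta_n^{\rho/(\rho+d)}\bigr)
\]
after optimising in $r$; this is the pivotal quantitative link between the moment tail and the excess risk in low-density regions.

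For part~(i), since $\rho > 4$ we may pick $\delta_n$ slowly enough to satisfy the concentration requirements of Theorem~\ref{thm:exriskRtn} yet with $\delta_n^{\rho/(\rho+d)} = o(n^{-4/(d+4)})$. The bulk contribution from the master theorem gives
\[
\int_{\mathcal{S}} \frac{\bar{f}(x_0)}{4\|\dot{\eta}(x_0)\| k_{\mathrm{O}}(x_0)}\, d\mathrm{Vol}^{d-1}(x_0) + \int_{\mathcal{S}} \frac{\bar{f}(x_0)^{1-4/d} a(x_0)^2}{\|\dot{\eta}(x_0)\|}\Bigl(\frac{k_{\mathrm{O}}(x_0)}{n}\Bigr)^{4/d} d\mathrm{Vol}^{d-1}(x_0)
\]
up to a $o(n^{-4/(d+4)})$ remainder. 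Substituting $k_{\mathrm{O}}(x_0) \asymp B\{\bar{f}(x_0)(n-1)\}^{4/(d+4)}$ and using the arithmetic identities $1 - 4/(d+4) = d/(d+4)$ and $1 - 4/d + 16/[d(d+4)] = d/(d+4)$, both integrands collapse to a constant multiple of $\bar{f}(x_0)^{d/(d+4)} n^{-4/(d+4)}$, and summing produces the leading term $B_3 n^{-4/(d+4)}$. For part~(ii), where $\rho \leq 4$, the tail contribution dominates; I would take $\delta_n$ of order $n^{-1+o(1)}$ so that $P_X(\mathcal{R}_n^c) = O(n^{-\rho/(\rho+d)+\epsilon/2})$, and observe that the bulk contribution, computed as above, is at most $O(n^{-4/(d+4)}) = o(n^{-\rho/(\rho+d)+\beta+\epsilon})$ under the stated range of $\beta$.

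The main obstacle will be dealing with the ``saturation'' regions of $\mathcal{S}$ on which the unconstrained formula for $k_{\mathrm{O}}$ falls outside $[\lceil (n-1)^\beta\rceil, \lfloor(n-1)^{1-\beta}\rfloor]$. The upper saturation is essentially vacuous on $\mathcal{S}$ because $\bar{f} \leq M_0$ there by \textbf{(A.2)}. The lower saturation occurs precisely where $\bar{f}(x_0) \lesssim n^{[\beta(d+4)-4]/4}$, and its contribution to the variance integral is controlled using H\"older's inequality together with the second inequality in \textbf{(A.4)}, namely $\int_{\mathcal{S}} \bar{f}^{d/(\rho+d)}\, d\mathrm{Vol}^{d-1} \leq M_0$. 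A careful but routine computation shows that this saturation contribution is $o(n^{-4/(d+4)})$ precisely when $\beta < 4d(\rho-4)/\{\rho(d+4)^2\}$ in part~(i), and is absorbed by the $n^{\beta+\epsilon}$ slack in part~(ii). A secondary issue is ensuring all bounds are uniform over $\mathcal{P}_{d,\theta}$ and $B \in [B_*, B^*]$; this is inherited from Theorem~\ref{thm:exriskRtn} since $k_{\mathrm{O}}$ depends on $P$ only through $\bar{f}$ and the master theorem's remainder is already uniform over both the distributions and the admissible local-$k$ functions.
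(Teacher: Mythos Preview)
Your overall strategy---decompose into a bulk region $\mathcal{R}_n$ where Theorem~\ref{thm:exriskRtn} applies and a tail region $\mathcal{R}_n^c$ controlled by $P_X(\mathcal{R}_n^c)$---is exactly the paper's approach, and your substitution $k_{\mathrm{O}}(x_0) \sim B\{\bar{f}(x_0)n\}^{4/(d+4)}$ together with the exponent arithmetic producing $B_3$ is correct. Your tail bound $P_X(\bar{f}(X)<\delta_n) \leq M_0/r^\rho + \delta_n a_d r^d = O(\delta_n^{\rho/(\rho+d)})$ is in fact slightly sharper and more elementary than the paper's, which uses H\"older's inequality and only recovers $\delta_n^{\rho(1-\tilde\alpha)/(\rho+d)}$.

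The one substantive difference is how the lower saturation of $k_{\mathrm{O}}$ is handled. You propose to let $\delta_n$ be as small as the concentration hypothesis of Theorem~\ref{thm:exriskRtn} allows, and then to bound the contribution of the saturation region $\{x_0\in\mathcal{S}_n:\bar{f}(x_0)\lesssim n^{[\beta(d+4)-4]/4}\}$ directly via \textbf{(A.4)}. The paper instead avoids saturation altogether by taking $\mathcal{R}_n=\{\bar{f}>(n-1)^{-(1-\alpha)}\}$ with $\alpha\in\bigl((1+d/4)\beta,\,d(\rho-4)/\{\rho(d+4)\}\bigr)$: the lower endpoint guarantees that the unclamped formula already lies in $K_\beta$ on $\mathcal{R}_n\cap\mathcal{S}^{\epsilon_0}$, while the upper endpoint makes the tail $o(n^{-4/(d+4)})$. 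The constraint $\beta<4d(\rho-4)/\{\rho(d+4)^2\}$ then drops out simply as the condition that this interval be non-empty. This is cleaner than your route, because once saturation is excluded the leading term and the error $\gamma_n(k_{\mathrm{O}})$ are handled just as in the proof of Theorem~\ref{thm:exriskRdstandard}; your ``careful but routine computation'' on the saturation region, while plausibly yielding the same constraint, involves controlling the bias integrand $\bar{f}^{1-4/d}a(x_0)^2$ where $\bar{f}$ may be as small as $\delta_n$, which is delicate for $d\leq 4$.

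Two smaller omissions: you should verify explicitly that $k_{\mathrm{O}}\in K_{\beta,\tau}$ for some $\tau_n\searrow 0$ (this follows from the smoothness of $\bar{f}$ on $\mathcal{S}^{\epsilon_0}$ in \textbf{(A.2)} and the monotone clipping), and you should address the boundary term $W_{n,2}$ from Theorem~\ref{thm:exriskRtn}, which is bounded by $P_X\bigl((\partial\mathcal{R}_n)^{\epsilon_n}\cap\mathcal{S}^{\epsilon_n}\bigr)$ and is controlled by the same tail estimate after noting that $(\partial\mathcal{R}_n)^{\epsilon_n}\cap\mathcal{S}^{\epsilon_n}\subseteq\{\bar{f}\leq 2\delta_n\}$.
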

Comparing Theorem~\ref{thm:exriskRdOadapt}(i) and Theorem~\ref{thm:exriskRdstandard}(i), we see that, unlike for the global $k$-nearest neighbour classifier, we can guarantee a $O(n^{-4/(d+4)})$ rate of convergence for the excess risk of the oracle classifier, both in low dimensions ($d \leq 4$), and under a weaker condition on $\rho$ when $d \geq 5$.  In particular, the condition on $\rho$ no longer depends on the dimension of the covariates.  The guarantees in Theorem~\ref{thm:exriskRdOadapt}(ii) are also stronger than those provided by Theorem~\ref{thm:exriskRdstandard}(ii) for any global choice of $k$. Examining the proof of Theorem~\ref{thm:exriskRdOadapt}, we find that the key difference with the proof of Theorem~\ref{thm:exriskRdstandard} is that we can now choose the region $\mathcal{R}_n$ (cf.~the discussion of the proof of Theorem~\ref{thm:exriskRdstandard} in Section~\ref{sec:mainstandard}) to be larger.

\subsection{The semi-supervised nearest neighbour classifier}
\label{Sec:SS}
Now consider the more realistic setting where the marginal density $\bar{f}$ of $X$ is unknown, but where we have access to an estimate $\hat{f}_m$ based on the unlabelled training set $\mathcal{T}'_{m}$.  Of course, many different techniques are available, but for simplicity, we focus here on a kernel method.  Let $K$ be a bounded kernel with $\int_{\mathbb{R}^d} K(x) \, dx = 1$, $\int_{\mathbb{R}^d} xK(x) \, dx = 0$, $\int_{\mathbb{R}^d} \|x\|^2 |K(x)| \, dx < \infty$, and let $R(K) := \int_{\mathbb{R}^d} K(x)^2 \, dx$.  We further assume that $K(x) = Q(p(x))$, where $p$ is a polynomial and $Q$ is a function of bounded variation.  Now define a kernel density estimator of $\bar{f}$, given by 
\[ 
\hat{f}_{m}(x) = \hat{f}_{m,h}(x) := \frac{1}{mh^{d}}  \sum_{j=1}^{m} K\Bigl(\frac{x - X_{n+j}}{h}\Bigr).
\]
Motivated by the oracle local choice of $k$ in~\eqref{eq:koracle}, for $\beta \in (0,1/2)$ and $B > 0$, let 
\[
k_{\mathrm{SS}}(x) : = \max\Bigl[\lceil (n-1)^\beta\rceil \, , \, \min\bigl\{\lfloor B\{\hat{f}_m(x)(n-1)\}^{4/(d+4)} \rfloor \, , \, \lfloor (n-1)^{1-\beta}\rfloor\bigr\}\Bigr].
\]
Our main result in this setting will require an additional smoothness condition on the marginal feature density $\bar{f}$ in order to ensure that $\hat{f}_m$ estimates it well.  For $d \in \mathbb{N}$, $\gamma \in (0,1]$ and $\lambda > 0$, let $\mathcal{Q}_{d,\gamma,\lambda}$ denote the class of distributions $P$ on $\mathbb{R}^d \times \{0,1\}$ whose marginal distribution $P_X$ is absolutely continuous with respect to Lebesgue measure with Radon--Nikodym derivative $\bar{f}$ satisfying $\|\bar{f}\|_\infty \leq \lambda$ and
\[
  \| \bar{f}(y)-\bar{f}(x)\| \leq \lambda \|y-x\|^{\gamma}  \quad \text{for all $x,y \in \mathbb{R}^d$.}
\]
If $\gamma \in (1,2]$, then we define $\mathcal{Q}_{d,\gamma,\lambda}$ to consist of distributions $P$ on $\mathbb{R}^d \times \{0,1\}$ whose marginal distribution $P_X$ is again absolutely continuous with Radon--Nikodym derivative $\bar{f}$ satisfying $\|\bar{f}\|_\infty \leq \lambda$, but we now ask that $\bar{f}$ be differentiable, and that 
\[
\| \dot{\bar{f}}(y)-\dot{\bar{f}}(x)\| \leq \lambda \|y-x\|^{\gamma-1}  \quad \text{for all $x,y \in \mathbb{R}^d$}.
\]
In Appendix~\ref{Sec:Example}, we show that the distribution considered in Example~\ref{eg:classexample} belongs to $\mathcal{Q}_{d,\gamma,\lambda}$ with $\gamma=2$ provided that $\lambda \geq 6 \pi^{-d/2} \Gamma(3+d/2)$.
\begin{theorem}
\label{thm:exriskRdSSadapt}
Fix $d \in \mathbb{N}$, $\theta = (\epsilon_0,M_0,\rho,\ell,g) \in \Theta$, $\gamma \in (0,2]$ and $\lambda > 0$ such that $\mathcal{P}_{d,\theta} \cap \mathcal{Q}_{d,\gamma,\lambda} \neq \emptyset$.  Let $m_0 > 0$, let $0 < A_* \leq A^* < \infty$ and $0 < B_* \leq B^* < \infty$, and let $h = h_{m} := A m^{-1/(d+2 \gamma)}$ for some $A > 0$.

(i) If $\rho > 4$ and $\beta<4d(\rho-4)/\{\rho(d+4)^2\}$, 
\[
\sup_{P \in \mathcal{P}_{d,\theta} \cap \mathcal{Q}_{d,\gamma,\lambda}}\Bigl|R(\hat{C}_n^{k_{\mathrm{SS}}\mathrm{nn}}) - R(C^{\mathrm{Bayes}}) - B_3 n^{-4/(d+4)}\Bigr| = o(n^{-4/(d+4)})
\]
uniformly for $A \in [A_*,A^*]$, $B \in [B_*,B^*]$ and $m = m_n \geq m_0 (n-1)^{2 + d/\gamma}$, where $B_3$ was defined in Theorem~\ref{thm:exriskRdOadapt}(i).

(ii) if $\rho \leq 4$ and $\beta < \min\{1/2,4/(d+4)\}$, then for every $\epsilon > 0$,
\[
\sup_{P \in \mathcal{P}_{d,\theta} \cap \mathcal{Q}_{d,\gamma,\lambda}}\bigl\{R(\hat{C}_n^{k_{\mathrm{SS}}\mathrm{nn}}) - R(C^{\mathrm{Bayes}})\bigr\} = o(n^{-\rho/(\rho+d) + \beta +\epsilon}), 
\]
uniformly for $A \in [A_*,A^*]$, $B \in [B_*,B^*]$ and $m = m_n \geq m_0 (n-1)^{2+d/\gamma}$.
\end{theorem}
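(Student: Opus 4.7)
The strategy is to couple the semi-supervised rule with the oracle rule of Theorem~\ref{thm:exriskRdOadapt} by showing that $\hat{f}_m$ estimates $\bar{f}$ uniformly at rate $o(n^{-1}\log^{1/2} n)$, which is much finer than the scale on which $k_{\mathrm{O}}(x)$ depends on $\bar{f}(x)$. Conditioning on $\mathcal{T}'_m$, the function $k_{\mathrm{SS}}$ is deterministic, so $\hat{C}_n^{k_{\mathrm{SS}}\mathrm{nn}}$ is a local-$k$-nn classifier whose excess risk can be controlled via the same underlying expansion (Theorem~\ref{thm:exriskRtn}) that drives Theorem~\ref{thm:exriskRdOadapt}.

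\textit{Step 1 (uniform KDE bound).} First I would establish that, uniformly over $\mathcal{P}_{d,\theta} \cap \mathcal{Q}_{d,\gamma,\lambda}$, $A \in [A_*,A^*]$ and $m \geq m_0(n-1)^{2+d/\gamma}$, the event $E_n := \{\sup_{x\in\mathbb{R}^d} |\hat{f}_m(x) - \bar{f}(x)| \leq \zeta_n\}$ satisfies $\mathbb{P}(E_n^{c}) = O(n^{-M})$ for every $M>0$, with $\zeta_n = C n^{-1}(\log n)^{1/2}$. For the bias, a first-order Taylor argument using $\int uK(u)\,du = 0$ together with the H\"older condition on $\bar{f}$ (or on $\dot{\bar{f}}$, when $\gamma\in(1,2]$) gives $|\mathbb{E}\hat{f}_m(x) - \bar{f}(x)| = O(h^{\gamma})$. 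For the stochastic term, the representation $K = Q\circ p$ with $p$ polynomial and $Q$ of bounded variation places $\{h^{-d}K((\cdot - x)/h) : x\in\mathbb{R}^d\}$ in a VC-subgraph class, so a Talagrand-type inequality yields $\sup_x |\hat{f}_m(x) - \mathbb{E}\hat{f}_m(x)| = O_P((\log m/(mh^d))^{1/2})$. With $h = Am^{-1/(d+2\gamma)}$ and $m \geq m_0(n-1)^{2+d/\gamma}$, both terms are $O(n^{-1}(\log n)^{1/2})$, as required.

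\textit{Step 2 (coupling $k_{\mathrm{SS}}$ with a deterministic $k_L$).} On $E_n$, I would split $\mathbb{R}^d$ into the non-clipped region where $k_{\mathrm{O}}(x) = \lfloor B\{\bar{f}(x)(n-1)\}^{4/(d+4)}\rfloor$ and the two clipped regions where $k_{\mathrm{O}}(x) \in\{\lceil (n-1)^{\beta}\rceil, \lfloor (n-1)^{1-\beta}\rfloor\}$. In the non-clipped region we have $\bar{f}(x) \gtrsim (n-1)^{\beta(d+4)/4 - 1}$, so $\zeta_n/\bar{f}(x) = o(1)$ uniformly and hence $\hat{f}_m(x)/\bar{f}(x) = 1 + \delta_n(x)$ with $\sup_x |\delta_n(x)| = o(1)$; consequently there exist deterministic $B^{\pm}_n = B(1 + o(1))$ such that the oracle-type functions $k^{\pm}(x) := \max[\lceil (n-1)^\beta\rceil,\min\{\lfloor B^{\pm}_n\{\bar{f}(x)(n-1)\}^{4/(d+4)}\rfloor,\lfloor (n-1)^{1-\beta}\rfloor\}]$ sandwich $k_{\mathrm{SS}}$ pointwise. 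Boundary cases where one of $k_{\mathrm{SS}},k^{\pm}$ is clipped while the other is not are confined to a thin level set of $\bar{f}$ whose $P_X$-mass is absorbed using (A.4).

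\textit{Step 3 (feeding into the uniform expansion).} I would then invoke Theorem~\ref{thm:exriskRtn}, which provides an asymptotic expansion for the excess risk of a local-$k$-nn classifier uniformly over a class of $k_L$ that includes both $k^{-}$ and $k^{+}$. Conditional on $E_n$, the same expansion applies to $k_{\mathrm{SS}}$ with leading constant $B_3(B^{\pm}_n) = B_3(B) + o(1)$, by continuity of $B_3$ in $B$, giving the claim of~(i); in the regime of~(ii), the clipped-region contribution and (A.4) dominate in exactly the same way as in the oracle proof. Finally, because $\mathbb{P}(E_n^{c})$ is superpolynomially small, contributions from $E_n^{c}$ are easily $o(n^{-4/(d+4)})$.

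\textit{Main obstacle.} The delicate step is Step~2: one must show that the random floor $\lfloor B\{\hat{f}_m(x)(n-1)\}^{4/(d+4)}\rfloor$ is sandwiched \emph{uniformly in $x$} by two admissible oracle-type functions whose constants differ from $B$ by $o(1)$, and that the measure-zero worry caused by floors and by the transition between clipped and non-clipped regimes contributes negligibly to the excess risk. Because $\zeta_n = o(n^{-1+\epsilon})$ is far smaller than the relevant values of $\bar{f}(x)$ in the non-clipped region (of order at least $n^{\beta(d+4)/4 - 1}$), there is substantial slack, and the sandwiching argument combined with the uniform-in-$B$ statement of Theorem~\ref{thm:exriskRdOadapt} closes the proof.
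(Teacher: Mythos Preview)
Your plan is the same as the paper's at the level of architecture: bound $\|\hat f_m-\bar f\|_\infty$ with superpolynomially small failure probability, then, conditionally on the unlabelled data, feed $k_{\mathrm{SS}}$ into the uniform expansion of Theorem~\ref{thm:exriskRtn}. The paper packages this more cleanly: instead of sandwiching by $k^\pm$, it introduces the deterministic class $\mathcal{F}_{n,\tau}=\{\tilde f:\sup_{x\in\mathcal R_n}|\bar f(x)/\tilde f(x)-1|\le\tau\}$ with $\tau=2(n-1)^{-\alpha/2}$ and $\alpha\in((1+d/4)\beta,1)$, shows via the Gin\'e--Guillou tail bound that $\mathbb{P}(\hat f_m\notin\mathcal F_{n,\tau})=O(n^{-M})$, and then checks that for \emph{every} $\tilde f\in\mathcal F_{n,\tau}$ the associated $k_{\tilde f}$ lies in $K_{\beta,\tau}$, so Theorem~\ref{thm:exriskRtn} applies uniformly over this class and yields $B_{3,n}n^{-4/(d+4)}$ exactly as in Theorem~\ref{thm:exriskRdOadapt}. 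The paper also needs only the weaker rate $\|\hat f_m-\bar f\|_\infty\le (n-1)^{-(1-\alpha/2)}$, not your $Cn^{-1}\log^{1/2}n$, though both hold under the assumptions on $m$ and $h$.

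Your Step~2 sandwich, as stated, does not do the work you assign to it. Pointwise bounds $k^-(x)\le k_{\mathrm{SS}}(x)\le k^+(x)$ do \emph{not} sandwich the excess risk, because the risk is not monotone in the number of neighbours; so you cannot read off ``leading constant $B_3(B_n^\pm)$'' from the risks of $\hat C_n^{k^\pm\mathrm{nn}}$. What actually carries Step~3 is precisely the paper's mechanism: on $E_n$ the (conditionally deterministic) function $k_{\mathrm{SS}}$ itself belongs to $K_{\beta,\tau}$, so Theorem~\ref{thm:exriskRtn} applies to it directly, and the leading integral over $\mathcal S_n$ equals $B_3\{1+o(1)\}n^{-4/(d+4)}$ because $k_{\mathrm{SS}}(x_0)/k_{\mathrm O}(x_0)\to1$ uniformly on $\mathcal S_n$. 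If you replace the sandwich by the direct verification that $k_{\mathrm{SS}}\in K_{\beta,\tau}$ on $E_n$, your argument becomes the paper's.
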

Examination of the proof of Theorem~\ref{thm:exriskRdSSadapt} reveals that the key property of our kernel estimator $\hat{f}_m$ of $\bar{f}$ is that there exists $\alpha > (1+d/4)\beta$ such that 
\begin{equation}
\label{Eq:tildefm}
\sup_{P \in \mathcal{P}_{d,\theta} \cap \mathcal{Q}_{d,\gamma,\lambda}}\mathbb{P}\biggl( \|\hat{f}_m - \bar{f} \|_{\infty}  \geq \frac{1}{(n-1)^{1-\alpha/2}} \biggr) = o(n^{-4/(d+4)}).
\end{equation}
This observation would allow similar results to Theorem~\ref{thm:exriskRdSSadapt} to be proved for other versions of the semi-supervised nearest neighbour classifier, with alternative estimators of $\bar{f}$ in the definition of $\hat{k}_{\mathrm{SS}}(\cdot)$, subject potentially to suitable modifications of the class $\mathcal{Q}_{d,\gamma,\lambda}$.  It is therefore not our intention to argue that the kernel density approach is superior to other methods of estimating the marginal density $\bar{f}$.

\section{Minimax lower bounds}
\label{Sec:LowerBounds}

Our main minimax lower bound is the following:
\begin{theorem}
  \label{Thm:LowerBound}
  Fix $d \in \mathbb{N}$, $\rho > 0$, $g \in \mathcal{G}$ with $r \mapsto r/g^{-1}(r)$ increasing for sufficiently small $r > 0$, and $\gamma \in (0,2]$.  There exist $\lambda_* > 0$, $\epsilon_* > 0$ and $M_* > 0$, depending only on $d$, such that for $\lambda \geq \lambda_*$, $M_0 \geq M_*$, $\epsilon_0 \in (0,\min(\epsilon_*,1/(4M_0))]$ and $\ell \in \mathcal{L}$ with $\ell(\delta) \geq 2/\epsilon_0$ for all $\delta \in (0,\infty)$, writing $\theta = (\epsilon_{0}, M_0, \rho, \ell, g) \in \Theta$, we can find $c = c(d, \theta,\gamma,\lambda) >0$ such that for all $n \in \mathbb{N}$ and all $\nu \geq 0$, we have
\[
\inf_{C_n} \sup_{P \in \mathcal{P}_{d,\theta} \cap \mathcal{Q}_{d,\gamma,\lambda}} \{R(C_n) - R(C^{\mathrm{Bayes}})\} \geq c \, g^{-1}(1/q)^{\frac{2d(1+\nu)}{4+d+\nu(\rho + d)}}n^{-\frac{4 + \nu \rho}{4 + d + \nu(\rho +d)}},
\]
where $q = q_n \in (1/\|g\|_\infty,\infty)$ is the unique solution to $\frac{q^{4+d+\nu(\rho + d)}}{g^{-1}(1/q)^{2}} = n$ and the infimum is taken over all measurable functions $C_n: (\mathbb{R}^d \times \{0,1\})^{\times n} \times \mathbb{R}^{d} \rightarrow \{0,1\}$.  In particular, for every $\epsilon > 0$, there exists $c = c(d, \theta,\gamma,\lambda,\epsilon) >0$ such that
\[
  \inf_{C_n} \sup_{P \in \mathcal{P}_{d,\theta} \cap \mathcal{Q}_{d,\gamma,\lambda}} \{R(C_n) - R(C^{\mathrm{Bayes}})\} \geq c \, n^{-(\min\{\frac{4}{4 + d},\frac{\rho}{\rho + d}\}+\epsilon)}.
  \]

\end{theorem}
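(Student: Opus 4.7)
The plan is to apply Assouad's lemma to a family of distributions $\{P_\sigma\}_{\sigma \in \{-1,+1\}^N}$ inside $\mathcal{P}_{d,\theta} \cap \mathcal{Q}_{d,\gamma,\lambda}$, with a free scalar parameter $\nu \geq 0$ that controls whether the hard testing problem is placed in the body or in the tails of $\bar{f}$: small $\nu$ recovers the standard smooth-classification lower bound, large $\nu$ exposes the bottleneck coming from the $\rho$-th moment constraint. Concretely, fix $\nu \geq 0$ and let $q = q_n$ be the unique solution of $q^{4+d+\nu(\rho+d)} = n\, g^{-1}(1/q)^{2}$. Set the bump length-scale $r := 1/q$, the bump height $\delta := r^{2}\, g^{-1}(r)$, and the tail radius $D := q^{\nu}$. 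Fix once and for all a smooth, radially-symmetric marginal density $\bar{f} \in \mathcal{Q}_{d,\gamma,\lambda}$ with $\bar{f}(x) \asymp (1+\|x\|)^{-(\rho+d+\epsilon)}$ at infinity for some small $\epsilon>0$ (so \textbf{(A.4)} holds), a fixed smooth bump $\phi$ supported in $B_{1}(0)$ with $\phi(0)=1$ and bounded $C^{3}$-norm, and a smooth baseline regression function $\eta_{0}$ that agrees with $1/2 + c(e_{1}\cdot x)$ on $B_{2D}(0)$ before being smoothly truncated. Pack $N \asymp (D/r)^{d-1}$ centres $\{x_{i}\}$ along $\mathcal{S}_{0} := \{\eta_{0} = 1/2\}$ at distance of order $D$ from $0$, and define
\[
\eta_{\sigma}(x) := \eta_{0}(x) + \delta \sum_{i=1}^{N} \sigma_{i}\, \phi\bigl((x-x_{i})/r\bigr), \qquad P_{\sigma} := P_{\bar{f},\eta_{\sigma}}.
\]

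The next step is to check that $P_{\sigma} \in \mathcal{P}_{d,\theta} \cap \mathcal{Q}_{d,\gamma,\lambda}$ for every $\sigma$: the Hessian bound and the modulus-of-continuity bound on $\ddot{\eta}_{\sigma}$ in \textbf{(A.3)} follow from $\|\ddot{\eta}_{\sigma}\|_{\mathrm{op}} \lesssim \delta/r^{2} = g^{-1}(r)$, where the hypothesis that $r \mapsto r/g^{-1}(r)$ is increasing is used to control the oscillation of $\ddot{\eta}_{\sigma}$ at sub-scale $r$; the lower bound $\inf_{\mathcal{S}_{\sigma}}\|\dot{\eta}_{\sigma}\| \geq \epsilon_{0}M_{0}$ is preserved because the perturbation's gradient $\delta/r \to 0$; the tail-margin bound $|\eta_{\sigma}-1/2| \geq 1/\ell(\bar{f})$ outside $\mathcal{S}_{\sigma}^{\epsilon_{0}}$ holds by taking $\ell$ large, as permitted by $\ell(\delta)\geq 2/\epsilon_{0}$; and the moment and $(d-1)$-volume integrals in \textbf{(A.4)} follow from the chosen tail of $\bar{f}$. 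For two hypotheses $\sigma$ and $\sigma'$ differing only in coordinate $i$, a Bernoulli KL computation gives $D_{\mathrm{KL}}(P_{\sigma}^{\otimes n}\|P_{\sigma'}^{\otimes n}) \lesssim n\,\bar{f}(D)\,r^{d}\,\delta^{2}$, which is of constant order by the defining equation for $q$, while the Bayes classifiers of $P_{\sigma}$ and $P_{\sigma'}$ disagree only on a lens-shaped region near $x_{i}$ of $(d-1)$-volume $\asymp r^{d-1}$ and normal thickness $\asymp \delta$ on which $|2\eta_{\sigma}-1|\asymp\delta$, contributing an excess risk of at least $c\,\bar{f}(D)\,r^{d-1}\,\delta^{2}$. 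Assouad's lemma then yields
\[
\inf_{C_{n}}\,\sup_{\sigma}\,\mathbb{E}\bigl[R(C_{n}) - R(C^{\mathrm{Bayes}}_{P_{\sigma}})\bigr] \;\gtrsim\; N\,\bar{f}(D)\,r^{d-1}\,\delta^{2}.
\]

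Substituting $N \asymp (D/r)^{d-1}$, $\bar{f}(D) \asymp D^{-(\rho+d)}$, $r = 1/q$, $\delta = r^{2}g^{-1}(r)$, $D = q^{\nu}$, and eliminating $n$ via the defining equation for $q$, the right-hand side simplifies to the bound $c\, g^{-1}(1/q)^{2d(1+\nu)/(4+d+\nu(\rho+d))}\, n^{-(4+\nu\rho)/(4+d+\nu(\rho+d))}$ claimed by the theorem. The final simplified inequality then drops out because the bound holds for \emph{every} $\nu \geq 0$: the choice $\nu = 0$ gives rate $n^{-4/(4+d)}$, letting $\nu \to \infty$ along a suitable sequence gives rate $n^{-\rho/(\rho+d)}$, and the accompanying $g^{-1}(1/q)$ factor is subpolynomial in $n$ (since $g \in \mathcal{G}$) and thus absorbed in the $n^{-\epsilon}$.

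The technical heart of the proof will be the simultaneous verification of class membership for all $P_{\sigma}$: in particular (i) the delicate modulus-of-continuity condition on $\ddot{\eta}_{\sigma}$ through $g$, which is precisely where the monotonicity hypothesis on $r/g^{-1}(r)$ enters; (ii) keeping the boundary integral $\int_{\mathcal{S}_{\sigma}} \bar{f}^{d/(\rho+d)}\, d\mathrm{Vol}^{d-1}$ in \textbf{(A.4)} uniformly bounded despite the bumps creating $N$ new $(d-1)$-dim pieces of $\mathcal{S}_{\sigma}$ deep in the tails; and (iii) maintaining the global bound $|\eta_{\sigma} - 1/2| \geq 1/\ell(\bar{f})$ on $\mathbb{R}^{d} \setminus \mathcal{S}_{\sigma}^{\epsilon_{0}}$ when the baseline slope $c$ and the perturbation height $\delta$ must be balanced against a very small value of $1/\ell(\bar{f}(x))$ at the placement radius $D$. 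Carefully choosing $\eta_{0}$, $\phi$, $\bar{f}$ and the constant $c$ so that all these constraints are simultaneously satisfied for the full range of admissible $\nu$ is the main obstacle.
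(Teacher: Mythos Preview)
Your overall strategy---a hypercube/Assouad-type argument with bump perturbations of $\eta$ placed along the Bayes boundary in a region where $\bar f$ is small, governed by a free parameter $\nu$---is exactly the right one and matches the paper's approach. But your specific construction loses a crucial factor of $q^\nu$, and as a result your claimed final simplification is incorrect.

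The gap is in the packing count. With a linear baseline $\eta_0(x)=1/2+c\,e_1\cdot x$, the set $\mathcal{S}_0$ is a \emph{single} hyperplane, and the portion at radius $D=q^\nu$ has $(d-1)$-volume $\asymp D^{d-1}$, so you can fit at most $N\asymp (D/r)^{d-1}=q^{(1+\nu)(d-1)}$ disjoint bumps. Substituting your values gives
\[
N\,\bar f(D)\,r^{d-1}\,\delta^2 \;\asymp\; q^{(1+\nu)(d-1)}\cdot q^{-\nu(\rho+d)}\cdot q^{-(d-1)}\cdot q^{-4}g^{-1}(1/q)^2
\;=\; q^{-4-\nu(\rho+1)}g^{-1}(1/q)^2,
\]
which, after eliminating $n$ via $q^{4+d+\nu(\rho+d)}=n\,g^{-1}(1/q)^2$, becomes $n^{-(4+\nu(\rho+1))/(4+d+\nu(\rho+d))}$ up to subpolynomial factors---not the $n^{-(4+\nu\rho)/(4+d+\nu(\rho+d))}$ you assert. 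Letting $\nu\to\infty$ you obtain only $n^{-(\rho+1)/(\rho+d)}$, which is strictly weaker than the $n^{-\rho/(\rho+d)}$ the theorem requires in the tail regime $\rho\le 4$.

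The paper recovers the missing $q^\nu$ by taking $\eta_0$ to be a \emph{saw-tooth} in $x_1$, so that $\mathcal{S}_0$ consists of $\asymp q^\nu$ parallel hyperplane sheets inside a box of side $\asymp q^\nu$; this yields $m\asymp q^\nu\cdot(q^\nu q)^{d-1}=q^{\nu d+d-1}$ bumps, i.e.\ an extra factor $q^\nu$ in the count. Correspondingly, the paper's $\bar f$ is not a radially decaying density but is taken to be \emph{constant} at level $w_0\asymp q^{-\nu(\rho+d)}$ on that entire box (with a fixed-mass piece near the origin to make it a probability density); this simultaneously saturates the $\rho$-moment constraint in \textbf{(A.4)} and keeps the boundary integral $\int_{\mathcal S}\bar f^{d/(\rho+d)}\,d\mathrm{Vol}^{d-1}$ uniformly bounded despite the $\asymp q^\nu$ sheets. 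Your radially decaying choice $\bar f(x)\asymp(1+\|x\|)^{-(\rho+d+\epsilon)}$ both introduces an unwanted extra $q^{-\nu\epsilon}$ and, more importantly, cannot exploit the multiple-sheet geometry. To fix your proof you must replace the linear $\eta_0$ by a periodic one with $\asymp D$ boundary components and replace the polynomial-tailed $\bar f$ by one that is uniformly low on the full $d$-dimensional box containing those components.
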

\begin{remark}
The proof of this result also reveals that the lower bound holds if the classifier is allowed to depend on some unlabelled data or even the true marginal $X$ density $\bar{f}$. 
\end{remark}
\begin{example} Consider the case where $g(\epsilon) = \exp(-1/\epsilon)$, so $g \in \mathcal{G}$.  Then for $q \in (1,\infty)$, we have $g^{-1}(1/q) = 1/\log q$, so for $n \in \mathbb{N}$,
\begin{align*}
  g^{-1}(1/q_n)^{\frac{2d(1+\nu)}{4+d+\nu(\rho + d)}}n^{-\frac{4 + \nu \rho}{4 + d + \nu(\rho +d)}} &\geq \frac{1}{\bigl\{1 + \frac{\log n}{4+d+\nu(\rho+d)}\bigr\}^{\frac{2d(1+\nu)}{4+d+\nu(\rho + d)}}}n^{-\frac{4 + \nu \rho}{4 + d + \nu(\rho +d)}}.
\end{align*}
Thus, if $\rho > 4$, then we can take $\nu = 0$ in Theorem~\ref{Thm:LowerBound} to obtain a minimax lower bound of order $n^{-4/(4+d)}/\log^2 n$; on the other hand, if $\rho \leq 4$, then we can take $\nu=\log^{1/2}n$ to obtain a minimax lower bound of order $n^{-(\frac{\rho}{\rho+d}+\epsilon)}$, for every $\epsilon > 0$.  Combining this result with Theorem~\ref{thm:exriskRdSSadapt}, we see that for every $\rho \in (0,\infty)$, our semi-supervised local-$k$-nearest neighbour classifier attains the minimax optimal rate over the class $\mathcal{P}_{d,\theta} \cap \mathcal{Q}_{d,\gamma,\lambda}$ up to polylogarithmic factors when $\rho > 4$ and up to subpolynomial factors when $\rho \leq 4$.
\end{example}

\section{Proofs}
\label{sec:appendix}
The proofs of Theorems~\ref{thm:exriskRdstandard},~\ref{thm:exriskRdOadapt} and~\ref{thm:exriskRdSSadapt} rely on the general asymptotic expansion presented in Theorem~\ref{thm:exriskRtn} below.  We begin with some further notation.  Define the $d \times n$ matrices $X^n := (X_1 \dots X_n)$ and $x^n := (x_1 \dots x_n)$.  Write
\[
 \hat{\mu}_n(x) =  \hat{\mu}_n(x,x^n) := \mathbb{E}\{\hat{S}_n(x) | X^n = x^n\} = \frac 1 {k_{\mathrm{L}}(x)} \sum_{i=1}^{k_{\mathrm{L}}(x)} \eta(x_{(i)}),
\]
and
\[
 \hat{\sigma}_n^2(x)  \!= \! \hat{\sigma}_n^2(x, x^n) \! := \!\mathrm{Var}\{\hat{S}_n(x) | X^n = x^n\}  =  \frac 1 {k_{\mathrm{L}}(x)^2}\sum_{i=1}^{k_{\mathrm{L}}(x)} \eta(x_{(i)})\{1-\eta(x_{(i)})\}.
\]
Here we have used the fact that the ordered labels $Y_{(1)}, \ldots, Y_{(n)}$ are independent given $X^n$, satisfying $\mathbb{P}(Y_{(i)} = 1 | X^{n}) = \eta(X_{(i)})$.  Since $\eta$ takes values in $[0,1]$ it is clear that $0 \leq \hat{\sigma}_n^2(x) \leq \frac 1 {4k_{\mathrm{L}}(x)}$ for all $x \in \mathbb{R}^d$.  Further, write $\mu_n(x) := \mathbb{E}\{\hat{S}_n(x)\} = \frac 1 {k_{\mathrm{L}}(x)} \sum_{i=1}^{k_{\mathrm{L}}(x)} \mathbb{E}\eta(X_{(i)})$ for the unconditional expectation of $\hat{S}_n(x)$.  Recall also that $p_{r}(x) = P_{X}\bigl(B_r(x)\bigr)$.

\subsection{A general asymptotic expansion}
Let 
\[
c_n := \sup_{x_0 \in \mathcal{S}} \ell\biggl(\frac{k_{\mathrm{L}}(x_0)}{n-1}\biggr).
\]
Further, for $x \in \mathbb{R}^d$, let
\begin{equation}
\label{eq:deltan}
\delta_n(x) = \delta_{n,\mathrm{L}}(x):= \frac{k_{\mathrm{L}}(x)}{n-1} c_n^d\log^d\Bigl(\frac{n-1}{k_\mathrm{L}(x)}\Bigr).  
\end{equation}
Recall that $\mathcal{S} = \{x \in \mathbb{R}^d : \eta(x) = 1/2\}$, and note that by Proposition~\ref{Prop:DG1} in Appendix~\ref{Sec:ManifoldIntro}, for $\epsilon >0$, we can write 
\[
\mathcal{S}^{\epsilon} = \biggl\{x_0 + t \frac{\dot{\eta}(x_0)}{\|\dot{\eta}(x_0)\|} : x_0 \in \mathcal{S}, |t| < \epsilon \biggr\}.
\]
Let
\begin{equation}
\label{Eq:epsn}
\epsilon_{n} : = \frac{1}{c_n\beta^{1/2} \log^{1/2} (n-1)},
\end{equation}
and recall the definition of the function $a(\cdot)$ in~\eqref{Eq:a}.
\begin{theorem}
\label{thm:exriskRtn}
Fix $d \in \mathbb{N}$ and $\theta = (\epsilon_0,M_0,\rho,\ell,g) \in \Theta$ such that $\mathcal{P}_{d,\theta} \neq \emptyset$.  For $n$ sufficiently large, let $\mathcal{R}_n \subseteq \bigl\{x \in \mathbb{R}^d: \bar{f}(x) \geq \delta_n(x) \bigr\}$ be a $d$-dimensional manifold.  Write $\partial \mathcal{R}_n$ for the topological boundary of $\mathcal{R}_n$, let $(\partial \mathcal{R}_n)^\epsilon := \partial \mathcal{R}_n + \epsilon\bar{B}_1(0)$, and let $\mathcal{S}_n := \mathcal{S} \cap \mathcal{R}_n$.  For $\beta \in (0,1/2)$ and $\tau > 0$ define the class of functions 
\[
K_{\beta,\tau} \equiv K_{\beta,\tau,n} := \biggl\{k_{\mathrm{L}} : \mathbb{R}^d \to K_\beta : \ \sup_{x_0 \in \mathcal{S}_n} \sup_{|t| < \epsilon_n} \biggl|\frac{k_\mathrm{L}\bigl(x_0 + t \frac{\dot{\eta}(x_0)}{\|\dot{\eta}(x_0)\|}\bigr)}{k_\mathrm{L}(x_0)} - 1\biggr| \leq \tau\biggr\}.
\]
Then for each $\beta \in (0,1/2)$ and each $\tau = \tau_n$ with $\tau_n \searrow 0$, we have 
\begin{align*}
R_{\mathcal{R}_n}(\hat{C}_n^{k_{\mathrm{L}}\mathrm{nn}}) &-  R_{\mathcal{R}_n}(C^{\mathrm{Bayes}}) 
\\& = \int_{\mathcal{S}_n} \frac{\bar{f}(x_0)}{\|\dot\eta(x_0)\|}\biggl\{\frac{1}{4k_{\mathrm{L}}(x_0)} + \Bigl(\frac{k_{\mathrm{L}}(x_0)}{n\bar{f}(x_0)}\Bigr)^{4/d}a(x_0)^2\biggr\} \, d\mathrm{Vol}^{d-1}(x_0) \\
&\hspace{100pt}   + W_{n,1} + W_{n,2}
\end{align*}
as $n \to \infty$, where $\sup_{P \in \mathcal{P}_{d,\theta}} \sup_{k_{\mathrm{L}} \in K_{\beta,\tau}} |W_{n,1}|/\gamma_n(k_{\mathrm{L}}) \rightarrow 0$ with
\begin{align*}
\gamma_n(k_{\mathrm{L}}) &:= \int_{\mathcal{S}_n} \frac{\bar{f}(x_0)}{\|\dot\eta(x_0)\|}\biggl\{\frac{1}{4k_{\mathrm{L}}(x_0)} + \Bigl(\frac{k_{\mathrm{L}}(x_0)}{n\bar{f}(x_0)}\Bigr)^{4/d}\ell\bigl(\bar{f}(x_0)\bigr)^2\biggr\} \, d\mathrm{Vol}^{d-1}(x_0),
\end{align*}
and where $\limsup_{n \rightarrow \infty} \sup_{P \in \mathcal{P}_{d,\theta}} \sup_{k_{\mathrm{L}} \in K_{\beta,\tau}} |W_{n,2}|/P_X\bigl((\partial\mathcal{R}_{n})^{\epsilon_n} \cap \mathcal{S}^{\epsilon_n} \bigr) \leq 1$.
\end{theorem}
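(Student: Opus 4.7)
The plan is to decompose $\mathcal{R}_n$ into the tubular neighbourhood $\mathcal{R}_n \cap \mathcal{S}^{\epsilon_n}$ of the Bayes decision boundary (where the main contribution lives), a boundary strip $\mathcal{R}_n \cap (\partial\mathcal{R}_n)^{\epsilon_n} \cap \mathcal{S}^{\epsilon_n}$ (absorbed into $W_{n,2}$), and the ``far'' region $\mathcal{R}_n \setminus \mathcal{S}^{\epsilon_n}$ (where misclassification is super-polynomially rare, absorbed into $W_{n,1}$). Throughout, one uses the identity
\[
  R_{\mathcal{R}_n}(\hat{C}_n^{k_{\mathrm{L}}\mathrm{nn}}) - R_{\mathcal{R}_n}(C^{\mathrm{Bayes}}) = \int_{\mathcal{R}_n} |2\eta(x)-1|\,\mathbb{P}\bigl(\hat{C}_n^{k_{\mathrm{L}}\mathrm{nn}}(x) \neq C^{\mathrm{Bayes}}(x)\bigr)\bar{f}(x)\, dx.
\]

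First, I would establish uniform concentration of the $k_{\mathrm{L}}(x)$-th nearest-neighbour distance: writing $r^*(x) := \bigl(k_{\mathrm{L}}(x)/(na_d\bar{f}(x))\bigr)^{1/d}$, the constraint $\bar{f}(x) \geq \delta_n(x)$ together with the strong minimal mass part of \textbf{(A.2)} allows a Chernoff/Hoeffding argument on $\sum_i \mathbbm{1}_{\{X_i \in B_{cr^*(x)}(x)\}}$ to show that $\|X_{(k_{\mathrm{L}}(x))}(x) - x\| = r^*(x)(1+o(1))$ with probability $1 - O(n^{-M})$ for every $M > 0$, uniformly over $\mathcal{P}_{d,\theta}$ and $x \in \mathcal{R}_n$. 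On $\mathcal{R}_n \setminus \mathcal{S}^{\epsilon_n}$, combining this with the margin lower bound $|\eta(x) - 1/2| \geq 1/\ell(\bar{f}(x))$ from \textbf{(A.3)} and a conditional Hoeffding bound on $\hat{S}_n(x) \mid X^n$ yields a super-polynomially small conditional misclassification probability, contributing to $W_{n,1}$.

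Next, on $\mathcal{R}_n \cap \mathcal{S}^{\epsilon_n}$, I would use the tubular-neighbourhood parameterisation $\Psi(x_0,t) = x_0 + t\dot{\eta}(x_0)/\|\dot{\eta}(x_0)\|$ from Proposition~\ref{Prop:DG1} of the supplement to change variables to $(x_0,t) \in \mathcal{S}_n \times (-\epsilon_n, \epsilon_n)$, whose Jacobian differs from $1$ by $O(\epsilon_n)$. The key technical task is to derive uniform asymptotic expansions
\[
  \hat{\mu}_n(x) = \eta(x) + a(x)\Bigl(\frac{k_{\mathrm{L}}(x)}{n\bar{f}(x)}\Bigr)^{2/d}\bigl(1+o_{\mathbb{P}}(1)\bigr), \qquad \hat{\sigma}_n^2(x) = \frac{1}{4k_{\mathrm{L}}(x)}\bigl(1+o_{\mathbb{P}}(1)\bigr),
\]
obtained by Taylor-expanding $\eta$ about $x$ inside $B_{r^*(x)}(x)$, averaging against the empirical distribution of the neighbours (itself further concentrated around the uniform law on $B_{r^*(x)}(x)$), and using the regularity bounds on $\dot{\bar{f}}, \ddot{\bar{f}}, \dot{\eta}, \ddot{\eta}$ from \textbf{(A.2)}--\textbf{(A.3)}; the coefficient $a(x)$ in~\eqref{Eq:a} arises from combining the radial second moment of the uniform law on $B_1(0)$ with the leading local Taylor coefficients. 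A Berry--Esseen bound applied to the conditionally-independent Bernoulli sum $\hat{S}_n(x) \mid X^n$ then gives $\mathbb{P}(\hat{S}_n(x) \geq 1/2 \mid X^n) = \Phi\bigl((\hat{\mu}_n(x) - 1/2)/\hat{\sigma}_n(x)\bigr) + O(k_{\mathrm{L}}(x)^{-1/2})$.

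Finally, using $\eta(\Psi(x_0, t)) - 1/2 = t\|\dot{\eta}(x_0)\| + O(t^2)$ together with the $K_{\beta,\tau}$-regularity $k_{\mathrm{L}}(\Psi(x_0,t)) = k_{\mathrm{L}}(x_0)(1 + o(1))$ on $|t| < \epsilon_n$, and with $\bar{f}(\Psi(x_0,t)) = \bar{f}(x_0) + O(\epsilon_n \ell(\bar{f}(x_0)))$, the inner $t$-integral becomes a standard Gaussian moment. Its evaluation, via identities such as $\int_{\mathbb{R}} s_+ \phi(s)\, ds = (2\pi)^{-1/2}$ and $\int_{\mathbb{R}} s^2 \phi(s)\, ds = 1$, yields exactly $\|\dot{\eta}(x_0)\|^{-1}\bar{f}(x_0)\{1/(4k_{\mathrm{L}}(x_0)) + (k_{\mathrm{L}}(x_0)/(n\bar{f}(x_0)))^{4/d}a(x_0)^2\}$. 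The boundary strip $\mathcal{R}_n \cap (\partial\mathcal{R}_n)^{\epsilon_n} \cap \mathcal{S}^{\epsilon_n}$, where the parameterisation from $\mathcal{S}_n$ overshoots $\mathcal{R}_n$, contributes $W_{n,2}$ via the trivial bound $|2\eta(x)-1|\mathbb{P}(\cdot) \leq 1$. The hardest step is making all these remainders uniform over $\mathcal{P}_{d,\theta}$ and $K_{\beta,\tau}$: controlling the bias-expansion remainder in the tails (where Hessians are bounded only by $\bar{f}\cdot\ell(\bar{f})$ rather than a constant), ensuring the Berry--Esseen and Jacobian errors are dominated by $\gamma_n(k_{\mathrm{L}})$ (whose integrand contains the $\ell(\bar{f})^2$ factor precisely to absorb these), and verifying that truncating the $t$-integral at $|t| = \epsilon_n$ rather than $\infty$ costs only negligibly, since $\epsilon_n\sqrt{k_{\mathrm{L}}} \to \infty$ uniformly.
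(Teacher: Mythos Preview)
Your decomposition, tubular-neighbourhood parametrisation, bias/variance expansions and Gaussian inner integral are all exactly the route the paper takes (its Steps~1,~2,~4,~5,~6). Two specific technical points in your sketch would not close, however, and they are precisely the places where the paper has to work hardest.

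First, the uniform Berry--Esseen error $O(k_{\mathrm{L}}(x)^{-1/2})$ you quote is too crude. After multiplying by $|t|$ and integrating over $|t|<\epsilon_n$ it produces a remainder of order $\epsilon_n^2 k_{\mathrm{L}}^{-1/2}$; since $\epsilon_n \sim 1/\sqrt{\log n}$ while $k_{\mathrm{L}} \geq (n-1)^\beta$, this is of order $k_{\mathrm{L}}^{1/2}/\log n$ times the target $1/k_{\mathrm{L}}$ and hence is \emph{not} $o(\gamma_n(k_{\mathrm{L}}))$. The paper instead invokes a \emph{non-uniform} Berry--Esseen bound carrying the factor $\{1+|\hat{\theta}(x_0^t)|^3\}^{-1}$, splits the $t$-range at $t_n \asymp \max\{k_{\mathrm{L}}^{-1/2},(k_{\mathrm{L}}/(n\bar{f}))^{2/d}\ell(\bar{f})\}$, and uses that for $|t|>t_n$ one has $|\hat{\theta}(x_0^t)| \gtrsim k_{\mathrm{L}}^{1/2}|t|$ with high probability; only then does the integrated Berry--Esseen error become $o(1/k_{\mathrm{L}})$.

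Second, your passage from $\Phi\bigl((\hat\mu_n-1/2)/\hat\sigma_n\bigr)$ to the deterministic $\Phi(\bar\theta(x_0,t))$ hides a step you do not address: controlling the random fluctuation $\hat\mu_n(x,X^n)-\mu_n(x)$. Your claimed $o_{\mathbb{P}}(1)$ in the $\hat\mu_n$ expansion is not enough; one needs a quantitative rate, and the paper obtains it via the Efron--Stein inequality, giving $\mathrm{Var}\{\hat\mu_n(x,X^n)\}=O\bigl(k_{\mathrm{L}}^{-1}(k_{\mathrm{L}}/(n\bar f))^{2/d}\bigr)$ (its Step~3). This variance bound is then fed into a Chebyshev argument to show $|\hat\theta-\theta|$ is small outside an event of controllable probability, which is what makes the $\mathbb{E}\Phi(\hat\theta)\to\Phi(\bar\theta)$ replacement (the paper's $R_6$ term) go through. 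Without both of these ingredients your remainder analysis would stall; with them, your outline coincides with the paper's proof.
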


\begin{proof}[Proof of Theorem~\ref{thm:exriskRtn}]
First observe that
\begin{align}
\label{eq:exriskint}
R_{\mathcal{R}_{n}}&(\hat{C}_n^{k_{\mathrm{L}}\mathrm{nn}}) - R_{\mathcal{R}_{n}}(C^{\mathrm{Bayes}}) \nonumber \\
& = \int_{\mathcal{R}_{n}} \bigl[ \mathbb{P}\{\hat{S}_n(x) < 1/2\} - \mathbbm{1}_{\{\eta(x) < 1/2\}}\bigr] \{2\eta(x) - 1\} \bar{f}(x) \, dx.
\end{align} 
The proof is presented in seven steps.   We will see that the dominant contribution to the integral in~\eqref{eq:exriskint} arises from a small neighbourhood about the Bayes decision boundary, i.e.\ the region $\mathcal{S}^{\epsilon_n} \cap \mathcal{R}_n$.  On $\mathcal{R}_{n}\setminus\mathcal{S}^{\epsilon_n}$, the $k_{\mathrm{L}}$nn classifier agrees with the Bayes classifier with high probability (asymptotically).  More precisely, we show in Step~4 that 
\[
\sup_{P \in \mathcal{P}_{d,\theta}} \sup_{k_{\mathrm{L}} \in K_{\beta,\tau}} \sup_{x \in \mathcal{R}_{n}\setminus\mathcal{S}^{\epsilon_n}}| \mathbb{P}\{\hat{S}_n(x) < 1/2\} - \mathbbm{1}_{\{\eta(x) < 1/2\}}| = O(n^{-M}),
\]
for each $M>0$, as $n \to \infty$.  In Steps~1,~2  and~3, we derive the key asymptotic properties of the bias, conditional (on $X^n$) bias and variance of $\hat{S}_n(x)$ respectively.  In Step 5 we show that the integral over $\mathcal{S}^{\epsilon_n} \cap \mathcal{R}_n$ can be decomposed into an integral over $\mathcal{S}_n$ and one perpendicular to $\mathcal{S}$.  Step~6 is dedicated to combining the results of Steps 1 - 5; we derive the leading order terms in the asymptotic expansion of the integral in~\eqref{eq:exriskint}.  Finally, we bound the remaining error terms to conclude the proof in Step 7, which is presented in Appendix~\ref{sec:step7}. To ease notation, where it is clear from the context, we write $k_{\mathrm{L}}$ in place of $k_{\mathrm{L}}(x)$. 

\bigskip

\textbf{Step 1}: Let $\mu_n(x) := \mathbb{E}\{\hat{S}_n(x)\}$, and for $x_0 \in \mathcal{S}$ and $t \in \mathbb{R}$, write $x = x(x_0,t) := x_0 + t\frac{\dot{\eta}(x_0)}{\|\dot{\eta}(x_0)\|}$.  We show that
\[
\mu_n(x) - \eta(x) - \Big(\frac{k_{\mathrm{L}}(x)}{n\bar{f}(x)}\Big)^{2/d} a(x) = o\biggl(\Big(\frac{k_{\mathrm{L}}(x_0)}{n \bar{f}(x_0)}\Big)^{2/d}\ell\bigl(\bar{f}(x_0)\bigr)\biggr),
\]
uniformly for $P \in \mathcal{P}_{d,\theta}$, $k_{\mathrm{L}} \in K_{\beta,\tau}$, $x_0 \in  \mathcal{S}_n$ and $|t| < \epsilon_n$.  Write
\begin{align*}
&\mu_n(x) - \eta(x) = \frac{1}{k_{\mathrm{L}}(x)}\sum_{i=1}^{k_{\mathrm{L}}(x)}\mathbb{E}\{\eta(X_{(i)}) - \eta(x)\}
\\ & =  \frac{1}{k_{\mathrm{L}}(x)}\sum_{i=1}^{k_{\mathrm{L}}(x)} \mathbb{E}\{(X_{(i)} - x)^T\dot{\eta}(x)\}  + \frac{1}{2} \mathbb{E}\{(X_{(i)} - x)^T\ddot{\eta}(x)(X_{(i)} - x)\} + R_{1}, 
\end{align*}
where we show in Step 7 that 
\begin{equation}
\label{Eq:R1}
|R_{1}| = o\biggl\{\biggl(\frac{k_{\mathrm{L}}(x_0)}{n\bar{f}(x_0)}\biggr)^{2/d}\biggr\}
\end{equation}
uniformly for $P \in \mathcal{P}_{d,\theta}$, $k_{\mathrm{L}} \in K_{\beta,\tau}$, $x_0 \in \mathcal{S}_n$ and $|t| < \epsilon_n$. 

The density of $X_{(i)} - x$ at $u\in \mathbb{R}^d$ is given by
\begin{equation}
\label{Eq:fi}
f_{(i)}(u) := n\bar{f}(x+u) \binom{n-1}{i-1} p_{\|u\|}^{i-1}(1-p_{\|u\|})^{n-i} = n\bar{f}(x+u)p_{\|u\|}^{n-1}(i-1),
\end{equation}
where $p_{\|u\|}=p_{\|u\|}(x)$ and $p_{\|u\|}^{n-1}(i-1)$ denotes the probability that a $\mathrm{Bin}(n-1, p_{\|u\|})$ random variable equals $i-1$.  Now let 
\begin{equation}
\label{eq:rn}
r_{n} = r_n(x) := \biggl\{\frac{2k_{\mathrm{L}}(x)}{(n-1)\bar{f}(x)a_d}\biggr\}^{1/d}.
\end{equation}
We show in Step 7 that
\begin{equation}
\label{eq:R2}
R_2 := \sup_{P \in \mathcal{P}_{d,\theta}} \sup_{k_{\mathrm{L}} \in K_{\beta,\tau}}\sup_{x_0 \in  \mathcal{S}_n} \sup_{|t| < \epsilon_n} \mathbb{E}\{\|X_{(k_{\mathrm{L}})} - x\|^2 \mathbbm{1}_{\{\|X_{(k_{\mathrm{L}})} - x\| \geq r_{n}\}}\} = O(n^{-M}),
\end{equation}
for each $M>0$, as $n \to \infty$.  It follows from~\eqref{Eq:fi} and~\eqref{eq:R2}, together with the upper bound on $\sup_{x \in \mathcal{S}^{2\epsilon_0}} \|\dot{\eta}(x)\|$ in \textbf{(A.3)} that
\[
\mathbb{E}\{(X_{(i)} - x)^T\dot{\eta}(x)\} = \! \int_{B_{r_n}(0)}\!\!\!\! \!\!\!\!\! \dot{\eta}(x)^T u n \{\bar{f}(x+u) - \bar{f}(x)\} p_{\|u\|}^{n-1}(i-1) \, du  +  O(n^{-M}),
\]
uniformly for $P \in \mathcal{P}_{d,\theta}$, $k_{\mathrm{L}} \in K_{\beta,\tau}$, $i \in \{1,\ldots,k_{\mathrm{L}}\}$, $x_0 \in  \mathcal{S}_n$ and $|t| < \epsilon_n$.  Similarly, using the upper bound on $\sup_{x \in \mathcal{S}^{2\epsilon_0}}\|\ddot{\eta}(x)\|_{\mathrm{op}}$ in \textbf{(A.3)},
\[
\mathbb{E}\{(X_{(i)} - x)^T\ddot{\eta}(x)(X_{(i)} - x)\} =\! \int_{B_{r_n}(0)} \!\!\! \! \! \! \! \! u^T\ddot{\eta}(x) u n \bar{f}(x+u)p_{\|u\|}^{n-1}(i-1) \, du + O(n^{-M}),
\]
uniformly for $P \in \mathcal{P}_{d,\theta}$, $k_{\mathrm{L}} \in K_{\beta,\tau}$, $i \in \{1,\ldots,k_{\mathrm{L}}\}$, $x_0 \in  \mathcal{S}_n$ and $|t| < \epsilon_n$.  Hence, summing over $i$, we see that
\begin{align*}
& \frac{1}{k_{\mathrm{L}}}\sum_{i=1}^{k_{\mathrm{L}}} \mathbb{E}\{(X_{(i)} - x)^T\dot{\eta}(x)\}  + \frac{1}{2k_{\mathrm{L}}}\sum_{i=1}^{k_{\mathrm{L}}} \mathbb{E}\{(X_{(i)} - x)^T\ddot{\eta}(x)(X_{(i)} - x)\}
\\ &  = \int_{B_{r_n}(0)} \Bigl[ \dot{\eta}(x)^T u n \{\bar{f}(x+u) - \bar{f}(x)\}  +  \frac{1}{2} u^T\ddot{\eta}(x) u n \bar{f}(x+u)\Bigr] q_{\|u\|}^{n-1}(k_{\mathrm{L}}) \, du 
\\ & \hspace{300pt} + O(n^{-M}),
\end{align*}
uniformly for $P \in \mathcal{P}_{d,\theta}$, $k_{\mathrm{L}} \in K_{\beta,\tau}$, $i \in \{1,\ldots,k_{\mathrm{L}}\}$, $x_0 \in  \mathcal{S}_n$ and $|t| < \epsilon_n$, where  $q_{\|u\|}^{n-1}(k_{\mathrm{L}})$ denotes the probability that a $\mathrm{Bin}(n-1, p_{\|u\|})$ random variable is less than~$k_{\mathrm{L}}$.  Let $n_0 \in \mathbb{N}$ be large enough that 
\[
\epsilon_n + \sup_{x_0 \in \mathcal{S}_n} \sup_{|t| < \epsilon_n} r_n(x) < \epsilon_0
\] 
for $n \geq n_0$.  That this is possible follows from the fact that, for $\epsilon_n < \epsilon_0$,
\begin{align}
\label{eq:kfbar}
  \sup_{P \in \mathcal{P}_{d,\theta}}&\sup_{k_{\mathrm{L}} \in K_{\beta,\tau}} \sup_{x_0 \in \mathcal{S}_n} \sup_{|t| < \epsilon_n} \max \Bigl\{ \Bigl| \frac{k_\mathrm{L}(x)}{k_\mathrm{L}(x_0)} -1 \Bigr| , \Bigl| \frac{\bar{f}(x)}{\bar{f}(x_0)} -1 \Bigr| \Bigr\} \nonumber \\
                                     & \leq \sup_{P \in \mathcal{P}_{d,\theta}} \max\Bigl\{ \tau , c_n \epsilon_n + \frac{c_n \epsilon_n^2}{2}\Bigr\} \nonumber \\
  &\leq  \max\biggl\{ \tau , \frac{1}{\beta^{1/2}\log^{1/2}(n-1)} + \frac{1}{2\beta \log(n-1)}\biggr\} \rightarrow 0.
\end{align}
By a Taylor expansion of $\bar{f}$ and assumption \textbf{(A.2)}, for all $x_0 \in \mathcal{S}_n$, $|t| < \epsilon_n$, $\|u\| < r_{n}$ and $n \geq n_0$, 
\[
\Bigl| \bar{f}(x+u) - \bar{f}(x) - u^T\dot{\bar{f}}(x) \Bigr| \leq \frac{\|u\|^2}{2} \!\!\sup_{s \in B_{\|u\|}(0)} \| \ddot{\bar{f}}(x+s)\|_{\mathrm{op}} \leq \frac{\|u\|^{2}}{2} \bar{f}(x_0) \ell\bigl(\bar{f}(x_0)\bigr).
\]
Hence, for $x_0 \in \mathcal{S}_n$, $|t| < \epsilon_n$, $r < r_n$ and $n \geq n_0$,
\begin{align}
\label{eq:smallball1}
| p_{r}(x) &- \bar{f}(x) a_d r^{d} |  \leq \int_{B_{r}(0)} |\bar{f}(x+u) - \bar{f}(x) - u^{T}\dot{\bar{f}}(x) | \, du \nonumber
\\ & \leq \frac{1}{2} \bar{f}(x_0)  \ell\bigl(\bar{f}(x_0)\bigr)\int_{B_{r}(0) } \|u\|^{2} \, du =  \frac{da_d}{2(d+2)} \bar{f}(x_0) \ell\bigl(\bar{f}(x_0)\bigr)r^{d+2}. 
\end{align}
Now, for $v \in B_1(0)$, $x_0 \in \mathcal{S}_n$, $|t| < \epsilon_n$ and $n \geq n_0$,
\begin{align*}
k_{\mathrm{L}}(x) - (n-1)p_{\|v\|r_n} & = k_{\mathrm{L}}(x) - (n-1)\bar{f}(x)a_d \|v\|^dr_n^d + R_3
\\&  = k_{\mathrm{L}}(x) (1-2\|v\|^d) + R_3,
\end{align*} 
where
\begin{align*}
|R_3| &\leq  \frac{d a_{d}(n-1) \bar{f}(x_0) \ell\bigl(\bar{f}(x_0)\bigr) \|v\|^{d+2} r_n^{d+2}}{2(d+2)}
\\ & \leq \frac{ 2^{2/d} d k_{\mathrm{L}}(x)}{a_d^{2/d} (d+2) \log^{2}\bigl(\frac{n-1}{k_{\mathrm{L}}(x_0)}\bigr)} \Bigl( \frac{\bar{f}(x_0)}{\bar{f}(x)} \Bigr)^{1+2/d} \Bigl( \frac{k_\mathrm{L}(x)}{k_\mathrm{L}(x_0)} \Bigr)^{2/d}. 
\end{align*} 
It follows from~\eqref{eq:kfbar} that there exists $n_1 \in \mathbb{N}$ such that, for all $x_0 \in \mathcal{S}_n$, $|t| < \epsilon_n$, $\|v\|^d \in (0,1/2 -1/\log{((n-1)/k_{\mathrm{L}}(x_0))}]$ and $n \geq n_1$,
\[
k_{\mathrm{L}}(x) - (n-1)p_{\|v\|r_n}  \geq \frac{k_{\mathrm{L}}(x)}{\log((n-1)/k_{\mathrm{L}}(x_0))},
\]
Similarly, for all $\|v\|^d \in [1/2 + 1/\log((n-1)/k_{\mathrm{L}}(x_0)),1)$ and $n \geq n_1$,
\[
(n-1)p_{\|v\|r_n} - k_{\mathrm{L}}(x) \geq \frac{k_{\mathrm{L}}(x)}{\log((n-1)/k_{\mathrm{L}}(x_0))}.
\]
Hence, by Bernstein's inequality, we have that for each $M > 0$,
\[
\sup_{P \in \mathcal{P}_{d,\theta}} \sup_{k_{\mathrm{L}} \in K_{\beta,\tau}} \sup_{x_0 \in \mathcal{S}_n} \sup_{|t| < \epsilon_n} \sup_{\|v\|^d \in \bigl(0, \frac{1}{2}-\frac{1}{\log((n-1)/k_{\mathrm{L}}(x_0))}\bigr]} \! \! \! \! \! \! 1 - q_{\|v\|r_n}^{n-1} (k_{\mathrm{L}}(x)) = O(n^{-M}),
\]
and
\begin{equation}
\label{eq:binbound2}
\sup_{P \in \mathcal{P}_{d,\theta}} \sup_{k_{\mathrm{L}} \in K_{\beta,\tau}}\sup_{x_0 \in \mathcal{S}_n} \sup_{|t| < \epsilon_n}\sup_{\|v\|^d \in \bigl[\frac{1}{2}-\frac{1}{\log((n-1)/k_{\mathrm{L}}(x_0))}, 1\bigr)} q_{\|v\|r_n}^{n-1} (k_{\mathrm{L}}(x)) = O(n^{-M}).
\end{equation}
We conclude that
\begin{align}
\label{eq:biasexpd}
\frac{1}{k_{\mathrm{L}}(x)} & \int_{B_{r_n}(0)} \Bigl[\dot{\eta}(x)^T u n \{\bar{f}(x+u) - \bar{f}(x)\}  \nonumber
 \\ & \hspace{120 pt} +  \frac{1}{2}u^T\ddot{\eta}(x) u n \bar{f}(x+u)\Bigr] q_{\|u\|}^{n-1}(k_{\mathrm{L}}(x)) \, du \nonumber
\\ & = \frac{1}{k_{\mathrm{L}}(x)} \int_{B_{2^{-1/d}r_n}(0)} \Bigl[\dot{\eta}(x)^T u n \{\bar{f}(x+u) - \bar{f}(x)\}  \nonumber
\\ & \hspace{120 pt} + \frac{1}{2} u^T\ddot{\eta}(x) u n \bar{f}(x+u)\Bigr] \, du + R_{41} \nonumber
\\ & = \Big(\frac{k_{\mathrm{L}}(x)}{n}\Big)^{2/d} \frac{\sum_{j=1}^{d} \{\eta_j(x)\bar{f}_j(x)  +  \frac{1}{2} \eta_{jj}(x) \bar{f}(x) \}}{(d+2) a_d^{2/d} \bar{f}(x)^{1+2/d}} + R_{41} + R_{42} \nonumber
\\ &  = \Bigl(\frac{k_{\mathrm{L}}(x)}{n\bar{f}(x)}\Bigr)^{2/d} a(x) + R_{41} + R_{42},
\end{align}
where 
\[
|R_{41}| + |R_{42}| = o\biggl(\Bigl(\frac{k_{\mathrm{L}}(x_0)}{n\bar{f}(x_0)}\Bigr)^{2/d}\ell\bigl(\bar{f}(x_0)\bigr)\biggr),
\]
uniformly for $P \in \mathcal{P}_{d,\theta}$, $k_{\mathrm{L}} \in K_{\beta,\tau}$, $x_0 \in  \mathcal{S}_n$ and $|t| < \epsilon_n$.

\bigskip

\textbf{Step 2}: Recall that $\hat{\sigma}_n^2(x,x^n) = \mathrm{Var}\{\hat{S}_n(x)| X^n = x^n\}$.  We show that 
\begin{equation}
\label{eq:varexp}
 \Big| \hat\sigma^2_n(x, X^n) - \frac{1}{4k_{\mathrm{L}}} \Big| = o_p(1/k_{\mathrm{L}}),
\end{equation}
uniformly for $P \in \mathcal{P}_{d,\theta}$, $k_{\mathrm{L}} \in K_{\beta,\tau}$, $x_0 \in  \mathcal{S}_n$ and $|t| < \epsilon_n$.  Recall that 
\[
\hat{\sigma}_n^2(x,X^n) = \frac 1 {k_{\mathrm{L}}^2} \sum_{i=1}^{k_{\mathrm{L}}} \eta(X_{(i)})\{1-\eta(X_{(i)})\}.
\]
Let $n_2 \in \mathbb{N}$ be large enough that $1- c_n \epsilon_n - \frac{d+1}{d+2}c_n\epsilon_n^2 \geq \epsilon_0$ for $n \geq n_2$.  Then for $n \geq \max\{n_0, n_2\}$, $P \in \mathcal{P}_{d,\theta}$, $r < \epsilon_n$, $x_0 \in  \mathcal{S}_n$ and $|t| < \epsilon_n$, we have by \textbf{(A.2)} and a very similar argument to that in~\eqref{eq:smallball1} that 
\begin{equation}
\label{eq:plowerbound}
p_{r}(x) \geq \epsilon_{0} a_d r^{d} \bar{f}(x_0) \geq \epsilon_{0} a_d r^{d} \delta_n(x_0).
\end{equation}
Now suppose that $z_{1}, \dots, z_{N} \in \mathcal{R}_{n} \cup \mathcal{S}_n^{\epsilon_n}$ are such that $\|z_{j} - z_{\ell}\| \geq \epsilon_{n}/6$ for all $j \neq \ell$,  but $\sup_{x \in \mathcal{R}_{n} \cup \mathcal{S}_n^{\epsilon_n}} \min_{j=1, \dots, N} \|x - z_{j}\| < \epsilon_{n}/6$.  We have by \textbf{(A.2)} that
\[
1 = P_{X}(\mathbb{R}^{d}) \geq  \sum_{j=1}^{N} p_{\epsilon_{n}/12}(z_{j}) \geq \frac{N \epsilon_{0} a_d \beta^{d/2}\log^{d/2}(n-1)}{12^d(n-1)^{1-\beta}}.
\]
For each $j=1,\ldots,N$, choose 
\[
z_j' \in \argmax_{z \in B_{z_j}(\epsilon_n/6) \cap (\mathcal{R}_n \cup \mathcal{S}_n^{\epsilon_n})} k_{\mathrm{L}}(z).
\]
Now, given $x \in \mathcal{R}_{n} \cup \mathcal{S}_n^{\epsilon_n}$, let $j_{0} := \argmin_j \| x - z_{j}\|$, so that $B_{\epsilon_{n}/6}(z_{j_0}') \subseteq B_{\epsilon_{n}/2}(x)$. Thus, if there are at least $k_\mathrm{L}(z_j')$ points among $\{x_1,\ldots,x_n\}$ inside each of the balls $B_{\epsilon_{n}/6}(z_{j}')$, then for every $x \in \mathcal{R}_{n} \cup \mathcal{S}_n^{\epsilon_n}$ there are at least $k_\mathrm{L}(x)$ of them in $B_{\epsilon_n/2}(x)$.  Moreover by~\eqref{eq:kfbar}, \eqref{eq:plowerbound} and \textbf{(A.2)},
\[
\min_{j=1, \ldots, N} \Bigl\{ np_{\epsilon_n /6}(z_j') - 2k_{\mathrm{L}}(z_j') \Bigr\} \geq  (n-1)^\beta 
\]
for all $P \in \mathcal{P}_{d,\theta}$, $k_{\mathrm{L}} \in K_{\beta,\tau}$ and $n \geq n_3$, say.  Define $A_{k_{\mathrm{L}}} := \bigl\{\|X_{(k_{\mathrm{L}})}(x) - x\|  < \epsilon_{n}/2 \ \mbox{for all} \ x \in \mathcal{R}_{n} \cup \mathcal{S}_n^{\epsilon_n}\bigr\}$.  Then by a standard binomial tail bound \citep[][Equation~(6), p.~440]{Shorack:86}, for $n \geq n_3$ and any $M > 0$, 
\begin{align}
\label{Eq:CompProb}
\mathbb{P}(A_{k_{\mathrm{L}}}^c) & = \mathbb{P}\Bigl\{ \sup_{x \in \mathcal{R}_{n} \cup \mathcal{S}_n^{\epsilon_n}} \|X_{(k_{\mathrm{L}}(x))}(x) - x\| \geq \epsilon_{n}/2\Bigr\} \nonumber \\ 
& \leq \mathbb{P}\Bigl\{\max_{j=1, \ldots,N}  \|X_{(k_{\mathrm{L}}(z_j))}(z_{j}') - z_{j}'\| \geq \epsilon_{n}/6 \Bigr\} \nonumber \\
& \leq \sum_{j=1}^{N} \mathbb{P}\bigl\{\|X_{(k_{\mathrm{L}}(z_j))}(z_{j}') - z_{j}'\| \geq \epsilon_{n}/6 \bigr\} \nonumber \\
& \leq N \max_{j=1,\ldots,N} \exp\Bigl(-\frac{1}{2}np_{\epsilon_n/6}(z_j') +k_{\mathrm{L}}(z_j')\Bigr)  = O(n^{-M}),
\end{align}
uniformly for $P \in \mathcal{P}_{d,\theta}$ and $k_{\mathrm{L}} \in K_{\beta,\tau}$.  Now, for $3\epsilon_n/2 < 2\epsilon_0$, 
\begin{align*}
  \sup_{P \in \mathcal{P}_{d,\theta}} \sup_{k_{\mathrm{L}} \in K_{\beta,\tau}} &\sup_{x_0 \in \mathcal{S}_n} \sup_{|t| < \epsilon_n}  \sup_{x^n \in A_{k_{\mathrm{L}}} }  \max_{1\leq i \leq k_{\mathrm{L}}(x)} |\eta(x_{(i)}(x)) - 1/2| \\
  &\leq 3M_0\sup_{P \in \mathcal{P}_{d,\theta}} \sup_{k_{\mathrm{L}} \in K_{\beta,\tau}} \frac{\epsilon_n}{2} \leq \frac{3M_0}{2\beta^{1/2}\log^{1/2}(n-1)} \to 0.
\end{align*}
It follows that
\begin{equation}
\label{Eq:etavar}
\sup_{x^n \in A_{k_{\mathrm{L}}}}\biggl|\frac{1}{k_{\mathrm{L}}(x)^2}\sum_{i=1}^{k_{\mathrm{L}}(x)}  \eta(x_{(i)}(x) )\{1-\eta(x_{(i)}(x))\}  - \frac{1}{4k_{\mathrm{L}}(x)}\biggr| = o\Bigl(\frac{1}{k_{\mathrm{L}}(x)}\Bigr)
\end{equation}
as $n \to \infty$, uniformly for $P \in \mathcal{P}_{d,\theta}$, $k_{\mathrm{L}} \in K_{\beta,\tau}$, $x_0 \in \mathcal{S}_n$ and $|t|< \epsilon_n$.  The claim~\eqref{eq:varexp} follows from~\eqref{Eq:CompProb} and~\eqref{Eq:etavar}.

\bigskip

\textbf{Step 3}: \ \ In this step, we emphasise the dependence of $\hat{\mu}_n(x,x^n) = \mathbb{E}\{\hat{S}_n(x)| X^n = x^n\}$ on $k_{\mathrm{L}}$ by writing it as $\hat{\mu}_n^{(k_{\mathrm{L}})}(x,x^n)$.  We show that
\begin{equation}
\label{eq:biasvar}
\mathrm{Var}\{\hat{\mu}_n^{(k_{{\mathrm{L}}})}(x,X^n) \} =O \biggl\{ \frac{1}{k_{\mathrm{L}}(x_{0})} \biggl( \frac{k_{\mathrm{L}}(x_0)}{n \bar{f}(x_0)} \biggr)^{2/d} \biggr\}
\end{equation}
uniformly for $P \in \mathcal{P}_{d,\theta}$, $k_{\mathrm{L}} \in K_{\beta,\tau}$, $x_0 \in \mathcal{S}_n$ and $|t|< \epsilon_n$. We will write $X^{n,j}\  := \ (X_1 \ldots X_{j-1} \ X_{j+1} \ldots X_n)$, considered as a random $d \times (n-1)$ matrix, so that
\[
	\hat{\mu}_n^{(k_{\mathrm{L}})}(x,X^n) - \hat{\mu}_{n-1}^{(k_{\mathrm{L}})}(x,X^{n,(i)}) = \frac{1}{k_{\mathrm{L}}} \{ \eta(X_{(i)}) - \eta(X_{(k_{\mathrm{L}}+1)}) \} \mathbbm{1}_{\{ i \leq k_{\mathrm{L}}\}}.
\]
It follows from the Efron--Stein inequality \citep[e.g.][Theorem~3.1]{Boucheron:13} that
\begin{align}
\label{eq:efronstein}
	\mathrm{Var}\{\hat{\mu}_n^{(k_{\mathrm{L}})}(x,X^n) \} &\leq \sum_{i=1}^n \mathbb{E} \bigl[ \{\hat{\mu}_n^{(k_{\mathrm{L}})}(x,X^n) - \hat{\mu}_{n-1}^{(k_{\mathrm{L}})}(x,X^{n,(i)}) \}^2 \bigr] \nonumber \\
&= \frac{1}{k_{\mathrm{L}}^2} \sum_{i=1}^{k_{\mathrm{L}}} \mathbb{E}\bigl[ \{ \eta(X_{(i)}) - \eta(X_{(k_{\mathrm{L}}+1)}) \}^2 \bigr] \nonumber \\
	& \leq \frac{2}{k_{\mathrm{L}}^2} \sum_{i=1}^{k_{\mathrm{L}}} \mathbb{E} \bigl[ \{ \eta(X_{(i)}) - \eta(x) \}^2 + \{ \eta(X_{(k_{\mathrm{L}}+1)}) - \eta(x) \}^2 \bigr].
\end{align}
Recall the definition of $r_n$ given in \eqref{eq:rn}. Now observe that, for $\max (\epsilon_n , r_n) \leq \epsilon_0$ and all $M>0$ we have that
\begin{align}
\label{eq:rn2}
	&\max_{i \in \{1, \ldots, k_{\mathrm{L}}+1\}} \mathbb{E} \bigl[ \{ \eta(X_{(i)}) - \eta(x) \}^2 \bigr] \nonumber \\
	& \hspace{50pt} \leq \max_{i \in \{1, \ldots, k_{\mathrm{L}}+1\}} \mathbb{E} \bigl[ \{ \eta(X_{(i)}) - \eta(x) \}^2 \mathbbm{1}_{\{\|X_{(i)}-x\| \leq r_n\}} \bigr] \nonumber
\\ & \hspace{200 pt} + \mathbb{P}(\|X_{(k_{\mathrm{L}}+1)}-x\| > r_n) \nonumber \\
	& \hspace{50pt} \leq r_n^2M_0 + O(n^{-M}),
\end{align}
uniformly for $P \in \mathcal{P}_{d,\theta}$, $k_{\mathrm{L}} \in K_{\beta,\tau}$, $x_0 \in \mathcal{S}_n$ and $|t|< \epsilon_n$. The final inequality here follows from similar arguments to those used to bound $R_1$. Now~\eqref{eq:biasvar} follows from~\eqref{eq:efronstein} and~\eqref{eq:rn2}.

\bigskip

\textbf{Step 4}: We show that 
\[
\sup_{P \in \mathcal{P}_{d,\theta}} \sup_{k_{\mathrm{L}}  \in K_{\beta,\tau}}\sup_{x \in \mathcal{R}_{n}\setminus\mathcal{S}^{\epsilon_n}}| \mathbb{P}\{\hat{S}_n(x) < 1/2\} - \mathbbm{1}_{\{\eta(x) < 1/2\}}| = O(n^{-M}),
\]
for each $M>0$, as $n \to \infty$.  First, by \textbf{(A.3)} and Proposition~\ref{Prop:DG1} in Section~\ref{Sec:Tubular}, there exists $c_0 > 0$ such that for every $r \in (0,\epsilon_0]$, $P \in \mathcal{P}_{d,\theta}$ and $k_{\mathrm{L}}  \in K_{\beta,\tau}$,
\[
\inf_{x \in \mathcal{R}_{n} \setminus \mathcal{S}^{r}} |\eta(x) - 1/2| \geq c_0\min\biggl\{r \, , \, \inf_{x\in \mathcal{R}_{n} \setminus \mathcal{S}^{\epsilon_0}} \delta_n(x)^{\beta/2}\biggr\}.
\]
Hence, on the event $A_{k_{\mathrm{L}}}$, for $\epsilon_n < \epsilon_0$ and $x \in \mathcal{R}_{n} \setminus \mathcal{S}^{\epsilon_{n}}$, all of the $k_{\mathrm{L}}$ nearest neighbours of $x$ are on the same side of $\mathcal{S}$, so
\begin{align*}
|\hat{\mu}_n(x,X^n) - 1/2| & = \biggl|\frac{1}{k_{\mathrm{L}}} \sum_{i=1}^{k_{\mathrm{L}}} \eta(X_{(i)}) - 1/2\biggr| 
\\ & \geq \! \! \inf_{z \in B_{\epsilon_n/2}(x)}\!\! |\eta(z) - 1/2| \geq \! c_0\min\biggl\{\frac{\epsilon_n}{2}  , \inf_{x\in \mathcal{R}_{n} \setminus \mathcal{S}^{\epsilon_0}} \delta_n(x)^{\beta/2}\biggr\}.
\end{align*}
Now, conditional on $X^n$, $\hat{S}_n(x)$ is the sum of $k_{\mathrm{L}}(x)$ independent terms. Therefore, by Hoeffding's inequality,
\begin{align*}
\sup_{P \in \mathcal{P}_{d,\theta}} &\sup_{k_{\mathrm{L}}  \in K_{\beta,\tau}} \sup_{x\in \mathcal{R}_{n}\setminus \mathcal{S}^{\epsilon_{n}}} \bigl|\mathbb{P}\{\hat{S}_n(x) <1/2\} -  \mathbbm{1}_{\{\eta(x) < 1/2\}}\bigr|   
\\ &= \sup_{P \in \mathcal{P}_{d,\theta}} \sup_{k_{\mathrm{L}}  \in K_{\beta,\tau}} \sup_{x\in \mathcal{R}_{n}\setminus \mathcal{S}^{\epsilon_{n}}} \bigl|\mathbb{E}\{ \mathbb{P}\{\hat{S}_n(x) <1/2 | X^n\}- \mathbbm{1}_{\{\eta(x) < 1/2\}}\bigr|
\\ &\leq \sup_{P \in \mathcal{P}_{d,\theta}} \sup_{k_{\mathrm{L}}  \in K_{\beta,\tau}} \sup_{x\in \mathcal{R}_{n}\setminus \mathcal{S}^{\epsilon_{n}}}\Bigl\{\mathbb{E}\bigl[e^{-2k_{\mathrm{L}} \{ \hat{\mu}_n(x, X^n) -1/2\}^2}\mathbbm{1}_{A_{k_{\mathrm{L}}}}\bigr] + \mathbb{P}(A_{k_{\mathrm{L}}}^c)\Bigr\}
\\&= O(n^{-M})
\end{align*}
for every $M > 0$.  This completes Step~4.

\bigskip

\textbf{Step 5}: It is now convenient to be more explicit in our notation, by writing $x_0^t := x_0 + t \dot{\eta}(x_0)/\|\dot{\eta}(x_0)\|$.  We also let 
\[
\psi(x) :=   \{2\eta(x)-1\} \bar{f}(x)  = \pi_1f_1(x) - \pi_0 f_0(x).
\]  
Recall that $\mathcal{S}_n := \mathcal{S} \cap \mathcal{R}_n$ and let
\[
  W_{n,2} := \biggl(\int_{\mathcal{S}^{\epsilon_{n}} \cap \mathcal{R}_{n}} - \int_{\mathcal{S}_n^{\epsilon_{n}}}\biggr) \psi(x) [\mathbb{P}\{\hat{S}_n(x) < 1/2\} - \mathbbm{1}_{\{\eta(x) <1/2\}} ]  \, dx.
\]
We show that
\begin{align*} 
&\int_{\mathcal{S}^{\epsilon_{n}} \cap \mathcal{R}_{n}}  \psi(x) [\mathbb{P}\{\hat{S}_n(x) < 1/2\} - \mathbbm{1}_{\{\eta(x) <1/2\}} ]  \, dx 
  \\ &= \int_{ \mathcal{S}_n} \int_{-\epsilon_{n}}^{\epsilon_{n}} \psi(x_0^t)[ \mathbb{P}\{\hat{S}_n(x_0^t) < 1/2\} - \mathbbm{1}_{\{t <0\}}]\, dt \, d\mathrm{Vol}^{d-1}(x_0)\{1+o(1)\} \\
  &\hspace{10cm}+ W_{n,2}
\end{align*}
uniformly for $P \in \mathcal{P}_{d,\theta}$ and $k_{\mathrm{L}} \in K_{\beta,\tau}$, and that for all $n \geq 2$,
\begin{equation}
  \label{Eq:Wn2}
  \sup_{P \in \mathcal{P}_{d,\theta}} \sup_{k_{\mathrm{L}}  \in K_{\beta,\tau}} \frac{|W_{n,2}|}{P_X\bigl((\partial\mathcal{R}_n)^{\epsilon_n}\cap \mathcal{S}^{\epsilon_n}\bigr)} \leq 1.
\end{equation}
Now by Proposition~\ref{Prop:DG2} in Section~\ref{Sec:Tubular}, for $\epsilon_n \leq \epsilon_0$, the map $x(x_0, t) = x_0^t$ is a diffeomorphism from $\mathcal{S}_n \times (-\epsilon_n,\epsilon_n)$ to $\mathcal{S}_n^{\epsilon_{n}}$, where
\[
\mathcal{S}_n^\epsilon := \biggl\{x_0 + t \frac{\dot{\eta}(x_0)}{\|\dot{\eta}(x_0)\|} : x_0 \in \mathcal{S}_n, |t| < \epsilon \biggr\}. 
\]
Furthermore, for such $n$, and $|t| < \epsilon_{n}$, $\text{sgn}\{\eta(x_{0}^{t}) -1/2\} = \text{sgn}(t)$.  It follows from this and~\eqref{Eq:Conc} in Section~\ref{Sec:Forms} that 
\begin{align*}
&\int_{\mathcal{S}^{\epsilon_{n}} \cap \mathcal{R}_{n}}  \psi(x) [\mathbb{P}\{\hat{S}_n(x) < 1/2\} - \mathbbm{1}_{\{\eta(x) <1/2\}} ]  \, dx  \\
&= \int_{\mathcal{S}_n^{\epsilon_{n}}} \psi(x)[\mathbb{P}\{\hat{S}_n(x) < 1/2\} - \mathbbm{1}_{\{\eta(x) <1/2\}} ] \, dx + W_{n,2}
\\ &= \int_{ \mathcal{S}_n} \int_{-\epsilon_{n}}^{\epsilon_{n}} \det(I + tB)\psi(x_0^t)[ \mathbb{P}\{\hat{S}_n(x_0^t) < 1/2\} - \mathbbm{1}_{\{t <0\}}]\, dt \, d\mathrm{Vol}^{d-1}(x_0) \\
  &\hspace{10cm}+ W_{n,2},
\end{align*}
where $B$ is defined in~\eqref{Eq:B} in Section~\ref{Sec:Tubular}, and $\det(I+tB) = 1 + o(1)$ as $n \rightarrow \infty$, uniformly for $P \in \mathcal{P}_{d,\theta}$, $x_0 \in \mathcal{S}$ and $t \in (-\epsilon_n,\epsilon_n)$.  Now observe that $(\mathcal{S}^{\epsilon_{n}} \cap \mathcal{R}_{n}) \setminus \mathcal{S}_n^{\epsilon_{n}} \subseteq (\partial \mathcal{R}_n)^{\epsilon_n}\cap \mathcal{S}^{\epsilon_n}$ and $\mathcal{S}_n^{\epsilon_{n}} \setminus (\mathcal{S}^{\epsilon_{n}} \cap \mathcal{R}_{n}) \subseteq (\partial \mathcal{R}_n)^{\epsilon_n}\cap \mathcal{S}^{\epsilon_n}$.  We deduce from this and the definition of $W_{n,2}$ that~\eqref{Eq:Wn2} holds.

\bigskip

\textbf{Step 6}: The last step in the main argument is to show that 
\begin{align*} 
\tilde{W}_{n,1} &:= \int_{\mathcal{S}_n} \int_{-\epsilon_{n}}^{\epsilon_{n}} \psi(x_0^t)[ \mathbb{P}\{\hat{S}_n(x_0^t) < 1/2\} - \mathbbm{1}_{\{t <0\}}] \,dt\, d\mathrm{Vol}^{d-1}(x_0)
  \\ & \hspace{20 pt} - \int_{\mathcal{S}_n} \frac{\bar{f}(x_0)}{\|\dot\eta(x_0)\|}\biggl\{\frac{1}{4k_{\mathrm{L}}(x_0)}+ \Bigl(\frac{k_{\mathrm{L}}(x_0)}{n\bar{f}(x_0)}\Bigr)^{4/d}a(x_0)^2\biggr\} \, d\mathrm{Vol}^{d-1}(x_0) \\
  &= o(\gamma_n(k_\mathrm{L}))
\end{align*}
as $n \to \infty$, uniformly for $P \in \mathcal{P}_{d,\theta}$ and $k_{\mathrm{L}} \in K_{\beta,\tau}$.  First observe that
\begin{align*} 
& \int_{\mathcal{S}_n} \int_{-\epsilon_{n}}^{\epsilon_{n}} \psi(x_0^t)[ \mathbb{P}\{\hat{S}_n(x_0^t) < 1/2\} - \mathbbm{1}_{\{t <0\}}] \,dt\, d\mathrm{Vol}^{d-1}(x_0)
\\ & \hspace{5 pt} =  \int_{\mathcal{S}_n} \int_{-\epsilon_{n}}^{\epsilon_{n}} t\|\dot\psi(x_0)\| [ \mathbb{P}\{\hat{S}_n(x_0^t) < 1/2\} - \mathbbm{1}_{\{t <0\}}] \,dt\, d\mathrm{Vol}^{d-1}(x_0) \{1+o(1)\},
\end{align*}
uniformly for $P \in \mathcal{P}_{d,\theta}$ and $k_{\mathrm{L}} \in K_{\beta,\tau}$.  Now, write $\mathbb{P}\{\hat{S}_n(x_0^t) < 1/2\} - \mathbbm{1}_{\{t <0\}} = \mathbb{E}[\mathbb{P}\{\hat{S}_n(x_0^t) < 1/2 | X^n \} - \mathbbm{1}_{\{t <0\}}].$ Note that, given $X^n$, $\hat{S}_n(x) = \frac{1}{k_{\mathrm{L}}(x)} \sum_{i=1}^{k_{\mathrm{L}}(x)} \mathbbm{1}_{\{Y_{(i)}=1\}}$ is the sum of $k_{\mathrm{L}}(x)$ independent Bernoulli variables, satisfying $\mathbb{P}(Y_{(i)} = 1| X^n) = \eta(X_{(i)})$.  Let $\Phi$ be the standard normal distribution function, and let
\begin{align*}
\hat{\theta}(x) \equiv \hat{\theta}_n(x) &:= -\{ \hat{\mu}_n(x, X^n) -1/2\}/\hat{\sigma}_n(x,X^n) \\
\bar{\theta}(x_0,t) \equiv \bar{\theta}_n(x_0,t) &:= -2k_{\mathrm{L}}(x_0)^{1/2}\biggl\{t\|\dot{\eta}(x_0)\| + \biggl(\frac{k_{\mathrm{L}}(x_0)}{n\bar{f}(x_0)}\biggr)^{2/d}a(x_0)\biggr\}.
\end{align*}
We can write 
\begin{align*}
&\int_{-\epsilon_{n}}^{\epsilon_{n}}  t\|\dot\psi(x_0)\|[ \mathbb{P}\{\hat{S}_n(x_0^t) < 1/2\} - \mathbbm{1}_{\{t <0\}}] \,dt
\\ & \hspace{50 pt}  = \int_{-\epsilon_{n}}^{\epsilon_{n}} t \|\dot{\psi}(x_0)\|\mathbb{E}\bigl\{\Phi\bigl(\hat{\theta}(x_0^t)\bigr) - \mathbbm{1}_{\{t <0\}}\bigr\} \,dt + R_5(x_0)
\\ & \hspace{50 pt} = \int_{-\epsilon_{n}}^{\epsilon_{n}} t \|\dot{\psi}(x_0)\|\bigl\{\Phi\bigl(\bar{\theta}(x_0,t)\bigr) - \mathbbm{1}_{\{t <0\}}\bigr\} \,dt + R_5(x_0) + R_6(x_0),
\end{align*}
where we show in Step 7 that 
\begin{equation}
\label{Eq:R6R7}
\biggl|\int_{\mathcal{S}_n} \bigl\{R_5(x_0) + R_6(x_0)\bigr\} \, d\mathrm{Vol}^{d-1}(x_0)\biggr| = o(\gamma_n(k_\mathrm{L}))
\end{equation}
uniformly for $P \in \mathcal{P}_{d,\theta}$ and $k_{\mathrm{L}} \in K_{\beta,\tau}$.  Then, substituting $u = 2k_{\mathrm{L}}(x_0)^{1/2}t$, we see that
\begin{align*} 
&\int_{-\epsilon_{n}}^{\epsilon_{n}} t \|\dot{\psi}(x_0)\| \bigl[\Phi\bigl(\bar{\theta}(x_0,t)\bigr) - \mathbbm{1}_{\{t <0\}}\bigr] \,dt
\\ &= \frac{1}{4k_{\mathrm{L}}(x_0)} \int_{-2k_{\mathrm{L}}(x_0)^{1/2}\epsilon_{n}}^{2k_{\mathrm{L}}(x_0)^{1/2}\epsilon_{n}} u \|\dot{\psi}(x_0)\|\biggl\{\Phi\biggl(\bar{\theta}\Bigl(x_0,\frac{u}{2k_{\mathrm{L}}(x_0)^{1/2}}\Bigr)\biggr) - \mathbbm{1}_{\{u <0\}}\biggr\} \,du
\\ &= \biggl\{\frac{\bar{f}(x_0)}{4k_{\mathrm{L}}(x_0)\|\dot\eta(x_0)\|} + \Bigl(\frac{k_{\mathrm{L}}(x_0)}{n\bar{f}(x_0)}\Bigr)^{4/d}\frac{\bar{f}(x_0)a(x_0)^2}{\|\dot\eta(x_0)\|}\biggr\} \{1+o(1)\},
\end{align*}
uniformly for $P \in \mathcal{P}_{d,\theta}$, $k_{\mathrm{L}} \in K_{\beta,\tau}$ and $x_0 \in \mathcal{S}_n$.  The conclusion follows by integrating with respect to $d\mathrm{Vol}^{d-1}$ over $\mathcal{S}_n$.

\bigskip
\textbf{Step 7}: It remains to bound the error terms $R_1, R_2, R_5$ and $R_6$ -- these bounds are presented in Appendix~\ref{sec:step7}.
\end{proof}

\subsection{Proof of Theorem~\ref{thm:exriskRdstandard}}

\begin{proof}[Proof of Theorem~\ref{thm:exriskRdstandard}]
Let $k \in K_{\beta}$, and note that since $k_{\mathrm{L}}(x) = k$ is constant, we have that $c_n  =  \ell\bigl(k/(n-1)\bigr)$,  and  $\delta_n\, =\, \frac{k}{n-1} c_n^d \log^d ( \frac{n-1}{k}).$
Now let 
\[
	\mathcal{R}_n = \{x \in \mathbb{R}^d : \bar{f}(x) > \delta_n \} \cap \mathcal{X}_{\bar{f}},
      \]
      and observe that by \citet[][Lemma~10(i)]{Berrett:16}, for $P \in \mathcal{P}_{d,\theta}$,
      \begin{equation}
        \label{Eq:fbarinfty}
        \|\bar{f}\|_\infty^\rho \geq \frac{\rho^\rho d^d}{a_d^\rho M_0^d(\rho+d)^{\rho+d}}.
        \end{equation}
        It follows that we can find $n_0 \in \mathbb{N}$ be large enough that $\mathcal{R}_n$ is non-empty for all $P \in \mathcal{P}_{d,\theta}$, $k \in K_\beta$ and $n \geq n_0$, so that, by Assumption~\textbf{(A.1)}, for $n \geq n_0$ it is an open subset of $\mathbb{R}^d$, and therefore a $d$-dimensional manifold.  Let $\mathcal{S}_n := \mathcal{S} \cap \mathcal{R}_n$,
        \[
	B_{1,n} := \int_{\mathcal{S}_n} \frac{\bar{f}(x_0)}{4\|\dot\eta(x_0)\|} \, d\mathrm{Vol}^{d-1}(x_0) 
\]
and
\[
B_{2,n} := \int_{\mathcal{S}_n} \frac{\bar{f}(x_0)^{1-4/d}}{\|\dot\eta(x_0)\|}a(x_0)^2 \, d\mathrm{Vol}^{d-1}(x_0).
\]
Recalling the definition of $\epsilon_n$ in~\eqref{Eq:epsn}, for $n \geq n_0$, we may apply Theorem~\ref{thm:exriskRtn} with $k_{\mathrm{L}}(x) = k$ for all $x \in \mathbb{R}^d$ to deduce that
\begin{align*}
R_{\mathcal{R}_{n}}(\hat{C}_n^{k\mathrm{nn}}) - R_{\mathcal{R}_{n}}(C^{\mathrm{Bayes}}) = B_{1,n} \frac{1}{k} + B_{2,n} \Bigl(\frac{k}{n}\Bigr)^{4/d} + W_{n,1} + W_{n,2},
\end{align*}
where $\sup_{P \in \mathcal{P}_{d,\theta}} \sup_{k \in K_\beta} |W_{n,1}|/\gamma_n(k) \rightarrow 0$ and where
\[
  \limsup_{n \rightarrow \infty} \sup_{P \in \mathcal{P}_{d,\theta}} \sup_{k \in K_\beta} \frac{|W_{n,2}|}{P_X\bigl((\partial\mathcal{R}_{n})^{\epsilon_n} \cap \mathcal{S}^{\epsilon_n} \bigr)} \leq 1.
\]
We now show that, under the conditions of part (i), $B_{1,n}$ and $B_{2,n}$ are well approximated by integrals over the whole of the manifold $\mathcal{S}$, and that these integrals are uniformly bounded.  Given $x_0 \in \mathcal{S} \cap \{x \in \mathbb{R}^d:\bar{f}(x) > 0\}$, define $\epsilon_0(x_0):=\min \bigl\{ 1, \frac{\epsilon_0\log 2}{2d}, \frac{1}{4 \ell(\bar{f}(x_0))}\bigr\}$.  Then for any $t \in [-\epsilon_0(x_0),\epsilon_0(x_0)]$ we have by~\textbf{(A.2)} and Cauchy--Schwarz that
\begin{align*}
	\biggl| \frac{\bar{f}(x_0^t)}{\bar{f}(x_0)} -1 \biggr| &= \biggl| \frac{ \bar{f}(x_0^t) - \bar{f}(x_0) - (x_0^t - x_0)^T \nabla \bar{f}(x_0)}{\bar{f}(x_0)} + \frac{(x_0^t-x_0)^T \nabla \bar{f}(x_0)}{\bar{f}(x_0)} \biggr| \\
	& \leq \frac{t^2}{2} \ell \bigl(\bar{f}(x_0)\bigr) + |t| \ell\bigl( \bar{f}(x_0)\bigr)  \leq \frac{1}{2}.
\end{align*}
Moreover, writing $\lambda_1,\ldots,\lambda_d$ for the eigenvalues of the matrix $B$ defined in~\eqref{Eq:B}, for $t \in [-\epsilon_0(x_0),\epsilon_0(x_0)]$, we have
\[
  |\log \det(I+tB)| = \biggl|\sum_{j=1}^d \log(1+t\lambda_j)\biggr| \leq 2|t|\sum_{j=1}^d |\lambda_j| \leq 2|t|d\|B\|_{\mathrm{op}} \leq \frac{2|t|d}{\epsilon_0},
\]
so $\det(I+tB) \geq 1/2$.  Hence, for any $\tau \in (d/(\rho+d),1]$ there exists $A_\tau=A_\tau(d,\theta) > 0$ such that, writing $\bar{\tau}:=\frac{1}{2}( \tau + \frac{d}{\rho+d})$, by~\eqref{Eq:Conc}, H\"older's inequality and {\bf (A.4)}, we have
\begin{align}
\label{Eq:A42}
	\int_\mathcal{S}& \bar{f}(x_0)^\tau \, d \mathrm{Vol}^{d-1}(x_0) = \int_\mathcal{S} \frac{1}{2 \epsilon_0(x_0)} \int_{-\epsilon_0(x_0)}^{\epsilon_0(x_0)} \bar{f}(x_0)^\tau \,dt \,d \mathrm{Vol}^{d-1}(x_0) \nonumber \\
	& \leq 2^{\tau-1} \int_\mathcal{S} \int_{-\epsilon_0(x_0)}^{\epsilon_0(x_0)} \max \biggl\{1,\frac{2d}{\epsilon_0\log 2}, 4 \ell(2\bar{f}(x_0^t)/3) \biggr\} \bar{f}(x_0^t)^\tau \,dt \,d \mathrm{Vol}^{d-1}(x_0)\nonumber \\
	& \leq 2^{\tau} \int_{\mathcal{S}^{\epsilon_0}} \max \biggl\{1,\frac{2d}{\epsilon_0\log 2}, 4 \ell(2\bar{f}(x)/3) \biggr\}\bar{f}(x)^\tau \,dx \leq A_\tau \int_{\mathbb{R}^d} \bar{f}(x)^{\bar{\tau}} \,dx\nonumber\\
	& \leq A_\tau (1+M_0)^{\bar{\tau}} \biggl\{ \int_{\mathbb{R}^d} (1+\|x\|^\rho)^{-\frac{\bar{\tau}}{1-\bar{\tau}}} \,dx \biggr\}^{1-\bar{\tau}} =:A_\tau' < \infty.
\end{align}
Now, by Assumption~\textbf{(A.3)}, for any $P \in \mathcal{P}_{d,\theta}$, 
\begin{align*}
  B_{1} = \int_{\mathcal{S}} \frac{\bar{f}(x_0)}{4\|\dot\eta(x_0)\|} \, d\mathrm{Vol}^{d-1}(x_0) &\leq \frac{1}{4\epsilon_0 M_0} \int_{\mathcal{S}}\bar{f}(x_0) \, d\mathrm{Vol}^{d-1}(x_0) \leq \frac{A_1'}{4 \epsilon_0 M_0}.
\end{align*}
Moreover, writing $\bar{\tau}:=\frac{1}{2}(1+ \frac{d}{\rho+d})$,
\begin{align*}
\sup_{P \in \mathcal{P}_{d,\theta}} \sup_{k \in K_\beta} &(B_1 - B_{1,n}) = \sup_{P \in \mathcal{P}_{d,\theta}} \sup_{k \in K_\beta}\int_{\mathcal{S} \setminus \mathcal{R}_{n}} \frac{\bar{f}(x_0)}{4\|\dot\eta(x_0)\|} \, d\mathrm{Vol}^{d-1}(x_0) 
  \\ & \leq \sup_{P \in \mathcal{P}_{d,\theta}} \sup_{k \in K_\beta} \frac{1}{4\epsilon_0M_0} \int_{\mathcal{S} \setminus \mathcal{R}_{n}} \bar{f}(x_0) \, d\mathrm{Vol}^{d-1}(x_0) \\ 
	& \leq \sup_{P \in \mathcal{P}_{d,\theta}} \sup_{k \in K_\beta} \frac{\delta_n^{1-\bar{\tau}}}{4 \epsilon_0 M_0} \int_{\mathcal{S} \setminus \mathcal{R}_{n}} \bar{f}(x_0)^{\bar{\tau}} \,d \mathrm{Vol}^{d-1}(x_0) \\
	&\leq \frac{\ell^{d(1-\bar{\tau})}\bigl(1/(n-1)\bigr)\log^{d(1-\bar{\tau})}(n-1)}{4\epsilon_0M_0(n-1)^{\beta(1-\bar{\tau})}} A_{\bar{\tau}}'  \to 0.
\end{align*}
By Assumptions~\textbf{(A.2)},~\textbf{(A.3)},~\eqref{Eq:A42} and the fact that $\rho/(\rho+d) > 4/d$, we have, writing $\bar{\tau}:=\frac{1}{2}( 1- 4/d + \frac{d}{\rho+d})$, that 
\begin{align*}
\sup_{P \in \mathcal{P}_{d,\theta}} B_2 &= \sup_{P \in \mathcal{P}_{d,\theta}} \int_{\mathcal{S}} \frac{\bar{f}(x_0)^{1-4/d}}{\|\dot\eta(x_0)\|}a(x_0)^2 \, d\mathrm{Vol}^{d-1}(x_0) \\
	& \leq \sup_{P \in \mathcal{P}_{d,\theta}} \sup_{x_0 \in \mathcal{S}} \biggl\{\frac{a(x_0)^2 \bar{f}(x_0)^{\frac{\rho/(\rho+d)-4/d}{2}}}{\|\dot{\eta}(x_0)\| }\biggr\} \int_{\mathcal{S}} \bar{f}(x_0)^{\bar{\tau}} \, d\mathrm{Vol}^{d-1}(x_0) \\
	& \leq \sup_{\delta \in (0,M_0]} \frac{M_0\delta^{\frac{\rho/(\rho+d)-4/d}{2}}\bigl\{\ell(\delta) + 1/2\bigr\}^2}{(d+2)^2a_d^{4/d}\epsilon_0} A_{\bar{\tau}}'  < \infty.
\end{align*} 
Similarly,
\begin{align*}
  \sup_{P \in \mathcal{P}_{d,\theta}} &\sup_{k \in K_\beta} (B_2-B_{2,n}) = \sup_{P \in \mathcal{P}_{d,\theta}} \sup_{k \in K_\beta} \int_{\mathcal{S} \setminus \mathcal{R}_{n}} \frac{\bar{f}(x_0)^{1-4/d}}{\|\dot\eta(x_0)\|}a(x_0)^2 \, d\mathrm{Vol}^{d-1}(x_0) \\
	& \leq \sup_{k \in K_\beta} \sup_{\delta \in (0,\delta_n]}  \frac{M_0\delta^{\frac{\rho/(\rho+d)-4/d}{2}}\bigl\{\ell(\delta) + 1/2\bigr\}^2}{(d+2)^2a_d^{4/d}\epsilon_0} A_{\bar{\tau}}'   \to 0.
\end{align*}
A similar argument shows that $\gamma_n(k)=O\bigl(1/k + (k/n)^{4/d}\bigr)$, uniformly for $P \in \mathcal{P}_{d,\theta}$ and $k \in K_{\beta}$.

Finally, we bound $P_X\bigl((\partial\mathcal{R}_{n})^{\epsilon_n} \cap \mathcal{S}^{\epsilon_n} \bigr)$ and $R_{\mathcal{R}_{n}^c}(\hat{C}_n^{k\mathrm{nn}}) - R_{\mathcal{R}_{n}^c}(C^{\mathrm{Bayes}})$.  Suppose that $x \in(\partial\mathcal{R}_{n})^{\epsilon_n} \cap \mathcal{S}^{\epsilon_n}$.  Then there exists $z \in \partial \mathcal{R}_n \cap B_{\epsilon_n}(x) \cap \mathcal{S}^{2 \epsilon_n} $ with $\bar{f}(z) =\delta_n$. By Assumption~\textbf{(A.2)} we have that
\begin{equation}
\label{eq:nearboundary}
	\Bigl| \frac{\bar{f}(x)}{\bar{f}(z)} -1 \Bigr| \leq \ell\bigl(\bar{f}(z)\bigr) \|x-z\| + \frac{1}{2} \ell\bigl(\bar{f}(z)\bigr) \|x-z\|^2 \leq \frac{1+\epsilon_n/2}{\beta^{1/2} \log^{1/2}(n-1)}. 
\end{equation}
Thus there exists $n_1 \in \mathbb{N}$ such that $(\partial\mathcal{R}_{n})^{\epsilon_n} \cap \mathcal{S}^{\epsilon_n} \subseteq \{x \in \mathbb{R}^d : \bar{f}(x) \leq 2\delta_n \}$ for $n \geq n_1$.   By the moment assumption in \textbf{(A.4)} and H\"older's inequality, observe that for any $\alpha \in (0,1)$, $P \in \mathcal{P}_{d,\theta}$, $n \geq n_1$ and $\epsilon > 0$,
\begin{align}
\label{eq:tailbound1}
P_X\bigl(&(\partial\mathcal{R}_{n})^{\epsilon_n} \cap \mathcal{S}^{\epsilon_n} \bigr) \leq \mathbb{P}\{ \bar{f}(X) \leq 2 \delta_n \}  \nonumber
\\ & \hspace{30 pt} \leq  (2\delta_n)^\frac{\rho(1-\alpha)}{\rho+d} \int_{x: \bar{f}(x) \leq 2 \delta_n}  \bar{f}(x)^{1-\frac{\rho(1-\alpha)}{\rho+d}} \, dx \nonumber
\\ & \hspace{30pt} \leq  (2\delta_n)^\frac{\rho(1-\alpha)}{\rho+d} \Bigl\{\int_{\mathbb{R}^d} (1+\|x\|^\rho)\bar{f}(x) \, dx\Bigr\}^{1- \frac{\rho(1-\alpha)}{\rho+d}}  \nonumber
  \\ & \hspace{150 pt} \biggl\{\int_{\mathbb{R}^d} \frac{1}{(1+\|x\|^\rho)^\frac{d+\rho \alpha}{\rho(1-\alpha)} } \, dx \biggr\}^\frac{\rho(1-\alpha)}{\rho+d} \nonumber \\
  & \hspace{30pt} \leq  (2\delta_n)^\frac{\rho(1-\alpha)}{\rho+d}(1+M_0)^{1- \frac{\rho(1-\alpha)}{\rho+d}}\biggl\{\int_{\mathbb{R}^d} \frac{1}{(1+\|x\|^\rho)^\frac{d+\rho \alpha}{\rho(1-\alpha)} } \, dx \biggr\}^\frac{\rho(1-\alpha)}{\rho+d} \nonumber \\
&\hspace{30pt}= o\biggl(\Bigl(\frac{k}{n}\Bigr)^{\frac{\rho(1-\alpha)}{\rho+d} - \epsilon}\biggr)
\end{align}
uniformly for $k \in K_{\beta}$.  Moreover,
\[
R_{\mathcal{R}_{n}^c}(\hat{C}_n^{k\mathrm{nn}}) - R_{\mathcal{R}_{n}^c}(C^{\mathrm{Bayes}}) \leq P_X(\mathcal{R}_{n}^c) \leq \mathbb{P}\{ \bar{f}(X) \leq 2 \delta_n \},
\]
so the same bound~\eqref{eq:tailbound1} applies.  Since $\rho/(\rho+d) > 4/d$ and $\alpha \in (0,1)$ was arbitrary, this completes the proof of part~(i).

\medskip

For part (ii), in contrast to part (i), the dominant contribution to the excess risk could now arise from the tail of the distribution. First, as in part~(i), we have $B_{1,n} \to B_1 \leq  A_1'/(4 \epsilon_0 M_0)$, uniformly for $P \in \mathcal{P}_{d,\theta}$ and $k \in K_{\beta}$.  Furthermore, using Assumption~\textbf{(A.3)},~\eqref{Eq:A42} and the fact that $4/d \geq \rho/(\rho+d)$, we see that, for any $\epsilon' \in (0,\rho/(\rho+d)]$,
\begin{align*} 
\label{eq:b2n}
B_{2,n} \Bigl(\frac{k}{n}\Bigr)^{4/d}& \leq \delta_n^{\rho/(\rho+d)-\epsilon'} \int_{\mathcal{S}_n} \frac{\delta_n^{4/d - \rho/(\rho+d)} \bar{f}(x_0)^{1-4/d+\epsilon'}}{c_n^{4}\log^4((n-1)/k) \|\dot\eta(x_0)\|}a(x_0)^2 \, d\mathrm{Vol}^{d-1}(x_0)  \nonumber \\
	& \leq \sup_{x_0 \in \mathcal{S}_n} a(x_0)^2\frac{\delta_n^{\rho/(\rho+d)-\epsilon'}A_{d/(\rho+d)+\epsilon'}'}{\epsilon_0M_0 c_n^{4}\log^4((n-1)/k)} = o\bigl((k/n)^{\rho/(\rho+d) - \epsilon}\bigr),
\end{align*}
for every $\epsilon \in \bigl(\epsilon',\rho/(\rho+d)\bigr]$, uniformly for $P \in \mathcal{P}_{d,\theta}$ and $k \in K_{\beta}$, where the final conclusion follows from the fact that $\sup_{P \in \mathcal{P}_{d,\theta}} \sup_{x_0 \in \mathcal{S}_n} a^2(x_0) / c_n^2$ is bounded.  We can also bound $\gamma_n(k)$ by the same argument, so the result follows in the same way as in part~(i).
\end{proof}

\subsection{Proofs of results from Section~\ref{sec:semisup}}
\begin{proof}[Proof of Theorem~\ref{thm:exriskRdOadapt}]
Recall that
\[
k_{\mathrm{O}}(x) = \max\bigl[ \lceil (n-1)^\beta \rceil, \min\bigl\{\bigl\lfloor B\bigl\{\bar{f}(x)(n-1)\bigr\}^{4/(d+4)} \bigr\rfloor, \lfloor (n-1)^{1-\beta}\rfloor \bigr\}\bigr],
\]
and define
\[
	\delta_{n,\mathrm{O}}(x) := \frac{k_\mathrm{O}(x)}{n-1} c_n^d \log^d \Bigl( \frac{n-1}{k_\mathrm{O}(x)} \Bigr),
\]
where $c_n:=\sup_{x_0 \in \mathcal{S}:\bar{f}(x_0) \geq k_\mathrm{O}(x_0)/(n-1)} \ell\bigl(\bar{f}(x_0)\bigr)$. For $\alpha \in ((1+d/4)\beta,1)$ let
\[
	\mathcal{R}_n = \{x \in \mathbb{R}^d : \bar{f}(x) > (n-1)^{-(1-\alpha)} \} \cap \mathcal{X}_{\bar{f}}.
\]
Then there exists $n_0 \in \mathbb{N}$ such that for $n \geq n_0$ we have $\mathcal{R}_n \subseteq \bigl\{x \in \mathbb{R}^{d} :\bar{f}(x) \geq \delta_{n, \mathrm{O}}(x)\bigr\}$ for all $P \in \mathcal{P}_{d,\theta}$ and $B \in [B_*,B^*]$, and by Assumption \textbf{(A.1)} and~\eqref{Eq:fbarinfty}, we then have that $\mathcal{R}_n$ is a $d$-dimensional manifold.  There exists $n_1 \in \mathbb{N}$ such that for all $n \geq n_1$, $P \in \mathcal{P}_{d,\theta}$, $B \in [B_*,B^*]$ and $x \in \mathcal{R}_n \cap \mathcal{S}^{\epsilon_0}$ we have that $k_\mathrm{O}(x) =\bigl\lfloor B\bigl\{\bar{f}(x)(n-1)\bigr\}^{4/(d+4)} \bigr\rfloor$. By \textbf{(A.2)}, we therefore have that $k_\mathrm{O} \in K_{\beta,\tau}$ for some $\tau = \tau_n$ (which does not depend on $P \in \mathcal{P}_{d,\theta}$ or $B \in [B_*,B^*]$) with $\tau_n \searrow 0$.  

By a similar argument to that in~\eqref{eq:nearboundary}, there exists $n_2 \in \mathbb{N}$ such that for $n \geq n_2$, $P \in \mathcal{P}_{d,\theta}$, $B \in [B_*,B^*]$ and $x \in (\partial\mathcal{R}_{n})^{\epsilon_n} \cap \mathcal{S}^{\epsilon_n}$, we have $\bar{f}(x) \leq 2(n-1)^{-(1-\alpha)}$.  But, by Markov's inequality and H\"{o}lder's inequality, for $\tilde{\alpha}\in(0,1)$ and any $P \in \mathcal{P}_{d,\theta}$,
\begin{align}
\label{eq:tailbound}
\mathbb{P}&\{\bar{f}(X) \leq 2(n-1)^{-(1-\alpha)}\}  \nonumber
\\ &\leq \{2(n-1)^{-(1-\alpha)}\}^{\frac{\rho(1-\tilde{\alpha})}{\rho+d}} \int_{\mathbb{R}^d} \bar{f}(x)^{1-\frac{\rho(1-\tilde{\alpha})}{\rho+d}} \, dx \nonumber
\\ & \leq \{2(n-1)^{-(1-\alpha)}\}^{\frac{\rho(1-\tilde{\alpha})}{\rho+d}}(1+M_0)^{1-\frac{\rho(1-\tilde{\alpha})}{\rho+d}} \nonumber
\\ & \hspace{120 pt} \Bigl\{\int_{\mathbb{R}^d} \frac{1}{(1+\|x\|^\rho)^{(\rho+d)/\{\rho(1-\tilde{\alpha})\}-1}} \, dx\Bigr\}^{\frac{\rho(1-\tilde{\alpha})}{\rho+d}}.
\end{align}
Thus, if $\rho>4$, then we can choose $\alpha \in ((1+d/4)\beta, d(\rho-4)/\{\rho(d+4)\})$ and $\tilde{\alpha}<1-4(\rho+d)/\{\rho(1-\alpha)(d+4)\}$ in~\eqref{eq:tailbound} to conclude that 
\[
\sup_{P \in \mathcal{P}_{d,\theta}} P_X(\mathcal{R}_n^c) \leq \sup_{P \in \mathcal{P}_{d,\theta}} \mathbb{P}\{\bar{f}(X) \leq 2(n-1)^{-(1-\alpha)}\} = o(n^{-4/(d+4)}).
\]
Moreover, writing
\[
	B_{3,n} := \int_{\mathcal{S}_n} \frac{\bar{f}(x_0)^{d/(d+4)}}{\|\dot{\eta}(x_0)\|} \Bigl\{ \frac{1}{4B} +B^{4/d} a(x_0)^2 \Bigr\}\, d\mathrm{Vol}^{d-1}(x_0),
      \]
by very similar arguments to those given in the proof of Theorem~\ref{thm:exriskRdstandard}, $B_{3,n} \rightarrow B_3$ and $\gamma_n(k_\mathrm{O})=O(n^{-4/(d+4)})$ as $n \rightarrow \infty$, both uniformly for $P \in \mathcal{P}_{d,\theta}$ and $B \in [B_*,B^*]$. The proof of part~(i) therefore follows from Theorem~\ref{thm:exriskRtn}.

On the other hand, if $\rho \leq 4$, then choosing both $\tilde{\alpha} > 0$ and $\alpha> (1+d/4)\beta$ to be sufficiently small, we find from~\eqref{eq:tailbound} that 
\[
B_{3,n} n^{-4/(d+4)} + \gamma_n(k_\mathrm{O}) + P_X\bigl((\partial\mathcal{R}_{n})^{\epsilon_n} \cap \mathcal{S}^{\epsilon_n} \bigr) + P_X(\mathcal{R}_n^c)\! =\! o\Bigl(n^{-\frac{\rho}{\rho+d} + \beta + \epsilon}\Bigr),
\]
for every $\epsilon>0$, uniformly for $P \in \mathcal{P}_{d,\theta}$ and $B \in [B_*,B^*]$. After another application of Theorem~\ref{thm:exriskRtn}, this proves part~(ii).
\end{proof}

\begin{proof}[Proof of Theorem~\ref{thm:exriskRdSSadapt}]
We prove parts (i) and (ii) of the theorem simultaneously, by appealing to the corresponding arguments in the proof of Theorem~\ref{thm:exriskRdOadapt}.  First, as in the proof of Theorem~\ref{thm:exriskRdOadapt}, for $\alpha \in \bigl((1+d/4)\beta,1\bigr)$, we define $\mathcal{R}_n = \{x \in \mathbb{R}^d : \bar{f}(x) > (n-1)^{-(1-\alpha)} \} \cap \mathcal{X}_{\bar{f}}$ and introduce the following class of functions: for $\tau > 0$, let 
\[
\mathcal{F}_{n,\tau} := \biggl\{ \tilde{f}: \mathbb{R}^{d} \to \mathbb{R}: \tilde{f} \  \mathrm{continuous}, \sup_{x \in \mathcal{R}_{n}}\biggl|\frac{\bar{f}(x)}{\tilde{f}(x)} - 1\biggr| \leq \tau \biggr\}.
\]
Let $\tau = \tau_n:= 2(n-1)^{-\alpha/2}$.  We first show that $\hat{f}_{m} \in \mathcal{F}_{n,\tau}$ with high probability.  For $x \in \mathcal{R}_n$,
\[
\Bigl| \frac{\hat{f}_{m}(x)}{\bar{f}(x)}  - 1 \Bigr|  \leq (n-1)^{1-\alpha} |\hat{f}_{m}(x) - \bar{f}(x) |   \leq  (n-1)^{1-\alpha}  \|\hat{f}_{m} - \bar{f} \|_{\infty}.
\]
Now
\begin{equation}
\label{eq:inftybiasvar}
\|\hat{f}_{m} - \bar{f}\|_{\infty} \leq  \|\hat{f}_{m} - \mathbb{E} \hat{f}_{m} \|_{\infty} +  \|\mathbb{E} \hat{f}_{m} - \bar{f} \|_{\infty}.
\end{equation} 
To bound the first term in~\eqref{eq:inftybiasvar}, by \citet[Corollary~2.2]{Gine:2002}, there exist $C,L > 0$, such that
\begin{equation}
\label{eq:tailboundm}
\sup_{P \in \mathcal{P}_{d,\theta} \cap \mathcal{Q}_{d,\gamma,\lambda}} \mathbb{P}\biggl(\| \hat{f}_{m} - \mathbb{E}  \hat{f}_{m}\|_{\infty} \geq \frac{s}{m^{\gamma/(d+2 \gamma)}}\biggr)   \leq   L\biggl(\frac{4L}{4L+C}\biggr)^{\frac{A^{d}s^{2}}{LC \lambda R(K)}},
\end{equation}
for all  $s \in \Bigl[\frac{C\|\bar{f}\|_{\infty}^{1/2} R(K)^{1/2}}{A^{d/2}} \log^{1/2 }\Bigl(\frac{\|K\|_{\infty}m^{d/(2(d+2 \gamma))}}{\|\bar{f}\|_{\infty}^{1/2} A^{d/2} R(K)^{1/2}}\Bigr), \frac{ C \|\bar{f}\|_{\infty} R(K) m^{\gamma/(d+2 \gamma)} }{\|K\|_{\infty}}\Bigr]$ and $A \in [A_*,A^*]$.

Recall that for $P \in \mathcal{P}_{d,\theta}$, we have $\|\bar{f}\|_\infty \leq \lambda$ and $\|\bar{f}\|_\infty$ also satisfies the lower bound in~\eqref{Eq:fbarinfty}.  Hence, by applying the bound in~\eqref{eq:tailboundm} with $s = s_0 := m^{\gamma/(d+2\gamma)}/(n-1)^{1-\alpha/2}$, since $m \geq m_0 (n-1)^{d/\gamma+2}$, we have that there exists $n_* \in \mathbb{N}$, not depending on $P \in \mathcal{P}_{d,\theta}$ or $A \in [A_*,A^*]$ such that for $n \geq n_*$,
\begin{align*}
\sup_{P \in \mathcal{P}_{d,\theta} \cap \mathcal{Q}_{d,\gamma,\lambda}} \mathbb{P}  \biggl\{\| \hat{f}_{m} - &\mathbb{E}  \hat{f}_{m}\|_{\infty}  \geq  \frac{1} {(n-1)^{1-\alpha/2}} \biggr\} 
\\ &= \sup_{P \in \mathcal{P}_{d,\theta} \cap \mathcal{Q}_{d,\gamma,\lambda}} \mathbb{P}  \Bigl\{\| \hat{f}_{m} - \mathbb{E}  \hat{f}_{m}\|_{\infty} \geq  s_0 m^{-\gamma/(d+2 \gamma)} \Bigr \}  
  \\ &  \leq  L\biggl(\frac{4L}{4L+C}\biggr)^{\frac{A^{d}(n-1)^\alpha m_0^{2\gamma/(d+2 \gamma)}}{LC \lambda R(K)}}    = O(n^{-M}),
\end{align*}
for all $M>0$, uniformly for $A \in [A_*,A^*]$. For the second term in~\eqref{eq:inftybiasvar}, by a Taylor expansion, we have that for all $P \in \mathcal{P}_{d,\theta} \cap \mathcal{Q}_{d,\gamma,\lambda}$ and $A \in [A_*,A^*]$,
\begin{align*}
\|\mathbb{E} \hat{f}_{m} - \bar{f} \|_{\infty} &\leq \lambda A^\gamma m^{-\gamma/(d+2 \gamma)} \int_{\mathbb{R}^d} \|z\|^\gamma |K(z)| \,dz 
\\& \leq \frac{\lambda A^\gamma m_0^{-\gamma/(d+2 \gamma)} }{n-1}\int_{\mathbb{R}^d} \|z\|^\gamma |K(z)| \,dz.
\end{align*} 
It follows that, writing $\tau_0 := 2(n-1)^{-\alpha/2}$, we have
\[
  \sup_{P \in \mathcal{P}_{d,\theta} \cap \mathcal{Q}_{d,\gamma,\lambda}} \sup_{A \in [A_*,A^*]} \mathbb{P} (\hat{f}_{m} \notin \mathcal{F}_{n,\tau_0}) = O(n^{-M})
\]
for all $M>0$.

Now, for $\tilde{f} \in \mathcal{F}_{n,\tau_0}$, let 
\[
k_{\tilde{f}}(x) : = \max\Bigl[\lceil (n-1)^\beta \rceil, \min\bigl\{\lfloor B\{\tilde{f}(x)(n-1)\}^{4/(d+4)} \rfloor, \lfloor (n-1)^{1-\beta}\rfloor\bigr\}\Bigr].
\]
Let $c_n := \sup_{x_0 \in \mathcal{S}:\bar{f}(x_0) \geq k_{\tilde{f}}(x_0)/(n-1)} \ell\bigl(\bar{f}(x_0)\bigr)$, and let
\[
	\delta_{n, \tilde{f}}(x):= \frac{k_{\tilde{f}}(x)}{n-1} c_n^d \log^d \Bigl( \frac{n-1}{k_{\tilde{f}}(x)} \Bigr).
\]
Then there exists $n_0 \in \mathbb{N}$ such that for $n \geq n_0$ and $\tilde{f} \in \mathcal{F}_{n,\tau_0}$, we have $\mathcal{R}_n \subseteq \bigl\{x \in \mathbb{R}^{d} :\bar{f}(x) \geq \delta_{n, \tilde{f}}(x)\bigr\}$ and $k_{\tilde{f}} \in K_{\beta,\tau_0}$.  We can therefore apply Theorem~\ref{thm:exriskRtn} (similarly to the application in the proof of Theorem~\ref{thm:exriskRdOadapt}) to conclude that for every $\epsilon > 0$,
\begin{align*}
\label{eq:exriskG} 
R(\hat{C}_n^{k_{\tilde{f}}\mathrm{nn}}) - R(C^{\mathrm{Bayes}}) = &B_{3,n}n^{-4/(d+4)} + o\Bigl(n^{-4/(d+4)} + n^{-\frac{\rho}{\rho+d}+\beta+\epsilon}\Bigr)\end{align*}
uniformly for $P \in \mathcal{P}_{d,\theta} \cap \mathcal{Q}_{d,\gamma,\lambda}$ and $\tilde{f} \in \mathcal{F}_{n,\tau_0}$, where $B_{3,n}$ was defined in the proof of Theorem~\ref{thm:exriskRdOadapt}.  The proof of both parts~(i) and~(ii) is now completed by following the relevant steps in the proof of Theorem~\ref{thm:exriskRdOadapt}.
\end{proof}

\section*{Acknowledgements}

The authors are grateful to the anonymous reviewers, whose constructive comments helped to improve the paper.  We would also like to thank the Isaac Newton Institute for Mathematical Sciences for support and hospitality during the programme `Statistical Scalability' when work on this paper was undertaken. This work was supported by EPSRC grant number EP/R014604/1.

\begin{appendix}

\section{The relationship between our classes and the margin assumption}
\label{Sec:Margin}
Recall from \citet{Mammen:99} that a distribution $P$ on $\mathbb{R}^d \times \{0,1\}$ with marginal $P_X$ on $\mathbb{R}^d$ and regression function $\eta$ satisfies a \emph{margin assumption} with parameter $\alpha > 0$ if there exists $C > 0$ such that
\[
  P_X\bigl(\{x:|\eta(x) - 1/2| \leq s\}\bigr) \leq Cs^\alpha
\]
for all sufficiently small $s > 0$.  The following lemma clarifies the relationship between our classes and the margin assumption.
\begin{lemma}
\label{Lemma:Margin}
Let $P \in \mathcal{P}_{d,\theta}$ for some $\theta = (\epsilon_0,M_0,\rho,\ell,g) \in \Theta$.  Then $P$ satisfies a margin assumption with parameter $\alpha = 1$.
\end{lemma}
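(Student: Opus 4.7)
The plan is to decompose $\{x : |\eta(x) - 1/2| \leq s\}$ according to whether $x$ lies in the tube $\mathcal{S}^{\epsilon_{0}}$ about the decision boundary, and to show that each piece has $P_X$-measure $O(s)$ as $s \searrow 0$.

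On the complement $\mathbb{R}^d \setminus \mathcal{S}^{\epsilon_0}$, the last part of \textbf{(A.3)} gives $|\eta(x) - 1/2| \geq 1/\ell\bigl(\bar{f}(x)\bigr)$, so $|\eta(x) - 1/2| \leq s$ forces $\bar{f}(x) \leq \ell^{-1}(1/s)$, where the inverse is well defined via the monotonicity of $\ell$. Because $\ell \in \mathcal{L}$ is subpolynomial, $\ell^{-1}(1/s) = o(s^M)$ for every $M > 0$. Combining this with the H\"older-type tail bound $P_X(\{\bar{f}(X) \leq \delta\}) = O(\delta^{\rho/(\rho+d) - \epsilon})$, which follows from the moment assumption in \textbf{(A.4)} exactly as in the display~\eqref{eq:tailbound1} in the proof of Theorem~\ref{thm:exriskRdstandard}, yields a contribution of $o(s^M)$ for every $M > 0$, and in particular $o(s)$.

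For the portion inside $\mathcal{S}^{\epsilon_0}$, I would use the normal parameterisation $x_0^t := x_0 + t\dot{\eta}(x_0)/\|\dot{\eta}(x_0)\|$ together with the tubular neighbourhood diffeomorphism onto $\mathcal{S}^{\epsilon_0}$. A second-order Taylor expansion of $\eta$ along this ray, combined with the lower gradient bound $\|\dot{\eta}(x_0)\| \geq \epsilon_0 M_0$ from \textbf{(A.3)} and the Hessian bound $\|\ddot{\eta}\|_{\mathrm{op}} \leq M_0$, yields $|\eta(x_0^t) - 1/2| \geq (\epsilon_0 M_0/2)|t|$ for $|t| \leq \epsilon_0$. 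Hence $\{x \in \mathcal{S}^{\epsilon_0}: |\eta(x) - 1/2| \leq s\} \subseteq \mathcal{S}^h$ with $h := 2s/(\epsilon_0 M_0)$, provided $s$ is small enough that $h \leq \epsilon_0$. It therefore suffices to prove $P_X(\mathcal{S}^h) = O(h)$.

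To estimate the tube, the same diffeomorphism gives $P_X(\mathcal{S}^h) \leq 2\int_{\mathcal{S}} \int_{-h}^h \bar{f}(x_0^t)\, dt\, d\mathrm{Vol}^{d-1}(x_0)$ for small $h$, and the Taylor bound of \textbf{(A.2)} yields $\bar{f}(x_0^t) \leq \bar{f}(x_0)\{1 + 2h\ell(\bar{f}(x_0))\}$. The leading contribution is bounded by a multiple of $h \int_{\mathcal{S}} \bar{f}(x_0)\, d\mathrm{Vol}^{d-1}(x_0)$, which is finite because $\bar{f}(x_0) \leq M_0$ on $\mathcal{S}$ by \textbf{(A.2)} gives $\bar{f}(x_0) \leq M_0^{\rho/(\rho+d)} \bar{f}(x_0)^{d/(\rho+d)}$, and then \textbf{(A.4)} controls the surface integral. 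The main obstacle will be the correction $h^2 \int_{\mathcal{S}} \bar{f}(x_0)\ell(\bar{f}(x_0))\, d\mathrm{Vol}^{d-1}(x_0)$, since $\ell$ may blow up where $\bar{f}$ is small; I would handle it by splitting $\mathcal{S}$ at a threshold $\{\bar{f}(x_0) \leq \delta\}$ and using the subpolynomial growth of $\ell$ to make $\bar{f}^{\rho/(\rho+d)}\ell(\bar{f})$ a bounded function on the small-density piece, while on the large-density piece $\ell(\bar{f}(x_0)) \leq \ell(\delta)$ is a fixed constant. This makes the remainder of order $h^2$, hence negligible compared to $h$, and completes the argument with $C$ depending only on $\theta$.
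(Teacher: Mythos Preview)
Your proposal is correct and follows essentially the same route as the paper: the same decomposition into $\mathcal{S}^{\epsilon_0}$ and its complement, the same second-order Taylor bound $|\eta(x_0^t)-1/2|\geq(\epsilon_0 M_0/2)|t|$ to reduce the tube piece to $P_X(\mathcal{S}^h)$ with $h=2s/(\epsilon_0 M_0)$, and the same use of \textbf{(A.4)} to bound the resulting surface integral. The only cosmetic differences are that the paper handles the off-boundary piece by fixing the single exponent $\tau=\rho/(2(\rho+d))$ in $\ell(\delta)\leq A\delta^{-\tau}$ to obtain $O(s)$ directly, whereas you take the equivalent $o(s^M)$ route via the generalised inverse, and that you spell out the $h^2$ correction term in the tube estimate (which the paper absorbs into the claim ``as in Step~5''); your observation that $\bar f^{\rho/(\rho+d)}\ell(\bar f)$ is bounded makes the threshold split unnecessary, but the argument is sound either way.
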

\begin{proof}
By the final part of \textbf{(A.3)}, we have
\begin{align}
  \label{Eq:Decomposition}
    P_X\bigl(\{x:|\eta(x) - 1/2| \leq s\}\bigr) \leq P_X\bigl(\{x:|\eta(x) &- 1/2| \leq s\} \, \cap \, \mathcal{S}^{\epsilon_0}\bigr) \nonumber \\
    &+ P_X\bigl(\{x:\ell(\bar{f}(x)) \geq 1/s\}\bigr).
  \end{align}
  Now, by Proposition~\ref{Prop:DG1} in Section~\ref{Sec:Tubular}, for $x \in \mathcal{S}^{\epsilon_0}$, there exists $x_0 \in \mathcal{S}$ and $t \in (-\epsilon_0,\epsilon_0)$ such that $x = x_0 + t\dot{\eta}(x_0)/\|\dot{\eta}(x_0)\|$.  Thus, by a Taylor expansion,
  \[
    |\eta(x) - 1/2| \geq |t|\epsilon_0M_0 - \frac{1}{2}M_0t^2 \geq \frac{1}{2}|t|\epsilon_0M_0.
  \]
  We deduce as in Step 5 of the proof of Theorem~\ref{thm:exriskRtn} that there exists $s_0 = s_0(d,\theta) > 0$ such that for all $s \in (0,s_0]$,
  \begin{align}
    \label{Eq:Firstetaterm}
    P_X\bigl(\{x:|\eta(x) - 1/2| \leq s\} \, \cap \, &\mathcal{S}^{\epsilon_0}\bigr) \leq P_X\bigl(\mathcal{S}^{\frac{2s}{\epsilon_0M_0}}\bigr) \nonumber \\
    &\leq \frac{8s}{\epsilon_0M_0} \int_{\mathcal{S}} \bar{f}(x_0) \, d\mathrm{Vol}^{d-1}(x_0) \leq  \frac{8sA_1'}{\epsilon_0M_0},
  \end{align}
where the final bound follows from~\eqref{Eq:A42} in the main text.  For the second term in~\eqref{Eq:Decomposition}, we exploit the fact that since $\ell \in \mathcal{L}$, there exists $A = A(d,\theta) > 0$ such that $\ell(\delta) \leq A\delta^{-\frac{\rho}{2(\rho+d)}}$ for all $\delta > 0$.  Hence, arguing as in~\eqref{eq:tailbound1} in the main text, we find that
  \begin{align}
    \label{Eq:Secondetaterm}
    P_X\bigl(\{x:\ell(\bar{f}(x)) \geq 1/s\}\bigr) &\leq P_X\Bigl(\Bigl\{x:\bar{f}(x) \leq (As)^{\frac{2(\rho+d)}{\rho}}\Bigr\}\Bigr) \nonumber \\
    & \hspace{-3pt} \leq As(1+M_0)^{\frac{\rho+2d}{2(\rho+d)}}\biggl\{\int_{\mathbb{R}^d} \frac{1}{(1+\|x\|^\rho)^{\frac{\rho+2d}{\rho}}} \, dx\biggr\}^{\frac{\rho}{2(\rho+d)}}.
  \end{align}
 The result follows from~\eqref{Eq:Decomposition},~\eqref{Eq:Firstetaterm} and~\eqref{Eq:Secondetaterm}.
\end{proof}

\section{Example~1 from the main text}
\label{Sec:Example}
Recall that we consider the distribution $P$ on $\mathbb{R}^d \times \{0,1\}$ for which $\bar{f}(x)=\frac{\Gamma(3+d/2)}{2 \pi^{d/2}}(1-\|x\|^2)^2 \mathbbm{1}_{\{x \in B_1(0)\}}$ and $\eta(x) = \min(\|x\|^2,1)$. Since $\bar{f}$ is continuous on all of $\mathbb{R}^d$, it is clear that \textbf{(A.1)} is satisfied.

Now, $\mathcal{S}=\{x \in \mathbb{R}^d : \|x\|=2^{-1/2}\}$ and clearly $\mathcal{S} \cap \{x \in \mathbb{R}^d : \bar{f}(x) > 0\}$ is non-empty. For all $x_0 \in \mathcal{S}$ we have that $\bar{f}(x_0) = \frac{\Gamma(3+d/2)}{8 \pi^{d/2}} \leq M_0$. Since $\epsilon_0 \leq 1/10$ we have that $\mathcal{S}^{\epsilon_0} \subseteq B_{9/10}(0) \setminus B_{3/5}(0)$ and thus $\bar{f}$ is twice continuously differentiable on $\mathcal{S}^{\epsilon_0}$. Differentiating $\bar{f}$ twice on $B_1(0)$, we have that $\dot{\bar{f}}(x)=-2 \pi^{-d/2} \Gamma(3+d/2)(1-\|x\|^2)x$ and
\[
	\ddot{\bar{f}}(x)= 2 \pi^{-d/2} \Gamma(3+d/2) \{ 2xx^T -(1-\|x\|^2)I\}.
\]
Thus, for $x_0 \in \mathcal{S}$, we have $\|\dot{\bar{f}}(x_0)\|/\bar{f}(x_0) = 2^{5/2}\leq \ell(\bar{f}(x_0))$. We also have that, for any $x \in B_1(0)$,
\[
	\|\ddot{\bar{f}}(x)\|_\mathrm{op} = 2 \pi^{-d/2} \Gamma(3+d/2) \|2xx^T - (1-\|x\|^2)I\|_\mathrm{op} \leq \frac{6\Gamma(3+d/2)}{\pi^{d/2}},
\]
so that $\sup_{u \in B_{\epsilon_0}(0)} \|\ddot{\bar{f}}(x_0+u)\|_\mathrm{op} / \bar{f}(x_0) < 48 \leq \ell(\bar{f}(x_0))$ for any $x_0 \in \mathcal{S}$. Finally for \textbf{(A.2)} we consider the cases $x \in B_1(0) \setminus B_{\epsilon_0}(0)$ and $x \in B_{\epsilon_0}(0)$ separately. If $x \in B_1(0) \setminus B_{\epsilon_0}(0)$ then, for $r \in (0,\epsilon_0]$, at least a proportion $2^{-d}$ of the ball $B_{r}(x)$ is closer to the origin than $x$, and thus has larger density. This gives us that, for such $x$ and $r$, $p_r(x) \geq 2^{-d}a_d r^d \bar{f}(x) \geq \epsilon_0 a_d r^d \bar{f}(x)$. When $x \in B_{\epsilon_0}(0)$ and $r \in (0,\epsilon_0]$ we instead have that
\[
	p_r(x) \geq a_d r^d \frac{\Gamma(3+d/2)}{2 \pi^{d/2}} (1-4 \epsilon_0^2)^2 \geq a_d (1-4 \epsilon_0^2)^2 r^d \bar{f}(x) \geq \epsilon_0 a_d r^d \bar{f}(x).
\]

We now turn to condition \textbf{(A.3)}. First, for any $x_0 \in \mathcal{S}$ we have that $\|\dot{\eta}(x_0)\|=\|2 x_0\| = 2^{1/2} \geq \epsilon_0M_0$. For $x \in \mathcal{S}^{2 \epsilon_0}$ we have that $\| \dot{\eta}(x)\| \leq 2(2^{-1/2}+2 \epsilon_0) \leq M_0$ and $\| \ddot{\eta}(x)\|_\mathrm{op} = \|2 I\|_\mathrm{op}=2\leq M_0$. Since $\ddot{\eta}$ is constant on $\mathcal{S}^{2 \epsilon_0}$ it is trivially true that
\[
	\sup_{x,z \in \mathcal{S}^{2\epsilon_0}:\|z-x\| \leq g(\epsilon)}  \|\ddot{\eta}(z) - \ddot{\eta}(x)\|_{\mathrm{op}} \leq \epsilon
\]
for any $g \in \mathcal{G}$. Now for $x \in \mathbb{R}^d \setminus \mathcal{S}^{\epsilon_0}$ we have that
\[
	|\eta(x)-1/2| \geq 2^{1/2} \epsilon_0 - \epsilon_0^2 \geq \epsilon_0 \geq 1/\ell(\bar{f}(x)).
\]

Since the support of $\bar{f}$ is equal to $B_1(0)$, we have that $\int_{\mathbb{R}^d} \|x\|^\rho dP_X(x) \leq 1 \leq M_0$, so \textbf{(A.4)} is satisfied.

We finally check \textbf{(A.5)} to show that $P \in \mathcal{Q}_{d,2,\lambda}$ for $\lambda \geq 6 \pi^{-d/2} \Gamma(3+d/2)$. First, it is clear that $\|\bar{f}\|_\infty \leq \lambda$. Now, for any $x,y \in \mathbb{R}^d$ we have that
\[
	\|\dot{\bar{f}}(y)-\dot{\bar{f}}(x)\| \leq \|y-x\| \sup_{z \in B_1(0)} \| \ddot{\bar{f}}(z)\|_\mathrm{op} \leq 6 \pi^{-d/2} \Gamma(3+d/2) \|y-x\|.
\]

\section{Example~2 from the main text}
\label{Sec:Lower}

\begin{proof}[Proof of claim in Example~\ref{Eg:Counter}]
Fix $\epsilon > 0$ and $k \in K_\beta$, let
\[
\mathcal{T}_{n} := (0,1/2) \times \bigl((1+\epsilon)\log(n/k), \infty\bigr),
\]
and for $\gamma > 0$, let
\[
B_{k, \gamma} := \bigcap_{x = (x_1,x_2) \in \mathcal{T}_n} \{ \gamma < \|X_{(k+1)}(x) - x\| < x_2-1\}.
\]
Now, for $\epsilon \beta \log{n} > 4$ and $\gamma \in [2,\epsilon \log(n/k)/2)$, 
\[
\mathbb{P}(B_{k, \gamma}^c) \leq \mathbb{P}(T \geq k+1) + \mathbb{P}(T' \leq k),
\]
where $T \sim \mathrm{Bin}(n,p_\gamma^*)$, $T' \sim \mathrm{Bin}(n,p_*)$,
\begin{align*}
p^*_{\gamma} &:= \int_{0}^{1} \int_{(1+\epsilon)\log(n/k)-\gamma}^{\infty} t_1  \exp(-t_2) \,dt_1 dt_2 \leq \frac{1}{2} \Bigl(\frac{k}{n}\Bigr)^{1+\epsilon} e^\gamma \leq \frac{1}{2}\Bigl(\frac{k}{n}\Bigr)^{1+\epsilon/2}, \\
p_* &:= \int_{0}^{1} \int_{3-3^{1/2}}^{3+3^{1/2}} t_1  \exp(-t_2) \,dt_1 dt_2 \geq \frac{1}{8}.
\end{align*}
Therefore, there exists $n_0 \in \mathbb{N}$ such that $np_* - (k+1) \geq k/2$ and $k + 1 - np_\gamma^* \geq k/2$ for all $k \in K_{\beta}$, $\gamma \in [2,\epsilon \log(n/k)/2)$ and $n \geq n_0$.  It follows by Bernstein's inequality that $\sup_{k \in K_\beta} \sup_{\gamma \in [2,\epsilon \log(n/k)/2)} \mathbb{P}(B_{k,\gamma}^c) = O(n^{-M})$ for every $M > 0$.

Now, for $x = (x_1,x_2) \in \mathcal{T}_n$, $\epsilon \beta \log{n} > 4$ and $\gamma \in [2,x_2-1)$, we have that
\begin{align*}
\frac{ \int_{B_\gamma(x)} \eta(t) \bar{f}(t) \,dt}{ \int_{B_\gamma(x)} \bar{f}(t) \,dt} &=  \frac{\int_0^1 \int_{x_2 - \{\gamma^2 - (t_1-x_1)^2\}^{1/2}}^{x_2 + \{\gamma^2 - (t_1-x_1)^2\}^{1/2}} t_1^2e^{-t_2} \, dt_2 \, dt_1}{\int_0^1 \int_{x_2 - \{\gamma^2 - (t_1-x_1)^2\}^{1/2}}^{x_2 + \{\gamma^2 - (t_1-x_1)^2\}^{1/2}} t_1e^{-t_2} \, dt_2 \, dt_1} 
\\ & = \frac{\int_0^1 t_1^2 \sinh( \{ \gamma^2 - (t_1-x_1)^2 \}^{1/2}) \,dt_1}{\int_0^1 t_1 \sinh( \{ \gamma^2 - (t_1-x_1)^2 \}^{1/2})\,dt_1} \\
&\geq \frac{2}{3} \frac{\sinh\bigl( (\gamma^2 -1 )^{1/2}\bigr)}{\sinh(\gamma)} \geq \frac{2}{3} \frac{\sinh(3^{1/2})}{\sinh(2)} > \frac{1}{2}.
\end{align*}
Our next observation is that for $\gamma \in [0,\infty)$ and $x_{(k+1)} \in \mathbb{R}^d$ such that $\|x_{(k+1)}-x\| = \gamma$, we have that $(X_{(1)},Y_{(1)}, \ldots,X_{(k)},Y_{(k)})|(X_{(k+1)} = x_{(k+1)})\! \stackrel{d}{=} (\tilde{X}_{(1)},\tilde{Y}_{(1)},\ldots,\tilde{X}_{(k)},\tilde{Y}_{(k)})$,  where the pairs $(\tilde{X}_1,\tilde{Y}_1),\ldots,(\tilde{X}_k, \tilde{Y}_k)$ are independent and identically distributed, and then $(\tilde{X}_{(1)},\tilde{Y}_{(1)}),\ldots,(\tilde{X}_{(k)},\tilde{Y}_{(k)})$ is a reordering such that $\|\tilde{X}_{(1)} - x\| \leq \ldots \leq \|\tilde{X}_{(k)} - x\|$.  Here $\tilde{X}_1  \stackrel{d}{=} X | (\|X\!-x\| \leq \gamma)$ and $\mathbb{P}(\tilde{Y}_1=1 |\tilde{X}_1 = x) = \eta(x)$. Writing $\tilde{S}_n(x) := \frac{1}{k} \sum_{i=1}^k \mathbbm{1}_{\{\tilde{Y}_i=1\}}$ we therefore have by Hoeffding's inequality that, for $x \in \mathcal{T}_n$, $\epsilon \beta \log{n} > 4$ and $\|x_{(k+1)}-x\| \in [2,x_2-1)$,
\begin{align*}
	\mathbb{P}\{\hat{S}_n(x) < 1/2 \big| X_{(k+1)} &= x_{(k+1)} \} = \!\mathbb{P}\{ \tilde{S}_n(x) < 1/2  \} \\
	& = \mathbb{P}\{ \tilde{S}_n(x) - \mathbb{E} \tilde{S}_n(x) < -( \mathbb{E} \eta(\tilde{X}_1) - 1/2) \} \\
	& \leq \! \exp \biggl( - 2k \Bigl(\frac{2}{3} \frac{\sinh(3^{1/2})}{\sinh(2)}-\frac{1}{2} \Bigr)^2 \biggr) = O(n^{-M})
\end{align*}
for all $M>0$, uniformly for $k \in K_\beta$. Writing $P_{(k+1)}$ for the marginal distribution of $X_{(k+1)}$, we deduce that 
\begin{align*}
\mathbb{P}&\{\hat{S}_n(x) < 1/2 \} 
\\ &\leq \mathbb{P}\{\hat{S}_n(x) < 1/2, \|X_{(k+1)} - x\| \in [2,x_2-1)\} + \mathbb{P}(B_{k,2}^c) 
\\ & = \int_{B_{x_2-1}(x)\setminus B_2(x)} \!\!\!\!\!\!\!\!\!\!\!\! \mathbb{P}\{\hat{S}_n(x) < 1/2 \big| X_{(k+1)}= x_{(k+1)} \} \, dP_{(k+1)}(x_{(k+1)}) + O(n^{-M}) \\
&= O(n^{-M})
\end{align*}
for all $M>0$, uniformly for $k \in K_\beta$.  We conclude that for every $M > 0$,
\begin{align*}
&R_{\mathcal{T}_{n}}(\hat{C}_n^{k\mathrm{nn}}) - R_{\mathcal{T}_{n}}(C^{\mathrm{Bayes}}) \\
&= \int_{\mathcal{T}_{n}} \Bigl[\mathbb{P}\{\hat{S}_n(x) < 1/2\} - \mathbbm{1}_{\{\eta(x) < 1/2\}} \Bigr] \{2\eta(x)-1\}\bar{f}(x) \, dx \\
 &= \int_{(1+\epsilon)\log(n/k)}^{\infty} \int_{0}^{1/2} \mathbb{P}\{\hat{S}_n(x) \geq 1/2\} (1-2x_1) x_1 \exp(-x_2)\, dx_1 \, dx_2 \\
&= \frac{1}{24}\Bigl(\frac{k}{n}\Bigr)^{1+\epsilon} + O(n^{-M}), 
\end{align*}
uniformly for $k \in K_{\beta}$.  The claim~\eqref{Eq:Claim} follows from this together with Theorem~\ref{thm:exriskRdstandard}(ii).
\end{proof}

\section{Proof of Theorem~4}
\begin{proof}[Proof of Theorem~\ref{Thm:LowerBound}]
For an integer $q \geq 3$ and $\nu \geq 0$, define a grid on $\mathbb{R}^{d}$ by
\begin{align*}
  G_{q, \nu} := \Bigl\{\Bigl(&\gamma_{1}, \gamma_{2} + \frac{2\kappa_{2} + 1}{2q}, \gamma_{3} + \frac{2\kappa_{3} + 1}{2q},\ldots, \gamma_d +  \frac{2\kappa_{d} + 1}{2q}\Bigr): \\
  &\gamma_1,\ldots,\gamma_d \in \{1, \dots, \lceil q^{\nu} \rceil \}, \kappa_2,\ldots,\kappa_d \in \{0,1, \ldots, q-1\}\Bigr\}.
\end{align*}
Now, for $x \in \mathbb{R}^{d}$,  let $n_{q}(x)$ be the closest point to $x$ among those in $G_{q, \nu}$ (if there are multiple points, pick the one that is smallest in the lexicographic ordering).  Let $m := \lceil q^\nu \rceil^dq^{d-1}$ and define closed Euclidean balls $\mathcal{X}_{1}, \ldots, \mathcal{X}_m$ in $\mathbb{R}^{d}$ of radius $1/(2q)$, where the $l$th ball is centered at the $l$th grid point in the lexicographic ordering.

Writing $[z]$ for the closest integer to $z$ (where we round half-integers to the nearest even integer), define the `saw-tooth' function $\eta_{0}: \mathbb{R}^{d} \rightarrow [3/8,5/8]$, by $\eta_{0}(x) := 3/8 + \bigl|x_1+1/4-[x_1+1/4]\bigr|/2$, for $x = (x_1,\ldots,x_d)^T$.   Further, for $x \in \mathbb{R}^d$, set $u(x) : = \frac{\alpha_0g^{-1}(1/q)}{q^{2}}\bigl(1/4 - q^2\|x - n_{q}(x)\|^2\bigr)^4$, where $\alpha_0 := 1/27$.

For $\sigma := (\sigma_{1}, \ldots, \sigma_{m})^T \in \{-1,1\}^{m}$, we now define the distribution $P_\sigma$ on $\mathbb{R}^d \times \{0,1\}$ by setting the regression function to be $\eta_{\sigma}(x) := \eta_{0}(x) + \frac{1}{2}\sigma_{l} u(x)$, for $x \in \mathcal{X}_{l}$, $l = 1, \ldots, m$, and setting $\eta_{\sigma}(x) := \eta_{0}(x)$, otherwise.  To define the marginal distribution on $\mathbb{R}^d$ induced by $P_\sigma$, which will be the same for each $\sigma$, we first define the boxes $B_0 := (0, \lceil q^{\nu} \rceil +3/2)^d$ and $B_r := [-r/2+1/4 - a/16,-r/2+1/4+a/16] \times [-a,a]^{d-1}$ for $r =1,\ldots,20$ and some $a > 0$ to be chosen later.  We further define a modified bump function by
\[
  h(x) := \left\{ \begin{array}{ll} 0 & \mbox{if $x \leq 0$} \\
                    \Phi\bigl(\frac{2x-1}{x(1-x)}\bigr) & \mbox{if $x \in (0,1)$} \\
                    1 & \mbox{if $x \geq 1$,} \end{array} \right.
              \]
where $\Phi$ denotes the standard normal distribution function.  For $x \in \mathbb{R}^d$ we then set
              \[
                \bar{f}(x) := w_0 h\bigl(1 - 4\mathrm{dist}(x,B_0)\bigr) + h\Bigl(1 - 16 \min_{r =1,\ldots,20}\mathrm{dist}(x,B_r)\Bigr)
              \]
for some $w_{0} < 1/(\lceil q^{\nu} \rceil+2)^d$ to be specified later.   Here, $a$ in the definition of $B_r$ is chosen such that $\int_{\mathbb{R}^d} \bar{f} = 1$, and we note that
              \[
                1 \geq 20\frac{a}{8}(2a)^{d-1} = \frac{5}{4} (2a)^d,
              \]
so $a \leq (4/5)^{1/d}/2$.              

Let 
\[
\mathcal{P}_{m} := \Bigl \{P_{\sigma}: \sigma := (\sigma_{1}, \ldots, \sigma_{m}) \in \{-1,1\}^{m} \Bigr\}.
\]
We show below that $\mathcal{P}_{m} \subseteq \mathcal{P}_{d,\theta} \cap \mathcal{Q}_{d,2,\lambda}$ for all $\theta \in \Theta$ and $\lambda > 0$ satisfying the conditions of the theorem.

Letting $\mathbb{E}_{\sigma}$ denote expectation with respect to $P_{\sigma}^{\otimes n}$ and writing $[[x_1]] := x_1 - [x_1+1/4]$ for $x_1 \in \mathbb{R}$, we have that, for any classifier $C_n$,
\begin{align*} 
&\sup_{P \in \mathcal{P}_{d, \theta} \cap \mathcal{Q}_{d,2,\lambda}} \{R(C_{n})  - R(C^{\mathrm{Bayes}})\} \geq \max_{P \in \mathcal{P}_m} \{R(C_{n})  - R(C^{\mathrm{Bayes}})\}  
\\ & = \max_{\sigma \in \{-1, 1\}^{m}} \int_{\mathbb{R}^{d}} \mathbb{E}_{\sigma} \{ \mathbbm{1}_{\{C_{n}(x) = 0\}} - \mathbbm{1}_{\{\eta_{\sigma}(x) < 1/2 \}} \} \{2\eta_{\sigma}(x) - 1\} \, dP_{X}(x) 
  \\ & \geq \max_{\sigma \in \{-1, 1\}^{m}} \sum_{l =1}^{m} \int_{\mathcal{X}_{l}} \mathbb{E}_{\sigma}\{ \mathbbm{1}_{\{C_{n}(x) = 0\}} - \mathbbm{1}_{\{\eta_{\sigma}(x) < 1/2 \}} \} \bigl\{[[x_1]] + \sigma_{l} u(x)\bigr\} \, dP_{X}(x) \\
  &\geq \frac{1}{2^m}\! \! \!\sum_{\sigma \in \{-1,1\}^m} \sum_{l =1}^{m} \int_{\mathcal{X}_{l}} \! \! \mathbb{E}_{\sigma}\{ \mathbbm{1}_{\{C_{n}(x) = 0\}} \!-\! \mathbbm{1}_{\{\eta_{\sigma}(x) < 1/2 \}} \} \bigl\{[[x_1]] \!+\! \sigma_{l} u(x)\bigr\} \, dP_{X}(x).
\end{align*} 
Now let $\sigma_{l,r} := (\sigma_{1}, \ldots, \sigma_{l-1}, r, \sigma_{l+1}, \ldots, \sigma_{m})$ for $l = 1, \ldots, m$, and $r \in \{-1,0,1\}$, and define the distribution $P_{l,r}$ on $\mathbb{R}^d \times \{0,1\}$ by $\eta_{l,r}(x) := \eta_{0}(x) + (1/2)ru(x)$, for $x \in \mathcal{X}_{l}$ and $\eta_{l,r}(x) = \eta_{\sigma_{l,r}}(x) := \eta_{\sigma}(x)$ otherwise (the marginal distribution on $\mathbb{R}^d$ is again taken to be $P_{X}$).  We write $\mathbb{E}_{l,r}$ to denote expectation with respect to $P_{l,r}^{\otimes n}$.

For $l = 1, \ldots, m$ and $r \in \{-1,1\}$ define 
\[
L_{l,r} := \frac{\prod_{i = 1}^{n} [Y_{i} \eta_{l,r}(X_{i}) + (1 - Y_{i}) \{1 - \eta_{l,r}(X_{i})\}] }{\prod_{i = 1}^{n} [Y_{i} \eta_{l,0}(X_{i}) + (1 - Y_{i}) \{1 - \eta_{l,0}(X_{i})\}] }.
\]
By the Radon--Nikodym theorem, we have that
\begin{align*}
  & \frac{1}{2^m}\sum_{\sigma \in \{-1,1\}^m} \sum_{l =1}^{m} \int_{\mathcal{X}_{l}} \mathbb{E}_{\sigma}\{ \mathbbm{1}_{\{C_{n}(x) = 0\}} - \mathbbm{1}_{\{\eta_{\sigma}(x) < 1/2 \}} \} \bigl\{[[x_1]] + \sigma_{l} u(x)\bigr\} \, dP_{X}(x)
  \\ &=\frac{1}{2}\sum_{l =1}^{m} \mathbb{E}_{l,0}\biggl(\int_{\mathcal{X}_{l}}\bigl[L_{l,1}\{ \mathbbm{1}_{\{C_{n}(x) = 0\}} - \mathbbm{1}_{\{\eta_{l,1}(x) < 1/2 \}} \}\bigr]\bigl\{[[x_1]] + u(x)\bigr\} \, dP_{X}(x) \\
  &\hspace{40pt} + \int_{\mathcal{X}_{l}}\bigl[L_{l,-1}\{ \mathbbm{1}_{\{C_{n}(x) = 0\}} - \mathbbm{1}_{\{\eta_{l,-1}(x) < 1/2 \}} \}\bigr]\bigl\{[[x_1]] - u(x)\bigr\} \, dP_{X}(x)\biggr) \\
 &\geq \frac{1}{2}\sum_{l =1}^{m} \mathbb{E}_{l,0}\biggl\{\biggl(\int_{\mathcal{X}_{l}}\{ \mathbbm{1}_{\{C_{n}(x) = 0\}} - \mathbbm{1}_{\{\eta_{l,1}(x) < 1/2 \}} \}\bigl\{[[x_1]] + u(x)\bigr\} \, dP_{X}(x) \\
  &\hspace{5pt} + \int_{\mathcal{X}_{l}}\!\!\{ \mathbbm{1}_{\{C_{n}(x) = 0\}} \!-\! \mathbbm{1}_{\{\eta_{l,-1}(x) < 1/2 \}}\}\bigl\{[[x_1]] \!-\! u(x)\bigr\} \, dP_{X}(x)\!\biggr)\!\min(L_{l,1},L_{l,-1})\biggr\}.
\end{align*} 
Now fix $x = (x_1,\ldots,x_d)^T \in \mathcal{X}_l$, and writing $C_n = C_n(x), \eta_{l,1} = \eta_{l,1}(x)$ and $\eta_{l,-1} = \eta_{l,-1}(x)$ as shorthand, observe that 
\begin{align*}
  & \{ \mathbbm{1}_{\{C_{n} = 0\}} - \mathbbm{1}_{\{\eta_{l,1} < 1/2 \}} \}\bigl\{[[x_1]] + u(x)\bigr\} \\
  &\hspace{5cm}+ \{ \mathbbm{1}_{\{C_{n} = 0\}} - \mathbbm{1}_{\{\eta_{l,-1} < 1/2 \}} \}\bigl\{[[x_1]] - u(x)\bigr\}  
\\ &= 2\Bigl\{ \mathbbm{1}_{\{C_{n} = 0,\eta_{l,1} \geq 1/2,\eta_{l,-1} \geq 1/2 \}} 
-  \mathbbm{1}_{\{C_{n} = 1,\eta_{l,1} < 1/2,\eta_{l,-1} < 1/2 \}} \Bigr\} [[x_1]] 
\\ & \hspace{20 pt}+ \Bigl\{\mathbbm{1}_{\{C_{n} = 0,\eta_{l,1} \geq 1/2,\eta_{l,-1} < 1/2 \}} - \mathbbm{1}_{\{C_{n} = 1,\eta_{l,1} < 1/2,\eta_{l,-1} \geq 1/2 \}} \Bigr\}  \{[[x_1]] + u(x)\}  
  \\ & \hspace{20 pt} +  \Bigl\{\mathbbm{1}_{\{C_{n} = 0,\eta_{l,1} < 1/2,\eta_{l,-1} \geq 1/2 \}} -  \mathbbm{1}_{\{C_{n} = 1,\eta_{l,1} \geq 1/2,\eta_{l,-1} < 1/2 \}}\Bigr\} \{[[x_1]] - u(x)\}
\\ &= 2\Bigl\{ \mathbbm{1}_{\{C_{n}=0,\eta_{l,1}\geq 1/2,\eta_{l,-1} \geq 1/2 \}} -  \mathbbm{1}_{\{C_{n} = 1,\eta_{l,1} < 1/2,\eta_{l,-1} < 1/2 \}}\Bigr\}[[x_1]]
\\ & \hspace{20 pt}+ \mathbbm{1}_{\{\eta_{l,1} \geq 1/2,\eta_{l,-1} < 1/2\}}\bigl[\mathbbm{1}_{\{C_n=0\}}\{[[x_1]] + u(x)\} - \mathbbm{1}_{\{C_{n} = 1\}} \{[[x_1]] - u(x)\}\bigr]  \\     
 &\geq \mathbbm{1}_{\{\eta_{l,1} \geq 1/2,\eta_{l,-1} < 1/2\}}\bigl\{u(x) - \bigl|[[x_1]]\bigr|\bigr\}.
\end{align*} 
Here we used the fact that $\eta_{l, 1}(x) \geq \eta_{l, -1}(x)$, so $\mathbbm{1}_{\{\eta_{l,1}(x) < 1/2,\eta_{l,-1}(x) \geq 1/2 \}}  = 0$, and that the minimum is attained by taking $C_{n}(x) = \mathbbm{1}_{\{[[x_1]] \geq 0\}}$ for $x \in \mathcal{X}_l$; it is interesting to note that this remains the optimal classifier even if $\bar{f}$ is known.  Moreover, whenever $[[x_1]] \geq 0$, we have $\eta_{l,1}(x) \geq 1/2$, and when $[[x_1]] < 0$, we have $\eta_{l,-1}(x) < 1/2$.  It follows that
\begin{align}
  \label{Eq:LowerBdDecomp}
  &\sup_{P \in \mathcal{P}_{d, \theta} } \{R(C_{n})  - R(C^{\mathrm{Bayes}})\} \nonumber \\
  &\geq \frac{1}{2}\sum_{l =1}^{m} \mathbb{E}_{l,0}\biggl\{\min(L_{l,1},L_{l,-1})\! \int_{\mathcal{X}_l} \! \! \! \! \mathbbm{1}_{\{\eta_{l,1} \geq 1/2,\eta_{l,-1} < 1/2 \}}\bigl\{u(x) \!-\! \bigl|[[x_1]]\bigr|\bigr\} \, dP_X(x)\biggr\} \nonumber \\
                                        & = \sum_{l =1}^{m} \mathbb{E}_{l,0}\bigl\{\min(L_{l,1},L_{l,-1})\bigr\}\int_{\mathcal{X}_l \cap \{[[x_1]] \geq 0\}} \! \! \mathbbm{1}_{\{\eta_{l,-1} < 1/2 \}}\bigl\{u(x) - [[x_1]]\bigr\} \, dP_X(x) \nonumber \\
  &= m w_{0}\mathbb{E}_{1,0}\bigl\{\min(L_{1,1},L_{1,-1})\bigr\}\int_{B_{1/(2q)}(0) \cap \{x_{1} \geq 0\}}\mathbbm{1}_{\{\tilde{\eta}(x) < 1/2 \}} \{\tilde{u}(x) - x_{1}\}  \, dx,
\end{align}
where $\tilde{u}(x) := \alpha_0g^{-1}(1/q) q^{6} (\frac{1}{4q^{2}} - \|x\|^2)^{4}$ and $\tilde{\eta}(x) := \frac{1}{2}\{1 + x_{1}  - \tilde{u}(x) \}$.

Now, observe that 
\[
\mathbb{E}_{1,0}\bigl\{\min(L_{1,1},L_{1,-1})\bigr\} =  1 - d_{\mathrm{TV}}(P_{1,1}^{\otimes n}, P_{1,-1}^{\otimes n}) 
\]
and 
\[
d_{\mathrm{TV}}^{2}(P_{1,1}^{\otimes n}, P_{1,-1}^{\otimes n}) \leq \frac{1}{2}d_{\mathrm{KL}}^2(P_{1,1}^{\otimes n}, P_{1,-1}^{\otimes n}) = \frac{n}{2}d_{\mathrm{KL}}^2(P_{1,1}, P_{1,-1}).
\] 
Moreover, using the fact that $\log(1+ x) \leq x$ for $x \geq 0$, we have that
\begin{align*}
  d_{\mathrm{KL}}^2&(P_{1,1}, P_{1,-1}) \\
  &=  \int_{\mathbb{R}^{d}} \eta_{1,1}(x) \log \biggl(\frac{\eta_{1,1}(x)}{\eta_{1,-1}(x)}\biggr)  + \{1- \eta_{1,1}(x)\} \log\biggl(\frac{1 - \eta_{1,1}(x)}{1 - \eta_{1,-1}(x)} \biggr)\, dP_X(x) 
\\ & \leq 24 \int_{\mathcal{X}_{1}} u^{2}(x) \, dP_X(x) 
\\ & = 24 \alpha_0^2w_{0} q^{12} g^{-1}(1/q)^{2} \int_{B_{1/(2q)}(0)} \Bigl(\frac{1}{4q^{2}} - \|x\|^2\Bigr)^{8} \, dx
\\ & = \frac{945 \alpha_0^2a_{d} w_{0} g^{-1}(1/q)^{2}\Gamma(1+d/2)}{2^{d+6}\Gamma(9+d/2)} q^{-(4+d)}.
\end{align*} 
We now turn to finding a lower bound for the integral in~\eqref{Eq:LowerBdDecomp}.  First, we observe that $\mathrm{sgn}\bigl(\tilde{u}(x) - x_1\bigr) = \mathrm{sgn}\bigl(1/2 - \tilde{\eta}(x)\bigr)$, and moreover for $d =1$ and $0 \leq x_1 < \frac{\alpha_0g^{-1}(1/q)}{2^{13} q^{2}}$, we have that 
\begin{align*}
  \tilde{u}(x_1) - x_1 = q^{6} \alpha_0g^{-1}(1/q) \biggl(\frac{1}{4q^{2}} - x_1^2\biggr)^{4} - x_1 &> \frac{\alpha_0g^{-1}(1/q)}{2^{12}q^{2}} - x_1 \\
  &> \frac{\alpha_0g^{-1}(1/q)}{2^{13}q^{2}}> 0.
\end{align*}
Thus
\begin{align*} 
  \int_{0}^{1/(2q)} \mathbbm{1}_{\{\tilde{\eta} (x_1) < 1/2 \}}\{\tilde{u}(x_1) - x_1\} \, dx_1 &\geq \frac{\alpha_0^2g^{-1}(1/q)^2}{2^{26}q^{4}}.
\end{align*}
Furthermore, for $d \geq 2$, writing $x_{-1}:= (x_{2}, \ldots, x_{d})^T$, we have that $\tilde{\eta}(x) < 1/2$ if and only if
\[
0 >  x_{1}  -  q^{6}\alpha_0g^{-1}(1/q)\Bigl(\frac{1}{4q^{2}} - \|x\|^2\Bigr)^{4}\]
which is satisfied if 
\[
\|x_{-1}\| < (1-2^{-1/4})^{1/2}\sqrt{\frac{1}{4q^{2}} - \Bigl( \frac{x_{1}}{ q^{6} \alpha_0g^{-1}(1/q) }\Bigr)^{1/4} - x_{1}^{2}} \, \, =: t(x_{1}).
\]
Now $t(x_{1})$ is real if $0 \leq x_{1} \leq \frac{\alpha_0g^{-1}(1/q)}{2^{14} q^{2}}$.  Moreover, $t(x_{1}) > 1/(8q)$ for $x_{1} \in \bigl[0, \frac{\alpha_0 g^{-1}(1/q)}{2^{14} q^2}\bigr]$.
We also require the observation that $\tilde{u}(x) - x_1 \geq \frac{\alpha_0 g^{-1}(1/q)}{2^{14}q^2}$ when $x_{1} \in \bigl[0, \frac{\alpha_0 g^{-1}(1/q)}{2^{14} q^2}\bigr]$ and $\|x_{-1}\| < t(x_1)$.  Hence
\begin{align*} 
  \int_{B_{1/(2q)}(0) \cap \{x_{1} \geq 0\}}&\mathbbm{1}_{\{\tilde{\eta}(x) < 1/2 \}} \{\tilde{u}(x) - x_{1}\}  \, dx  \\
  &\geq \int_{0}^{\frac{\alpha_0 g^{-1}(1/q)}{2^{14} q^{2}}} \int_{\|x_{-1}\| < t(x_{1})} \{\tilde{u}(x) - x_{1}\} \, dx_{-1} \, dx_{1} \\
                                                                                                                     &\geq \frac{\alpha_0g^{-1}(1/q)}{2^{14}q^2}a_{d-1}\int_{0}^{\frac{\alpha_0 g^{-1}(1/q)}{2^{14} q^{2}}} t(x_1)^{d-1} \, dx_1 \\
                                            &\geq \frac{\alpha_0^2g^{-1}(1/q)^2}{2^{28}q^{3+d}}a_{d-1}2^{-3(d-1)}.
\end{align*} 
We have therefore shown that, for $q \geq 3$, 
\begin{align*}
  &\sup_{P \in \mathcal{P}_{d, \theta} } \{R(C_{n})  - R(C^{\mathrm{Bayes}})\}  \\
  &\hspace{1cm}\geq \frac{m w_{0} a_{d-1}\alpha_0^2g^{-1}(1/q)^2}{2^{28 + 3(d-1)} q^{3+d}} \Biggl\{1 - \sqrt{\frac{n 945 a_{d} w_{0} \alpha_0^2g^{-1}(1/q)^{2} \Gamma(1+d/2)}{2^{d+6} \Gamma(9 + d/2) q^{4+d}}}\Biggr\},
\end{align*}
where $a_0 := 1$.  It follows that if we set
\[
w_{0} = \frac{q^{4+d}}{4^{d+1}ng^{-1}(1/q)^{2}},
\]
and choose $q$ to satisfy $\frac{q^{4+d+\nu(\rho + d)}}{g^{-1}(1/q)^{2}} = n$, then 
\begin{align*}
  \sup_{P \in \mathcal{P}_{d, \theta} }  \{R(C_{n})  - R(C^{\mathrm{Bayes}})\} &\geq \frac{ q^{d + \nu d} }{n} \frac{a_{d-1}\alpha_0^2}{2^{28 + 5d} } \\
  &= g^{-1}(1/q)^{\frac{2d(1+\nu)}{4+d+\nu(\rho + d)}}n^{-\frac{4 + \nu\rho}{4+d + \nu(\rho+d)}} \frac{a_{d-1}\alpha_0^2}{2^{28 + 5d} }.
\end{align*}

It remains to show that $P_{\sigma}$ belongs to the desired classes $\mathcal{P}_{d, \theta} \cap \mathcal{Q}_{d,2,\lambda}$ for each $\sigma$.  First note that
\[
  w_{0} = \frac{q^{4+d}}{4^{d+1}ng^{-1}(1/q)^{2}} = \frac{1}{4^{d+1}}q^{- \nu(\rho+d)} < \frac{1}{(\lceil q^\nu \rceil + 2)^d}.
\]
Condition~\textbf{(A.1)} is satisfied by $\bar{f}$ by construction.  To verify the minimal mass assumption, we take $\epsilon_* < 2^{-\max(d,5)}$, and observe that when $\epsilon_0 \in (0,\epsilon_*]$,
\begin{align*}
  &\inf_{r_0 \in (0,\epsilon_0],x \in \mathbb{R}^d:\bar{f}(x) > 0} \frac{1}{a_dr_0^d\bar{f}(x)} \int_{B_{r_0}(x)} \bar{f} \\
  &\hspace{2cm}\geq \inf_{r_0 \in (0,\epsilon_0]} \frac{1}{a_dr_0^d} \int_{B_{r_0}(0)} \Phi\biggl(\frac{1 - 32\|x\|}{16\|x\|(1-16\|x\|)}\biggr) \, dx \wedge 2^{-d} \geq 2^{-d},
\end{align*}
as required.  It follows that \textbf{(A.2)} is satisfied for such $\epsilon_0 \in (0,\epsilon_*]$ and for any $M_0 \geq 1$.

The main condition to check is \textbf{(A.3)}.   
For $x \in B_{1/(2q)}(0)$, consider
\[
  \tilde{\eta}_{\pm}(x) := (1/2)\biggl\{1 + x_{1} \pm q^{6} \alpha_0g^{-1}(1/q) \biggl(\frac{1}{4q^{2}} - \|x\|^2 \biggr)^{4}\biggr\}.
\]
Then
\[
\dot{\tilde{\eta}}_{\pm}(x) =   (1/2, 0, \ldots, 0)^T \mp 8 q^{6} \alpha_0g^{-1}(1/q) \biggl(\frac{1}{4q^{2}}-\|x\|^2 \biggr)^{3}x, 
\]
and
\begin{align*}
  \ddot{\tilde{\eta}}_{\pm}(x) &=  \mp 8 q^{6} \alpha_0g^{-1}(1/q)  \biggl(\frac{1}{4q^{2}} - \|x\|^{2}\biggr)^{3} I_{d \times d} \\
  &\hspace{2cm}\pm 48 q^{6} \alpha_0g^{-1}(1/q) \biggl(\frac{1}{4q^{2}} - \|x\|^2 \biggr)^{2}xx^{T}.
\end{align*}
From these calculations, we see that each $\eta_\sigma$ is twice continuously differentiable on $\mathcal{S}^{2\epsilon_0}$, with $\|\dot{\eta}_\sigma(x)\| \in (1/4,3/4)$ for all $x \in \mathcal{S}^{2\epsilon_0}$ and $\|\ddot{\eta}_\sigma(x)\|_{\mathrm{op}} \leq 1$.  We have that, when $n_q(z) = n_q(x)$, 
\begin{align}
  \label{Eq:SameBalls}
  &\|\ddot{\eta}_\sigma(z) - \ddot{\eta}_\sigma(x)\|_{\mathrm{op}} \nonumber \\
   &\leq 8 q^{6} \alpha_0g^{-1}(1/q)\biggl|\biggl(\frac{1}{4q^{2}} - \|z\|^{2}\biggr)^{3} - \biggl(\frac{1}{4q^{2}} - \|x\|^{2}\biggr)^{3}\biggr| \nonumber \\
  &\hspace{1cm}+ 48 q^{6} \alpha_0g^{-1}(1/q)\biggl\|\biggl(\frac{1}{4q^{2}} - \|z\|^2 \biggr)^{2}zz^T - \biggl(\frac{1}{4q^{2}} - \|x\|^2 \biggr)^{2}xx^T\biggr\| \nonumber \\
  &= 8 q^{6} \alpha_0g^{-1}(1/q)(\|z\|+\|x\|)\bigl|\|z\|-\|x\|\bigr|\biggl\{\Bigl(\frac{1}{4q^{2}} \!-\! \|z\|^{2}\Bigr)^2 \nonumber \\
  &\hspace{3cm}+ \Bigl(\frac{1}{4q^{2}} \!-\! \|z\|^{2}\Bigr)\Bigl(\frac{1}{4q^{2}} \!-\! \|x\|^{2}\Bigr) + \Bigl(\frac{1}{4q^{2}} \!-\! \|x\|^{2}\Bigr)^2\biggr\} \nonumber \\
    &\hspace{1cm}+ 48 q^{6} \alpha_0g^{-1}(1/q)\biggl\|\Bigl(\frac{1}{4q^{2}} \!-\! \|z\|^2 \Bigr)^{2}\Bigl\{(z\!-\!x)(z\!-\!x)^T \nonumber \\
    &\hspace{1cm}+ x(z\!-\!x)^T \!+\! (z\!-\!x)x^T\Bigr\} + (\|x\|^2 \!-\! \|z\|^2)\Bigl\{\frac{1}{2q^2}-\|x\|^2-\|z\|^2\Bigr\}xx^T\biggr\| \nonumber \\
  &\leq \frac{1}{2}qg^{-1}(1/q)\|z-x\|.
\end{align}
Hence, using the fact that $r \mapsto r/g^{-1}(r)$ is increasing for sufficiently small $r > 0$, we have that for sufficiently large $q$, 
\[
\sup_{\|x - z\| \leq g(\epsilon),n_q(x)=n_q(z)} \|\ddot{\eta}_\sigma(x) - \ddot{\eta}_\sigma(z)\|_{\mathrm{op}} \leq \epsilon.
\]
Now consider the case where $z \in \mathcal{X}_l$ and $x \in \mathcal{X}_{l'}$ with $l \neq l'$, so that $n_q(z) \neq n_q(x)$.  Let $z'$ denote the closest point in $\mathcal{X}_l$ to $\mathcal{X}_{l'}$ on the line segment joining $x$ to $z$, and similarly let $x'$ denote the closest point in $\mathcal{X}_{l'}$ to $\mathcal{X}_{l}$ on the same line segment.  Then $\ddot{\eta}_\sigma(x') = \ddot{\eta}_\sigma(z') = 0$, so, by~\eqref{Eq:SameBalls},
\begin{align*}
  &\|\ddot{\eta}_\sigma(z) - \ddot{\eta}_\sigma(x)\|_{\mathrm{op}} \leq \|\ddot{\eta}_\sigma(z) - \ddot{\eta}_\sigma(z')\|_{\mathrm{op}} + \|\ddot{\eta}_\sigma(x') - \ddot{\eta}_\sigma(x)\|_{\mathrm{op}} \\
& \hspace{10pt} \leq \frac{1}{2} qg^{-1}(1/q)(\|z-z'\| + \|x-x'\|) \leq \frac{1}{2}\bigl\{g^{-1}(\|z-z'\|) + g^{-1}(\|x-x'\|)\bigr\}.
\end{align*}
We therefore deduce that
\[
\sup_{\|x - z\| \leq g(\epsilon)} \|\ddot{\eta}_\sigma(x) - \ddot{\eta}_\sigma(z)\|_{\mathrm{op}} \leq \epsilon.
\]
For the final part of \textbf{(A.3)}, we note that
\[
  \inf_{x \in (\epsilon_0 \pm \mathbb{Z}/2) \times \mathbb{R}^{d-1}} \Bigl|\eta_\sigma(x) - \frac{1}{2}\Bigr| \geq \frac{\epsilon_0}{2}.
\]
Finally, we check the moment condition in \textbf{(A.4)}.  First, 
\begin{align*} 
 & \int_{\mathbb{R}^d} \|x\|^\rho \bar{f}(x) \, dx = w_{0} \int_{x:\mathrm{dist}(x,B_0)\leq 1/4} \|x\|^{\rho}h\Bigl(4\bigl(1 - \mathrm{dist}(x,B_0)\bigr)\Bigr) \, dx \\
  &\hspace{18pt}+ \int_{[-10,-1] \times [-a-1/16,a+1/16]^{d-1}} \|x\|^{\rho}h\biggl(16\Bigl(1 - \min_{r=1,\ldots,20} \mathrm{dist}(x,B_r)\Bigr)\biggr) \, dx \\
& \leq w_0d^{\frac{\rho}{2}}(\lceil q^\nu \rceil+2)^{d+\rho} + \max(1,2^{\frac{\rho-2}{2}})\{100^{\frac{\rho}{2}} + (d-1)^{\frac{\rho}{2}}(a+1/16)^{\frac{\rho}{2}}\} \\
&\leq \frac{3^{d+\rho}d^{\frac{\rho}{2}}}{4^{d+1}}+ \max(1,2^{(\rho-2)/2})\{100^{\frac{\rho}{2}} + (d-1)^{\frac{\rho}{2}}(a+1/16)^{\frac{\rho}{2}}\} =: M_{01}(\rho),
\end{align*} 
say.  We conclude that there exists $q_* = q_*(d)$ such that for $q \geq q_*$ and any $\nu \geq 0$, we have $P \in \mathcal{P}_{d,\theta}$ for $\theta = (\epsilon_0,M_0,\rho,\ell,g)$ with any $\rho > 0$, $M_0 \geq \max(M_{01}(\rho),1)$, $\epsilon_0 \in (0,\min(2^{-\max(d,5)},1/(4M_0)))$, any $\ell \in \mathcal{L}$ with $\ell \geq 2/\epsilon_0$ and any $g \in \mathcal{G}$.  

Finally, we note that $\|\bar{f}\|_\infty \leq 1$ and
\begin{align*}
  \|\ddot{\bar{f}}\|_\infty &\leq 2^8 \! \! \sup_{x \in (0,1)} \phi\biggl(\frac{2x-1}{x(1-x)}\biggr)\biggl\{\! \frac{2}{(1-x)^3} \!-\! \frac{2}{x^3} \!-\! \frac{2x-1}{x(1-x)}\Bigl(\frac{1}{(1-x)^2} + \frac{1}{x^2}\Bigr) \! \biggr\} \\
	&\leq 2^{10} \times 5.
\end{align*}
Hence $P \in \mathcal{Q}_{d,2,\lambda}$ for $\lambda \geq 2^{10} \times 5$.
\end{proof}

\section{Proof of Theorem~5 (continued)}
\label{sec:step7}

\begin{proof}[Proof of Theorem~\ref{thm:exriskRtn} -- \textbf{Step 7}]
To complete the proof of Theorem~\ref{thm:exriskRtn}, it remains to bound the error terms $R_1, R_2, R_5$ and $R_6$.

\textit{To bound $R_1$}: We have
\begin{align*}
R_{1} = \frac{1}{k_{\mathrm{L}}}  \sum_{i=1}^{k_{\mathrm{L}}}\biggl(\mathbb{E}\eta(X_{(i)})  - \eta(x) & - \mathbb{E}\{(X_{(i)} - x)^T\dot{\eta}(x)\}  
\\ & - \frac{1}{2}\mathbb{E}\{(X_{(i)} - x)^T\ddot{\eta}(x)(X_{(i)} - x)\}\biggr). 
\end{align*}
By a Taylor expansion and \textbf{(A.3)}, for all $\epsilon \in (0,1)$, $x \in \mathcal{S}^{\epsilon_0}$ and $\|z-x\| < \min\{g(\epsilon),\epsilon_0\} =: r$,
\[
\biggl|\eta(z) - \eta(x) - (z - x)^T\dot{\eta}(x)  -  \frac{1}{2} (z - x)^T\ddot{\eta}(x)(z - x)\biggr| \leq \epsilon \|z-x\|^2.
\]
Hence
\begin{align}
\label{Eq:R1Main} 
|R_{1}| & \leq \epsilon \frac{1}{k_{\mathrm{L}}} \sum_{i=1}^{k_{\mathrm{L}}}\mathbb{E}\{\|X_{(i)} - x\|^2 \mathbbm{1}_{\{\|X_{(k_{\mathrm{L}})} - x\| \leq r\}} \} +2\mathbb{P}{\{\|X_{(k_{\mathrm{L}})} - x\| > r\}} \nonumber 
\\ & \hspace{25 pt} + \sup_{z \in \mathcal{S}^{\epsilon_0}}\|\dot{\eta}(z)\| \mathbb{E}\{\|X_{(k_{\mathrm{L}})} - x\| \mathbbm{1}_{\{\|X_{(k_{\mathrm{L}})} - x\| > r\}} \} \nonumber
\\ & \hspace {50 pt} +  \sup_{z \in \mathcal{S}^{\epsilon_0}}\|\ddot{\eta}(z)\|_{\mathrm{op}} \mathbb{E}\{\|X_{(k_{\mathrm{L}})} - x\|^2 \mathbbm{1}_{\{\|X_{(k_{\mathrm{L}})} - x\| > r\}} \}.
\end{align} 
Now, by similar arguments to those leading to~\eqref{eq:biasexpd}, we have that
\begin{align}
\label{Eq:R1Main2}
\frac{\epsilon}{k_{\mathrm{L}}} &\sum_{i=1}^{k_{\mathrm{L}}}\mathbb{E}(\|X_{(i)} - x\|^2  \mathbbm{1}_{\{\|X_{(k_{\mathrm{L}})} - x\| \leq r\}}) = \epsilon \Bigl(\frac{k_{\mathrm{L}}}{na_{d}\bar{f}(x)}\Bigr)^{2/d} \frac{d}{d+2} \{1+o(1)\},
\end{align} 
uniformly for $P \in \mathcal{P}_{d,\theta}$, $k_{\mathrm{L}} \in K_{\beta,\tau}$, $x_0 \in \mathcal{S}_n$ and $|t|<\epsilon_n$.  Moreover, for every $M > 0$,
\begin{equation}
\label{Eq:R1Prob}
\mathbb{P}\{\|X_{(k_{\mathrm{L}})} - x\| > r\} = q_r^n(k_{\mathrm{L}}) = O(n^{-M}),
\end{equation}
uniformly for $P \in \mathcal{P}_{d,\theta}$, $k_{\mathrm{L}} \in K_{\beta,\tau}$, $x_0 \in \mathcal{S}_n$ and $|t|<\epsilon_n$, by~\eqref{eq:binbound2} in Step~1.  For the remaining terms, note that
\begin{align}
\label{Eq:R12ndMoment}
\mathbb{E}\{\|X_{(k_{\mathrm{L}})} & - x\|^2 \mathbbm{1}_{\{\|X_{(k_{\mathrm{L}})} - x\| > r\}} \}  \nonumber
\\ &= \mathbb{P}\{\|X_{(k_{\mathrm{L}})} - x\| > r\} + \int_{r^2}^{\infty} \mathbb{P}\{\|X_{(k_{\mathrm{L}})} - x\| > \sqrt{t}\} \, dt \nonumber
\\ & = q_{r}^{n}(k_{\mathrm{L}}) +  \int_{r^2}^{\infty} q_{\sqrt{t}}^n(k_{\mathrm{L}}) \, dt.
\end{align}
Let $t_0 = t_0(x) := 5^{2/\rho}(1+2^{\rho-1})^{2/\rho}\bigl(M_0 + \|x\|^\rho\bigr)^{2/\rho}$.  Then, for $t \geq t_0$, we have 
\[
1 - p_{\sqrt{t}} \leq (1+2^{\rho-1})\frac{\mathbb{E}(\|X\|^\rho) + \|x\|^\rho}{t^{\rho/2}} \leq \frac{1}{5}.
\]
It follows by Bennett's inequality that for $\rho\{n-(n-1)^{1-\beta}\} > 4$,
\begin{align*}
& \int_{t_0}^\infty q_{\sqrt{t}}^n(k_{\mathrm{L}}) \, dt 
\\ &\leq e^{k_{\mathrm{L}}}(1+2^{\rho-1})^{(n-k_{\mathrm{L}})/2}\bigl\{M_0 + \|x\|^\rho\bigr\}^{(n-k_{\mathrm{L}})/2} \int_{t_0}^\infty t^{-\rho(n-k_{\mathrm{L}})/4} \, dt \\
&= \frac{4e^{k_{\mathrm{L}}}5^{2/\rho}}{\rho(n-k_{\mathrm{L}}) - 4}(1+2^{\rho-1})^{2/\rho}\bigl\{M_0 + \|x\|^\rho\bigr\}^{2/\rho}5^{-(n-k_{\mathrm{L}})/2}.
\end{align*}
But, when $\beta \log(n-1) \geq (d+2)/d$ and $n \geq \max\{n_0, n_2\}$,
\[
\sup_{x \in \mathcal{R}_n \cup \mathcal{S}_n^{\epsilon_n}}  \|x\| \leq \epsilon_0 + \biggl\{\frac{(n-1)^{1-\beta} c_n^dM_0}{\mu_0 \beta^{d/2} \log^{d/2}(n-1)}\biggr\}^{1/\rho}.
\]
We deduce that for every $M > 0$,
\begin{equation}
\label{Eq:R12ndMoment1}
\sup_{P \in \mathcal{P}_{d,\theta}} \sup_{k \in K_{\beta,\tau}} \sup_{x \in \mathcal{R}_n \cup \mathcal{S}_n^{\epsilon_n}} \int_{t_0}^\infty q_{\sqrt{t}}^n(k_{\mathrm{L}}) \, dt = O(n^{-M}).
\end{equation}
Moreover, by Bernstein's inequality, for every $M > 0$,
\begin{equation}
\label{Eq:R12ndMoment2}
\sup_{P \in \mathcal{P}_{d,\theta}} \sup_{k_{\mathrm{L}}\in K_{\beta,\tau} } \sup_{x\in \mathcal{R}_n \cup \mathcal{S}_n^{\epsilon_n}} \Bigl\{q_{r}^{n}(k_{\mathrm{L}}) +  \int_{r^2}^{t_0} q_{\sqrt{t}}^n(k_{\mathrm{L}}) \, dt \Bigr\} = O(n^{-M}). 
\end{equation}
We conclude from~\eqref{eq:kfbar},~\eqref{Eq:R1Main},~\eqref{Eq:R1Main2},~\eqref{Eq:R1Prob},~\eqref{Eq:R12ndMoment},~\eqref{Eq:R12ndMoment1} and~\eqref{Eq:R12ndMoment2}, together with Jensen's inequality to deal with the third term on the right-hand side of~\eqref{Eq:R1Main}, that~\eqref{Eq:R1} holds.  With only simple modifications, we have also shown~\eqref{eq:R2}, which bounds $R_2$.

\bigskip

\textit{To bound $R_5$}: Write
\begin{align*}
R_5 :=& \int_{\mathcal{S}_n} R_5(x_0) \, d\mathrm{Vol}^{d-1}(x_0) \\
=& \int_{\mathcal{S}_n} \int_{-\epsilon_{n}}^{\epsilon_{n}}  t\|\dot{\psi}(x_0)\| \Bigl[ \mathbb{P}\{\hat{S}_n(x_0^t) < 1/2\} - \mathbb{E}\Phi\bigl(\hat\theta(x_0^t)\bigr) \Bigr] \,dt \, d\mathrm{Vol}^{d-1}(x_0).
\end{align*} 
Now by a non-uniform version of the Berry--Esseen theorem \citep[][Theorem~1]{Paditz1989}, for every $t \in (-\epsilon_n,\epsilon_n)$ and $x_0 \in \mathcal{S}_n$,
\begin{equation}
\label{eq:berryesseen}
\bigl|\mathbb{P}\{\hat{S}_n(x_0^t) < 1/2|X^n\} - \Phi\bigl(\hat\theta(x_0^t)\bigr)\bigr| \leq \frac{32}{k_{\mathrm{L}}(x_0^t) \hat{\sigma}_n(x_0^t,X^n)}\frac{1}{1+|\hat{\theta}(x_0^t)|^3}.
\end{equation}
Let
\[
	t_n=t_n(x_0) := C \max \biggl\{ k_{\mathrm{L}}(x_0)^{-1/2} , \Bigl( \frac{k_{\mathrm{L}}(x_0)}{n \bar{f}(x_0)} \Bigr)^{2/d}\ell\bigl(\bar{f}(x_0)\bigr) \biggr\},
\]
where
\[
	C := \frac{4}{a_d^{2/d}\epsilon_0}. 
\]
In the following we integrate the bound in~\eqref{eq:berryesseen} over the regions $|t|\leq t_n$ and $|t|\in (t_n, \epsilon_n)$ separately.  Define the event 
\[
B_{k_{\mathrm{L}}} := \biggl\{\hat{\sigma}_n(x_0^t,X^n) \geq \frac{1}{3k_{\mathrm{L}}(x_0^t)^{1/2}} \ \text{for all $x_0 \in \mathcal{S}_n$, $t \in (-\epsilon_n,\epsilon_n)$}\biggr\},
\]
so that, by very similar arguments to those used to bound $\mathbb{P}(A_{k_{\mathrm{L}}}^c)$ in Step~2, we have $\mathbb{P}(B_{k_{\mathrm{L}}}^c) = O(n^{-M})$ for every $M > 0$, uniformly for $P \in \mathcal{P}_{d,\theta}$ and $k_{\mathrm{L}} \in K_{\beta,\tau}$.  It follows by~\eqref{eq:berryesseen} and Step~2 that there exists $n_4 \in \mathbb{N}$ such that for all $n \geq n_4$, $k_{\mathrm{L}} \in K_{\beta,\tau}$ and $x_0 \in \mathcal{S}_n$,
\begin{align}
\label{eq:R61}
	\biggl|  \int_{-t_n}^{t_n}  t \Bigl[&  \mathbb{P}\{\hat{S}_n(x_0^t) < 1/2\} - \mathbb{E}\Phi\bigl(\hat\theta(x_0^t)\bigr) \Bigr] \,dt \biggr| \nonumber 
\\ & \leq  \int_{-t_n}^{t_n} \mathbb{E} \biggl( \frac{32 |t|\mathbbm{1}_{B_{k_{\mathrm{L}}}}}{k_{\mathrm{L}}(x_0^t) \hat{\sigma}_n(x_0^t,X^n)} \biggr) \,dt + t_n^2\mathbb{P}(B_{k_{\mathrm{L}}}^c)\leq \frac{128 t_n^2}{k_\mathrm{L}(x_0)^{1/2}}.
\end{align}
By Step~1, there exists $n_5 \in \mathbb{N}$ such that for $n \geq n_5$, $P \in \mathcal{P}_{d,\theta}$, $k_\mathrm{L} \in K_{\beta,\tau}$, $x_0 \in \mathcal{S}_n$ and $|t| \in (t_n,\epsilon_n)$,
\begin{align}
\label{Eq:mun}
	|\mu_n(x_0^t)-1/2| &\geq |\eta(x_0^t)-1/2| - |\mu_n(x_0^t)-\eta(x_0^t)| \nonumber \\
	& \geq \frac{1}{2} \inf_{z \in \mathcal{S}} \|\dot{\eta}(z)\| |t| - \frac{1}{4} C \epsilon_0M_0  \Bigl( \frac{k_\mathrm{L}(x_0)}{n \bar{f}(x_0)} \Bigr)^{2/d}\ell\bigl(\bar{f}(x_0)\bigr) \nonumber
\\ & > \frac{1}{4} \epsilon_0 M_0 |t|.
\end{align}
Thus for $n \geq n_5$, $P \in \mathcal{P}_{d,\theta}$, $k_\mathrm{L} \in K_{\beta,\tau}$, $x_0 \in \mathcal{S}_n$ and $|t| \in (t_n,\epsilon_n)$, we have that
\begin{align}
\label{eq:thetaprob}
	\mathbb{P} &\Bigl\{ |\hat{\theta}(x_0^t)| < \frac{1}{4} \epsilon_0 M_0 k_\mathrm{L}^{1/2}(x_0) |t|\Bigr\} \nonumber \\
	& \leq \mathbb{P} \Bigl\{ |\hat{\mu}_n(x_0^t,X^n)- \mu_n(x_0^t)| > |\mu_n(x_0^t)-1/2| -\frac{1}{8} \epsilon_0 M_0|t|\Bigr\} \nonumber \\
	& \leq \mathbb{P} \Bigl\{ |\hat{\mu}_n(x_0^t,X^n)- \mu_n(x_0^t)| > \frac{1}{8} \epsilon_0M_0|t|\Bigr\} \leq \frac{64 \mathrm{Var}\{ \hat{\mu}_n(x_0^t,X^n) \}}{ \epsilon_0^2 M_0^2 t^2}.
\end{align}
It follows by~\eqref{eq:berryesseen},~\eqref{eq:thetaprob} and Step~3 that, for $n \geq n_5$,
\begin{align}
\label{eq:tnen}
	\biggl|  \int_{|t| \in (t_n,\epsilon_n)} t &\bigl[ \mathbb{P}\{\hat{S}_n(x_0^t) < 1/2\} - \mathbb{E}\Phi\bigl(\hat\theta(x_0^t)\bigr) \bigr] \,dt \biggr| \nonumber \\
	& \hspace{6pt} \leq \int_{|t| \in (t_n,\epsilon_n)}\!\!\!\! |t| \mathbb{E} \biggl( \frac{32\mathbbm{1}_{B_{k_{\mathrm{L}}}}}{k_{\mathrm{L}}(x_0^t) \hat{\sigma}_n(x_0^t,X^n)}\frac{1}{1\!+\!\frac{1}{64}\epsilon_0^3M_0^3 k_\mathrm{L}(x_0)^{3/2}|t|^3} \biggr) \,dt \nonumber \\
	& \hspace{36pt} + \int_{|t| \in (t_n,\epsilon_n)} \frac{64 \mathrm{Var}\{ \hat{\mu}_n(x_0^t,X^n) \}}{ \epsilon_0^2M_0^2 |t|} \,dt + \epsilon_n^2\mathbb{P}(B_{k_{\mathrm{L}}}^c) \nonumber \\
	& \hspace{6pt} \leq \frac{192}{k_{\mathrm{L}}(x_0)^{3/2}}  \int_0^\infty \frac{u}{1+\frac{1}{64}\epsilon_0^3 M_0^3 u^3} \,du \nonumber \\
	& \hspace{36pt} + \frac{128}{ \epsilon_0^2 M_0^2} \sup_{|t| \in (t_n,\epsilon_n)} \mathrm{Var}\{ \hat{\mu}_n(x_0^t,X^n) \} \log \Bigl( \frac{\epsilon_n}{t_n} \Bigr) + \epsilon_n^2\mathbb{P}(B_{k_{\mathrm{L}}}^c) \nonumber
\\ &  \hspace{6pt} = o \Bigl( \frac{1}{k_\mathrm{L}(x_0)} \Bigr)
\end{align}
uniformly for $P \in \mathcal{P}_{d,\theta}$, $k_\mathrm{L} \in K_{\beta,\tau}$ and $x_0 \in \mathcal{S}_n$. We conclude from~\eqref{eq:R61} and~\eqref{eq:tnen} that $|R_5|=o(\gamma_n(k_\mathrm{L}))$, uniformly for $P \in \mathcal{P}_{d,\theta}$ and $k_\mathrm{L} \in K_{\beta,\tau}$.

\medskip

\textit{To bound $R_6$}: Let $\theta(x_0^t) := -2k_\mathrm{L}(x_0^t)^{1/2}\{\mu_n(x_0^t)-1/2\}$. Write
\[
	R_6:= \int_{\mathcal{S}_n} R_6(x_0) \, d\mathrm{Vol}^{d-1}(x_0) = R_{61} + R_{62},
\]
where
\[
	R_{61}:= \int_{\mathcal{S}_n} \int_{-\epsilon_{n}}^{\epsilon_{n}}  t\|\dot{\psi}(x_0)\| \Bigl[ \mathbb{E}\Phi\bigl(\hat\theta(x_0^t)\bigr) - \Phi\bigl(\theta(x_0^t)\bigr) \Bigr] \,dt \, d\mathrm{Vol}^{d-1}(x_0)
\]
and
\[
	R_{62}:= \int_{\mathcal{S}_n} \int_{-\epsilon_{n}}^{\epsilon_{n}}  t\|\dot{\psi}(x_0)\| \Bigl[\Phi\bigl(\theta(x_0^t)\bigr) - \Phi\bigl(\bar{\theta}(x_0,t)\bigr) \Bigr] \,dt \, d\mathrm{Vol}^{d-1}(x_0).
\]

\bigskip

\textit{To bound $R_{61}$}: We again deal with the regions $|t|\leq t_n$ and $|t| \in (t_n, \epsilon_n)$ separately. First let $\tilde{\theta}(x_0^t):= -2k_\mathrm{L}(x_0^t)^{1/2}\{ \hat{\mu}_n(x_0^t,X^n)-1/2\}$.  Writing $\phi$ for the standard normal density, and using the facts that $|\hat{\theta}(x_0^t)| \geq |\tilde{\theta}(x_0^t)|$, that $\hat{\theta}(x_0^t)$ and $\tilde{\theta}(x_0^t)$ have the same sign, and that $|x\phi(x)| \leq 1$, we have
\begin{align*}
	&\biggl| \int_{-t_n}^{t_n} t \bigl[ \mathbb{E}\Phi\bigl(\hat\theta(x_0^t)\bigr) - \Phi\bigl(\theta(x_0^t)\bigr) \bigr] \,dt \biggr| \\
&\leq \int_{-t_n}^{t_n} |t| \mathbb{E} \Bigl\{|\hat{\theta}(x_0^t)-\tilde{\theta}(x_0^t)|\phi \bigl( \tilde{\theta}(x_0^t) \bigr)\mathbbm{1}_{A_{k_{\mathrm{L}}}} + |\tilde{\theta}(x_0^t)-\theta(x_0^t)|\Bigr\} \,dt + t_n^2\mathbb{P}(A_{k_{\mathrm{L}}}^c) \\
	&\leq \int_{-t_n}^{t_n} |t| \biggl[\mathbb{E} \biggl\{\mathbbm{1}_{A_{k_{\mathrm{L}}}}  \Bigl| \frac{1}{2k_\mathrm{L}(x_0^t)^{1/2} \hat{\sigma}_n(x_0^t,X^n)} -1 \Bigr|\biggr\} 
\\ & \hspace{80 pt} + 2 k_\mathrm{L}(x_0^t)^{1/2}  \mathrm{Var}^{1/2} \{ \hat{\mu}_n(x_0^t,X^n) \}\biggr] \,dt + t_n^2\mathbb{P}(A_{k_{\mathrm{L}}}^c) = o(t_n^2)
\end{align*}
uniformly for $P \in \mathcal{P}_{d,\theta}$, $k_\mathrm{L} \in K_{\beta,\tau}$ and $x_0 \in \mathcal{S}_n$.  Note that for $|t| \in (t_n,\epsilon_n)$ and $x_0 \in \mathcal{S}_n$, we have when $\epsilon_n < \epsilon_0$ and $n \geq n_5$ that
\begin{align}
\label{Eq:hattheta}
	\mathbb{E}\bigl\{\mathbbm{1}_{A_{k_{\mathrm{L}}} \cap B_{k_{\mathrm{L}}}}&\bigl| \hat{\theta}(x_0^t)-\theta(x_0^t) \bigr|\bigr\} \nonumber
\\ &\leq \mathbb{E} \biggl\{ \frac{\mathbbm{1}_{A_{k_{\mathrm{L}}} \cap B_{k_{\mathrm{L}}}}}{\hat{\sigma}_n(x_0^t,X^n)}|\hat{\mu}_n(x_0^t,X^n)-\mu_n(x_0^t)| \nonumber
\\ &\hspace{60 pt} + \mathbbm{1}_{A_{k_{\mathrm{L}}} \cap B_{k_{\mathrm{L}}}}|\theta(x_0^t)|\Bigl| \frac{1}{2k_\mathrm{L}(x_0^t)^{1/2} \hat{\sigma}_n(x_0^t,X^n)} -1 \Bigr| \biggr\} \nonumber\\
&\leq 3k_\mathrm{L}(x_0)^{1/2}  \mathrm{Var}^{1/2}\{ \hat{\mu}_n(x_0^t,X^n) \} \nonumber \\
&\hspace{8pt}+ \frac{5}{2}k_\mathrm{L}(x_0)^{1/2}M_0 |t| \mathbb{E} \biggl\{\mathbbm{1}_{A_{k_{\mathrm{L}}} \cap B_{k_{\mathrm{L}}}}\Bigl|\frac{1}{2k_\mathrm{L}(x_0^t)^{1/2} \hat{\sigma}_n(x_0^t,X^n)} -1 \Bigr|\biggr\}.
\end{align}
Thus by~\eqref{Eq:mun},~\eqref{eq:thetaprob},~\eqref{Eq:hattheta} and Step~3, for $\epsilon_n < \epsilon_0$ and $n \geq n_5$, 
\begin{align}
\label{Eq:R711}
	&\int_{|t| \in (t_n, \epsilon_n)} |t| \bigl| \mathbb{E}\Phi\bigl(\hat\theta(x_0^t)\bigr) - \Phi\bigl(\theta(x_0^t)\bigr) \bigr| \,dt \nonumber \\
	&\leq \int_{|t| \in (t_n, \epsilon_n)} |t| \mathbb{E} \Bigl\{\mathbbm{1}_{A_{k_{\mathrm{L}}} \cap B_{k_{\mathrm{L}}}}\bigl| \hat{\theta}(x_0^t)- \theta(x_0^t) \bigr|\Bigr\} \phi \Bigl(\frac{1}{4} \epsilon_0 M_0 k_\mathrm{L}^{1/2}(x_0) |t|\Bigr)  \,dt \nonumber
\\ & \hspace{30 pt} + \mathbb{P}(A_{k_{\mathrm{L}}}^c \cup B_{k_{\mathrm{L}}}^c) + \frac{128}{ \epsilon_0^2M_0^2} \sup_{|t| \in (t_n,\epsilon_n)} \mathrm{Var}\{ \hat{\mu}_n(x_0^t,X^n) \} \log \Bigl( \frac{\epsilon_n}{t_n} \Bigr) \nonumber
\\ &  = o \Bigl( \frac{1}{k_\mathrm{L}(x_0)} \Bigr)
\end{align}
uniformly for $P \in \mathcal{P}_{d,\theta}$, $k_\mathrm{L} \in K_{\beta,\tau}$ and $x_0 \in \mathcal{S}_n$.

\medskip
 
\textit{To bound $R_{62}$}: Let
\[
u(x) \equiv u_n(x) := k_\mathrm{L}(x)^{1/2}\Bigl( \frac{k_\mathrm{L}(x)}{n \bar{f}(x)} \Bigr)^{2/d}.
\]
Given $\epsilon > 0$ small enough that $\epsilon^2 + \frac{\epsilon}{2\epsilon_0} < 1/2$, by Step~1 there exists $n_6 \in \mathbb{N}$ such that for $n \geq n_6$, $P \in \mathcal{P}_{d,\theta}$, $k_{\mathrm{L}} \in K_{\beta,\tau}$, $x_0 \in \mathcal{S}_n$ and $|t| < \epsilon_n$,  
\[
\bigl|\theta(x_0^t) - \bar{\theta}(x_0,t)\bigr| \leq \epsilon^2 \bigl\{|t|k_{\mathrm{L}}(x_0)^{1/2} + u(x_0)\ell\bigl(\bar{f}(x_0)\bigr)\bigr\}.
\]
By decreasing $\epsilon$ and increasing $n_6$ if necessary, it follows that
\begin{align*}
& \bigl|\Phi\bigl(\theta(x_0^t)\bigr) - \Phi\bigl(\bar{\theta}(x_0,t)\bigr)\bigr| \leq \epsilon^2 \bigl\{|t|k_{\mathrm{L}}(x_0)^{1/2} + u(x_0)\ell\bigl(\bar{f}(x_0)\bigr) \bigr\} \phi\Bigl(\frac{1}{2}\bar{\theta}(x_0,t)\Bigr),
\end{align*}
for all $n \geq n_6$, $P \in \mathcal{P}_{d,\theta}$, $k_{\mathrm{L}} \in K_{\beta,\tau}$, $x_0 \in \mathcal{S}_n$ and $t \in (-\epsilon_n,\epsilon_n)$ satisfying $2\epsilon u(x_0)\ell\bigl(\bar{f}(x_0)\bigr)\|\dot{\eta}(x_0)\| \leq |\bar{\theta}(x_0,t)|$.   Substituting $u = \bar{\theta}(x_0,t)/2$, it follows that there exists $C^* > 0$ such that for all $n \geq n_6$, $P \in \mathcal{P}_{d,\theta}$ and $k_{\mathrm{L}} \in K_{\beta,\tau}$,
\begin{align}
\label{Eq:R721} 
&|R_{62}| \nonumber \\
&\leq  \int_{\mathcal{S}_n} \int_{|u| \leq \epsilon u(x_0)\ell(\bar{f}(x_0))\|\dot{\eta}(x_0)\|} \frac{2\bar{f}(x_0)}{\|\dot{\eta}(x_0)\|k_{\mathrm{L}}(x_0)}|u + u(x_0)a(x_0)| \, du \, d\mathrm{Vol}^{d-1}(x_0) \nonumber
\\ &\hspace{60 pt} + \int_{\mathcal{S}_n} \int_{-\infty}^{\infty} \frac{2\bar{f}(x_0)|u + u(x_0)a(x_0)|}{\|\dot{\eta}(x_0)\|^2 k_{\mathrm{L}}(x_0)}\bigl\{\epsilon^2|u + u(x_0)a(x_0)| \nonumber
\\ & \hspace{150 pt} + \epsilon |u| \bigr\} \phi(u) \,du \, d\mathrm{Vol}^{d-1}(x_0)  \leq C^* \epsilon \gamma_n(k_\mathrm{L}).
\end{align}
The combination of~\eqref{Eq:R711} and~\eqref{Eq:R721} yields the desired error bound on $|R_6|$ in~\eqref{Eq:R6R7}, uniformly for $P \in \mathcal{P}_{d,\theta}$, $k_{\mathrm{L}} \in K_{\beta,\tau}$, and therefore completes the proof.
\end{proof}

\section{Empirical analysis}
\label{sec:adaptsims}
In this section, we compare the $k_{\mathrm{O}}$nn and $k_{\mathrm{SS}}$nn classifiers, introduced in Section~\ref{sec:semisup} of the main text, with the standard $k$nn classifier studied in Section~\ref{sec:mainstandard} of the main text.  We investigate three settings that reflect the differences between the main results in these sections.
\begin{itemize}
\item Setting 1: $P_1$ is the distribution of $d$ independent $N(0,1)$ components; whereas $P_0$ is the distribution of $d$ independent $N(1,1/4)$ components.
\item Setting 2: $P_1$ is the distribution of $d$ independent $t_{5}$ components; $P_0$ is the distribution of $d$ independent components, the first $\lfloor d/2 \rfloor$ having a $t_{5}$ distribution and the remainder having a $N(1,1)$ distribution. 
\item Setting 3: $P_1$ is the distribution of $d$ independent standard Cauchy components; $P_0$ is the distribution of $d$ independent components, the first $\lfloor d/2 \rfloor$ being standard Cauchy and the remainder standard normal.
\end{itemize} 

The corresponding marginal distribution $P_X$ in Setting 1 satisfies \textbf{(A.4)} for every $\rho > 0$.  Hence, for the standard $k$-nearest neighbour classifier when $d \geq 5$, we are in the setting of Theorem~\ref{thm:exriskRdstandard}(i), while for $d \leq 4$, we can only appeal to Theorem~\ref{thm:exriskRdstandard}(ii).  On the other hand, for the local-$k$-nearest neighbour classifiers, the results of Theorems~\ref{thm:exriskRdOadapt}(i) and~\ref{thm:exriskRdSSadapt}(i) apply for all dimensions, and we can expect the excess risk to converge to zero at rate $O(n^{-4/(d+4)})$.  In Setting~2, \textbf{(A.4)} holds for $\rho < 5$, but not for $\rho \geq 5$.  Thus, for the standard $k$-nearest neighbour classifier, we are in the setting of Theorem~\ref{thm:exriskRdstandard}(ii) for $d < 20$, whereas Theorems~\ref{thm:exriskRdOadapt}(i) and~\ref{thm:exriskRdSSadapt}(i) again apply for all dimensions for the local classifiers.  Finally, in Setting~3, \textbf{(A.4)} does not hold for any $\rho \geq 1$, and only the conditions of Theorems~\ref{thm:exriskRdstandard}(ii),~\ref{thm:exriskRdOadapt}(ii) and~\ref{thm:exriskRdSSadapt}(ii) apply.

\begin{table}[ht!]
  \centering
  \caption{\label{tab:1}Misclassification rates for Settings 1, 2 and 3.  In the final two columns we present the regret ratios given in~\eqref{eq:regretratio} (with standard errors calculated via the delta method).}
    \begin{tabular}{c c r r r r r r}
\hline
 $d$&Bayes risk& $n$&$\hat{k}$nn risk&$\hat{k}_{\mathrm{O}}$nn risk&$\hat{k}_{\mathrm{SS}}$nn risk&O RR& SS RR\\
\hline
   \hline
\multicolumn{3}{l}{Setting 1}\\
1&22.67&50&${26.85}_{0.13}$&${25.91}_{0.12}$&${25.98}_{0.13}$&${0.78}_{0.022}$&${0.79}_{0.023}$\\
&         &200&${24.07}_{0.06}$&${23.52}_{0.06}$&${23.48}_{0.05}$&${0.61}_{0.030}$&${0.58}_{0.029}$\\
&       &1000&${23.20}_{0.04}$&${22.93}_{0.04}$&${22.94}_{0.04}$&${0.48}_{0.048}$&${0.50}_{0.048}$\\
2&13.30&50&${17.70}_{0.09}$&${16.96}_{0.08}$&${16.95}_{0.08}$&${0.83}_{0.015}$&${0.83}_{0.015}$\\
&        &200&${15.09}_{0.05}$&${14.69}_{0.04}$&${14.74}_{0.05}$&${0.77}_{0.018}$&${0.80}_{0.019}$\\
&      &1000&${14.04}_{0.04}$&${13.78}_{0.03}$&${13.80}_{0.03}$&${0.65}_{0.025}$&${0.67}_{0.025}$\\
5& 3.53&50&$ {9.46}_{0.07}$&${8.95}_{0.06}$&${8.94}_{0.06}$&${0.91}_{0.006}$&${0.91}_{0.006}$\\
&        &200&$ {6.94}_{0.03}$&${6.67}_{0.03}$&${6.70}_{0.03}$&${0.92}_{0.006}$&${0.93}_{0.007}$\\
&      &1000&$ {5.49}_{0.02}$&${5.18}_{0.02}$&${5.23}_{0.02}$&${0.84}_{0.008}$&${0.87}_{0.008}$\\
   \hline
\multicolumn{3}{l}{Setting 2}\\
1&31.16&50&${36.55}_{0.14}$&${36.07}_{0.14}$&${35.93}_{0.14}$&${0.91}_{0.020}$&${0.88}_{0.020}$\\
&         &200&${32.93}_{0.08}$&${32.38}_{0.07}$&${32.42}_{0.07}$&${0.69}_{0.031}$&${0.71}_{0.032}$\\
&       &1000&${31.62}_{0.05}$&${31.37}_{0.05}$&${31.37}_{0.05}$&${0.46}_{0.065}$&${0.47}_{0.066}$\\
2&31.15&50&${37.79}_{0.13}$&${38.02}_{0.12}$&${37.90}_{0.12}$&${1.02}_{0.014}$&${1.01}_{0.015}$\\
         &&200&${33.64}_{0.08}$&${33.63}_{0.07}$&${33.54}_{0.07}$&${1.00}_{0.028}$&${0.96}_{0.026}$\\
 &      &1000&${31.83}_{0.05}$&${31.81}_{0.05}$&${31.80}_{0.05}$&${0.97}_{0.039}$&${0.95}_{0.038}$\\
5&20.10&50&${28.74}_{0.12}$&${29.16}_{0.12}$&${29.13}_{0.11}$&${1.05}_{0.011}$&${1.05}_{0.011}$\\
&         &200&${23.60}_{0.06}$&${23.75}_{0.06}$&${23.93}_{0.06}$&${1.04}_{0.014}$&${1.09}_{0.015}$\\
&       &1000&${21.86}_{0.04}$&${21.71}_{0.04}$&${21.77}_{0.04}$&${0.91}_{0.014}$&${0.95}_{0.014}$\\
  \hline
\multicolumn{3}{l}{Setting 3}\\
1&37.44&50&${44.76}_{0.10}$&${43.09}_{0.12}$&${43.08}_{0.12}$&${0.77}_{0.013}$&${0.77}_{0.013}$\\
&         &200&${41.86}_{0.08}$&${40.18}_{0.09}$&${40.23}_{0.09}$&${0.62}_{0.017}$&${0.63}_{0.017}$\\
&       &1000&${38.68}_{0.06}$&${37.85}_{0.05}$&${37.89}_{0.05}$&${0.33}_{0.033}$&${0.36}_{0.032}$\\
2&37.45&50&${46.20}_{0.09}$&${44.81}_{0.10}$&${45.24}_{0.10}$&${0.84}_{0.009}$&${0.89}_{0.009}$\\
&         &200&${43.50}_{0.07}$&${42.29}_{0.08}$&${42.86}_{0.08}$&${0.80}_{0.011}$&${0.89}_{0.011}$\\
     &&1000&${40.53}_{0.06}$&${39.64}_{0.06}$&${39.96}_{0.06}$&${0.71}_{0.013}$&${0.82}_{0.014}$\\
5&23.23&50&${41.56}_{0.11}$&${38.13}_{0.11}$&${39.26}_{0.12}$&${0.81}_{0.005}$&${0.87}_{0.005}$\\
         &&200&${36.02}_{0.07}$&${33.34}_{0.06}$&${34.68}_{0.07}$&${0.79}_{0.004}$&${0.90}_{0.004}$\\
       &&1000&${31.46}_{0.05}$&${29.91}_{0.05}$&${30.58}_{0.05}$&${0.81}_{0.004}$&${0.89}_{0.004}$\\
  \end{tabular}
\end{table}

For the standard $k$nn classifier, we use 5-fold cross validation to choose $k$, based on a sequence of equally-spaced values between 1 and $\lfloor n/4 \rfloor$ of length at most 40.  For the oracle classifier, we set 
\[
\hat{k}_{\mathrm{O}}(x) : = \max\Bigl[1, \min\bigl[\lfloor \hat{B}_{\mathrm{O}}\{\bar{f}(x)n/\|\bar{f}\|_{\infty}\}^{4/(d+4)} \rfloor, n/2\bigr]\Bigr],
\]
where $\hat{B}_{\mathrm{O}}$ was again chosen via 5-fold cross validation, but based on a sequence of 40 equally-spaced points between $n^{-4/(d+4)}$ (corresponding to the 1-nearest neighbour classifier) and $n^{d/(d+4)}$.  Similarly, for the semi-supervised classifier, we set 
\[
\hat{k}_{\mathrm{SS}}(x) : = \max\Bigl[1, \min\bigl[\lfloor \hat{B}_{\mathrm{SS}} \{\hat{f}_m(x)n/\|\hat{f}_m\|_{\infty}\}^{4/(d+4)} \rfloor, n/2\bigr] \Bigr],
\]
where $\hat{B}_{\mathrm{SS}}$ was chosen analogously to $\hat{B}_{\mathrm{O}}$, and where $\hat{f}_{m}$ is the $d$-dimensional kernel density estimator constructed using a truncated normal kernel and bandwidths chosen via the default method in the \texttt{R} package \texttt{ks} \citep{Duong:15}.   In practice, we estimated $\|\hat{f}_m\|_{\infty}$ by the maximum value attained on the unlabelled training set. 

In each of the three settings above, we generated a training set of size $n \in \{50,200,1000\}$ in dimensions $d \in \{1,2,5\}$, an unlabelled training set of size 1000, and a test set of size 1000.  In Table~\ref{tab:1}, we present the sample mean and standard error (in subscript) of the risks computed from 1000 repetitions of each experiment. Further, we present estimates of the regret ratios, given by
\begin{equation}
\label{eq:regretratio}
\frac{R(\hat{C}_n^{\hat{k}_{\mathrm{O}}\mathrm{nn}}) - R(C^{\mathrm{Bayes}})}{R(\hat{C}_n^{\hat{k}\mathrm{nn}}) - R(C^{\mathrm{Bayes}})} \ \ \mathrm{and} \ \ \frac{R(\hat{C}_n^{\hat{k}_{\mathrm{SS}}\mathrm{nn}}) - R(C^{\mathrm{Bayes}})}{R(\hat{C}_n^{\hat{k}\mathrm{nn}}) - R(C^{\mathrm{Bayes}})},
\end{equation}
for which the standard errors given are estimated via the delta method.  From Table~\ref{tab:1}, we saw improvement in performance from the oracle and semi-supervised classifiers in 22 of the 27 experiments, comparable performance in three experiments, and there were two where the standard $k$nn classifier was the best of the three classifiers considered.  In those latter two cases, the theoretical improvement expected for the local classifiers is small; for instance, when $d=5$ in Setting~2, the excess risk for the local classifiers converges at rate $O(n^{-4/9})$, while the standard $k$-nearest neighbour classifier can attain a rate at least as fast as $o(n^{-1/3+\epsilon})$ for every $\epsilon > 0$.  It is therefore perhaps unsurprising that we require the larger sample size of $n=1000$ for the local classifiers to yield an improvement in this case.  The semi-supervised classifier exhibits similar performance to the oracle classifier in all settings, though some deterioration is noticeable in higher dimensions, where it is harder to construct a good estimate of $\bar{f}$ from the unlabelled training data.

\section{An introduction to differential geometry, tubular neighbourhoods and integration on manifolds}
\label{Sec:ManifoldIntro}
The purpose of this section is to give a brief introduction to the ideas from differential geometry, specifically tubular neighbourhoods and integration on manifolds, which play an important role in our analysis of misclassification error rates, but which we expect are unfamiliar to many statisticians.  For further details and several of the proofs, we refer the reader to the many excellent texts on these topics, e.g. \citet{GuilleminPollack1974}, \citet{Gray:04}.

\subsection{Manifolds and regular values}
\label{Sec:Regular}

Recall that if $\mathcal{X}$ is an arbitrary subset of $\mathbb{R}^M$, we say $\phi:\mathcal{X} \rightarrow \mathbb{R}^N$ is \emph{differentiable} if for each $x \in \mathcal{X}$, there exists an open subset $U \subseteq \mathbb{R}^M$ containing $x$ and a differentiable function $F:U \rightarrow \mathbb{R}^N$ such that $F(z) = \phi(z)$ for $z \in U \cap \mathcal{X}$.  If $\mathcal{Y}$ is also a subset of $\mathbb{R}^M$, we say $\phi:\mathcal{X} \rightarrow \mathcal{Y}$ is a \emph{diffeomorphism} if $\phi$ is bijective and differentiable and if its inverse $\phi^{-1}$ is also differentiable.  We then say $\mathcal{S} \subseteq \mathbb{R}^d$ is an \emph{$m$-dimensional manifold} if for each $x \in \mathcal{S}$, there exist an open subset $U_x \subseteq \mathbb{R}^m$, a neighbourhood $V_x$ of $x$ in $\mathcal{S}$ and a diffeomorphism $\phi_x: U_x \rightarrow V_x$.  Such a diffeomorphism $\phi_x$ is called a \emph{local parametrisation} of $\mathcal{S}$ around $x$, and we sometimes suppress the dependence of $\phi_x, U_x$ and $V_x$ on $x$.  It turns out that the specific choice of local parametrisation is usually not important, and properties of the manifold are well-defined regardless of the choice made.

Let $\mathcal{S} \subseteq \mathbb{R}^d$ be an $m$-dimensional manifold and let $\phi:U \rightarrow \mathcal{S}$ be a local parametrisation of $\mathcal{S}$ around $x \in \mathcal{S}$, where $U$ is an open subset of $\mathbb{R}^m$.  Assume that $\phi(0) = x$ for convenience.  The \emph{tangent space} $T_x(\mathcal{S})$ to $\mathcal{S}$ at $x$ is defined to be the image of the derivative $D\phi_0:\mathbb{R}^m \rightarrow \mathbb{R}^d$ of $\phi$ at $0$.  Thus $T_x(\mathcal{S})$ is the $m$-dimensional subspace of $\mathbb{R}^d$ whose parallel translate $x + T_x(\mathcal{S})$ is the best affine approximation to $\mathcal{S}$ through $x$, and $(D\phi_0)^{-1}$ is well-defined as a map from $T_x(\mathcal{S})$ to $\mathbb{R}^m$.  If $f:\mathcal{S} \rightarrow \mathbb{R}$ is differentiable, we define the derivative $Df_x:T_x(\mathcal{S}) \rightarrow \mathbb{R}$ of $f$ at $x$ by $Df_x := Dh_0 \circ (D\phi_0)^{-1}$, where $h := f \circ \phi$.

In practice, it is usually rather inefficient to define manifolds through explicit diffeomorphisms.  Instead, we can often obtain them as level sets of differentiable functions.  Suppose that $\mathcal{R} \subseteq \mathbb{R}^d$ is a manifold and $\eta:\mathcal{R} \rightarrow \mathbb{R}$ is differentiable.  We say $y \in \mathbb{R}$ is a \emph{regular value} for $\eta$ if $\mathrm{image}(D\eta_x) = \mathbb{R}$ for every $x \in \mathcal{R}$ for which $\eta(x) = y$.  If $y \in \mathbb{R}$ is a regular value of $\eta$, then $\eta^{-1}(y)$ is a $(d-1)$-dimensional submanifold of $\mathcal{R}$ \citep[][p.~21]{GuilleminPollack1974}.

\subsection{Tubular neighbourhoods of level sets}
\label{Sec:Tubular}

For any set $\mathcal{S} \subseteq \mathbb{R}^d$ and $\epsilon > 0$, we call $\mathcal{S} + \epsilon B_1(0)$ the \emph{$\epsilon$-neighbourhood} of $\mathcal{S}$.  In circumstances where $\mathcal{S}$ is a $(d-1)$-dimensional manifold defined by the level set of a continuously differentiable function $\eta:\mathbb{R}^d \rightarrow \mathbb{R}$ with non-vanishing derivative on $\mathcal{S}$, the set $\mathcal{S}^\epsilon$ is often called a \emph{tubular neighbourhood}, and $\dot{\eta}(x)^Tv = 0$ for all $x \in \mathcal{S}$ and $v \in T_x(\mathcal{S})$.  We therefore have the following useful representation of the $\epsilon$-neighbourhood of $\mathcal{S}$ in terms of points on $\mathcal{S}$ and a perturbation in a normal direction.
\begin{proposition}
\label{Prop:DG1}
Let $\eta : \mathbb{R}^d \rightarrow [0,1]$, suppose that $\mathcal{S}:= \{x \in \mathbb{R}^d : \eta(x) =1/2\}$ is non-empty, and suppose further that $\eta$ is continuously differentiable on $\mathcal{S} + \epsilon B_1(0)$ for some $\epsilon > 0$, with $\dot{\eta}(x) \neq 0$ for all $x \in \mathcal{S}$, so that $\mathcal{S}$ is a $(d-1)$-dimensional manifold. Then 
\[
	\mathcal{S} + \epsilon B_1(0) = \Bigl\{x_0+\frac{t \dot{\eta}(x_0)}{\|\dot{\eta}(x_0)\|} : x_0 \in \mathcal{S}, |t|<\epsilon \Bigr\} =: \mathcal{S}^\epsilon.
\]
\end{proposition}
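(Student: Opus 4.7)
The inclusion $\mathcal{S}^\epsilon \subseteq \mathcal{S} + \epsilon B_1(0)$ is immediate: any point of the form $x_0 + t\dot{\eta}(x_0)/\|\dot{\eta}(x_0)\|$ with $x_0 \in \mathcal{S}$ and $|t|<\epsilon$ lies at Euclidean distance $|t|<\epsilon$ from $x_0 \in \mathcal{S}$. The content is therefore in the reverse inclusion, and my plan is to show it by a nearest-point argument: given $x \in \mathcal{S} + \epsilon B_1(0)$, I will select $x_0 \in \mathcal{S}$ minimising $\|x - \cdot\|$ over $\mathcal{S}$, and then verify by a first-order (Lagrange-multiplier) argument that $x - x_0$ lies in the one-dimensional normal line spanned by $\dot{\eta}(x_0)$.

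For the existence of $x_0$, fix $x \in \mathcal{S} + \epsilon B_1(0)$. By definition there is some $y \in \mathcal{S}$ with $r := \|x - y\| < \epsilon$, and I may assume (by shrinking $\epsilon$ slightly) that $r < \epsilon'< \epsilon$. I will show that $\mathcal{S} \cap \bar{B}_r(x)$ is a non-empty compact set: non-emptiness is witnessed by $y$; and closedness in $\bar{B}_r(x)$ follows because any sequence $z_n \in \mathcal{S} \cap \bar{B}_r(x)$ converging to some $z$ satisfies $\|x - z\| \leq r < \epsilon$, so $z$ lies in the open set $\mathcal{S} + \epsilon B_1(0)$ on which $\eta$ is continuous, whence $\eta(z) = \lim \eta(z_n) = 1/2$. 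The continuous function $z \mapsto \|x - z\|^2$ then attains its infimum on the compact set $\mathcal{S} \cap \bar{B}_r(x)$ at some $x_0$, and any minimiser on $\mathcal{S}$ lies inside $\bar{B}_r(x)$, so $x_0$ is globally closest in $\mathcal{S}$ to $x$, with $\|x - x_0\| \leq r < \epsilon$.

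For the first-order condition, I will use that $\mathcal{S}$ is a $(d-1)$-dimensional submanifold in a neighbourhood of $x_0$ (which follows from $\dot{\eta}(x_0) \neq 0$ by the regular-value characterisation recalled in Section~\ref{Sec:Regular}), and that its tangent space at $x_0$ is the hyperplane $T_{x_0}(\mathcal{S}) = \ker D\eta_{x_0} = \{v \in \mathbb{R}^d : \dot{\eta}(x_0)^T v = 0\}$. Taking any smooth curve $\gamma:(-\delta,\delta) \to \mathcal{S}$ with $\gamma(0) = x_0$ and $\dot{\gamma}(0) = v \in T_{x_0}(\mathcal{S})$, the scalar function $s \mapsto \|x - \gamma(s)\|^2$ has a minimum at $s = 0$, and differentiating at $s=0$ yields $(x_0 - x)^T v = 0$. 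Since this holds for every $v \in T_{x_0}(\mathcal{S})$, the vector $x - x_0$ is orthogonal to the hyperplane $T_{x_0}(\mathcal{S})$, hence proportional to $\dot{\eta}(x_0)$. Writing $x - x_0 = t\dot{\eta}(x_0)/\|\dot{\eta}(x_0)\|$, we have $|t| = \|x - x_0\| < \epsilon$, giving $x \in \mathcal{S}^\epsilon$ as required.

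The only non-routine step is the existence of a nearest point; the main obstacle is that $\eta$ is only assumed continuously differentiable (in particular, continuous) on the tube $\mathcal{S} + \epsilon B_1(0)$, not on all of $\mathbb{R}^d$, so the closedness of $\mathcal{S}$ as a subset of $\mathbb{R}^d$ is not given. The fix, as above, is to replace $\mathcal{S}$ by its intersection with a closed ball of radius $r < \epsilon$ around $x$, which is contained in the open tube where $\eta$ is continuous, so that the level-set argument goes through and compactness delivers the minimiser.
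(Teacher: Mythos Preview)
Your proposal is correct and follows essentially the same route as the paper: both prove the nontrivial inclusion by choosing a nearest point $x_0 \in \mathcal{S}$ to $x$ and then using a first-order variation along curves in $\mathcal{S}$ to deduce that $x - x_0$ is orthogonal to $T_{x_0}(\mathcal{S})$, hence parallel to $\dot{\eta}(x_0)$.

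The one place you diverge is in justifying the existence of $x_0$. The paper simply asserts that $\mathcal{S}$ is closed and takes a global minimiser; you instead restrict to the compact set $\mathcal{S} \cap \bar{B}_r(x)$ because you worry that continuity of $\eta$ is only given on the tube. In fact your own limit argument, applied globally, already shows $\mathcal{S}$ is closed in $\mathbb{R}^d$: if $z_n \in \mathcal{S}$ and $z_n \to z$, then for large $n$ we have $z \in B_\epsilon(z_n) \subseteq \mathcal{S} + \epsilon B_1(0)$, so $\eta$ is continuous at $z$ and $\eta(z) = 1/2$. So the paper's assertion is justified, and your localisation, while perfectly valid, is not strictly needed. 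The parenthetical ``by shrinking $\epsilon$ slightly'' is also unnecessary, since you already have $r < \epsilon$.
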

\begin{proof}
For any $x_0 \in \mathcal{S}$ and $|t| < \epsilon$, we have $x_0 + t\dot{\eta}(x_0)/\|\dot{\eta}(x_0)\| \in \mathcal{S} + \epsilon B_1(0)$.  On the other hand, suppose that $x \in \mathcal{S} + \epsilon B_1(0)$. Since $\mathcal{S}$ is closed, there exists $x_0 \in \mathcal{S}$ such that $\|x-x_0\| \leq \|x-y\|$ for all $y \in \mathcal{S}$. Rearranging this inequality yields that, for $y \neq x_0$,
\begin{equation}
\label{Eq:nearlyperp}
	2(x-x_0)^T \frac{(y-x_0)}{\|y-x_0\|} \leq \|y-x_0\|.
\end{equation}
Let $U$ be an open subset of $\mathbb{R}^{d-1}$ and $\phi:U \rightarrow \mathcal{S}$ be a local parametrisation of $\mathcal{S}$ around $x_0$, where without loss of generality we assume $\phi(0) = x_0$.  Let $v \in T_{x_0}(\mathcal{S}) \setminus \{0\}$ be given and let $h \in \mathbb{R}^{d-1} \setminus \{0\}$ be such that $D\phi_0(h)=v$. Then for $t>0$ sufficiently small we have $th \in U$, so by~\eqref{Eq:nearlyperp},
\[
	2(x-x_0)^T \frac{\{\phi(th)-\phi(0)\}}{\|\phi(th)-\phi(0)\|} \leq \|\phi(th)-\phi(0)\|.
\]
Letting $t \searrow 0$ we see that $(x-x_0)^T v \leq 0$. Since $v \in T_{x_0}(\mathcal{S}) \setminus \{0\}$ was arbitrary and $-v \in T_{x_0}(\mathcal{S}) \setminus \{0\}$, we therefore have that $(x-x_0)^Tv=0$ for all $v \in T_{x_0}(\mathcal{S})$.  Moreover, $\dot{\eta}(x_0)^Tv = 0$ for all $v \in T_{x_0}(\mathcal{S})$, so $x-x_0 \propto \dot{\eta}(x_0)$, which yields the result.
\end{proof}
In fact, under a slightly stronger condition on $\eta$, we have the following useful result:
\begin{proposition}
\label{Prop:DG2}
Let $\mathcal{R}$ be a $d$-dimensional manifold in $\mathbb{R}^d$, suppose that $\eta : \mathcal{R} \rightarrow [0,1]$ satisfies the condition that $\mathcal{S}:= \{x \in \mathcal{R} : \eta(x) =1/2\}$ is non-empty.  Suppose further that there exists $\epsilon > 0$ such that $\eta$ is twice continuously differentiable on $\mathcal{S}^\epsilon$.  Assume that $\dot{\eta}(x_0) \neq 0$ for all $x_0 \in \mathcal{S}$.  Define $g:\mathcal{S} \times (-\epsilon,\epsilon) \rightarrow \mathcal{S}^\epsilon$ by
\[
g(x_0,t) := x_0 + \frac{t\dot{\eta}(x_0)}{\|\dot{\eta}(x_0)\|}.
\]
If
\begin{equation}
\label{Eq:epsAssump}
\epsilon \leq \inf_{x_0 \in \mathcal{S}} \frac{\|\dot{\eta}(x_0)\|}{\sup_{z \in B_{2\epsilon}(x_0) \cap \mathcal{S}^\epsilon}\|\ddot{\eta}(z)\|_{\mathrm{op}}},
\end{equation}
then $g$ is injective.  In fact $g$ is a diffeomorphism, with
\begin{equation}
\label{Eq:Dg}
Dg_{(x_0,t)}(v_1,v_2) = (I + tB)\biggl(v_1 + \frac{\dot{\eta}(x_0)}{\|\dot{\eta}(x_0)\|}v_2\biggr),
\end{equation}
for $v_1 \in T_{x_0}(\mathcal{S})$ and $v_2 \in \mathbb{R}$, where 
\begin{equation}
\label{Eq:B}
B := \frac{1}{\|\dot{\eta}(x_0)\|}\biggl(I - \frac{\dot{\eta}(x_0)\dot{\eta}(x_0)^T}{\|\dot{\eta}(x_0)\|^2}\biggr)\ddot{\eta}(x_0).
\end{equation}
\end{proposition}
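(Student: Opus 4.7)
I would split the proof into three parts: (i) compute $Dg_{(x_0,t)}$ and verify formula~\eqref{Eq:Dg}; (ii) use assumption~\eqref{Eq:epsAssump} to show $Dg_{(x_0,t)}$ is a linear isomorphism $T_{x_0}(\mathcal{S})\times\mathbb{R}\to\mathbb{R}^d$ at every point; (iii) prove global injectivity of $g$ on its domain by a Taylor expansion argument along the chord between two hypothetical preimages. Surjectivity of $g$ onto $\mathcal{S}^\epsilon$ is immediate from Proposition~\ref{Prop:DG1}, and once (i)--(iii) are in hand, the inverse function theorem upgrades $g$ from a bijection to a diffeomorphism.

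\textbf{Derivative and invertibility.} Writing $n(x_0) := \dot\eta(x_0)/\|\dot\eta(x_0)\|$, the product and chain rules applied to $g(x_0,t) = x_0 + tn(x_0)$ give $Dg_{(x_0,t)}(v_1,v_2) = v_1 + t\, Dn_{x_0}(v_1) + v_2 n(x_0)$ for $(v_1, v_2) \in T_{x_0}(\mathcal{S}) \times \mathbb{R}$. Differentiating $n\circ\phi$ for a local parametrization $\phi$ of $\mathcal{S}$ at $x_0$ shows $Dn_{x_0}(v_1) = Bv_1$ with $B$ as in~\eqref{Eq:B}; the identity $\|n\|^2 \equiv 1$ gives $n(x_0)^T Dn_{x_0}(v_1) = 0$, so $Bv_1 \in T_{x_0}(\mathcal{S})$. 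The orthogonal decomposition $\mathbb{R}^d = T_{x_0}(\mathcal{S}) \oplus \mathrm{span}\{n(x_0)\}$ (which is orthogonal since $\dot\eta(x_0) \perp T_{x_0}(\mathcal{S})$) then yields~\eqref{Eq:Dg}. Since $\|B\|_{\mathrm{op}} \leq \|\ddot\eta(x_0)\|_{\mathrm{op}} / \|\dot\eta(x_0)\| \leq 1/\epsilon$ by~\eqref{Eq:epsAssump}, we have $|t|\|B\|_{\mathrm{op}} < 1$, so $(I+tB)|_{T_{x_0}(\mathcal{S})}$ is invertible by a Neumann series, and hence $Dg_{(x_0,t)}$ is a bijection.

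\textbf{Injectivity (main obstacle).} Suppose $g(x_0, t) = g(x_0', t')$. The case $x_0 = x_0'$ trivially forces $t = t'$, so assume $x_0 \neq x_0'$ and set $\Delta := \|x_0' - x_0\| \leq |t| + |t'| < 2\epsilon$. A midpoint estimate shows that every point of the chord $[x_0, x_0']$ lies within $\Delta/2 < \epsilon$ of $\{x_0,x_0'\}\subseteq \mathcal{S}$, hence lies in $\mathcal{S}^\epsilon \cap B_{2\epsilon}(x_0)$; Taylor's theorem applied to $s \mapsto \eta\bigl(x_0 + s(x_0'-x_0)\bigr)$, together with $\eta(x_0) = \eta(x_0') = 1/2$, therefore gives
\[
  |\dot\eta(x_0)^T (x_0'-x_0)| \leq \tfrac{1}{2}\sup_{z \in B_{2\epsilon}(x_0) \cap \mathcal{S}^\epsilon}\|\ddot\eta(z)\|_{\mathrm{op}}\, \Delta^2 \leq \frac{\|\dot\eta(x_0)\|}{2\epsilon}\Delta^2.
\]
Substituting $x_0' - x_0 = t n(x_0) - t' n(x_0')$ and writing $c := n(x_0)^T n(x_0')$ converts this into $|t - t'c| \leq \Delta^2/(2\epsilon)$; swapping the roles of $(x_0, t)$ and $(x_0', t')$ yields the symmetric bound $|t' - tc| \leq \Delta^2/(2\epsilon)$. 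Adding and subtracting gives $|(t+t')(1-c)|, |(t-t')(1+c)| \leq \Delta^2/\epsilon$, while $\Delta^2 = \tfrac{1}{2}(t+t')^2(1-c) + \tfrac{1}{2}(t-t')^2(1+c)$; combining these leads in the generic case $t \neq \pm t'$ to $\Delta^2 \leq \Delta^2\max(|t|,|t'|)/\epsilon < \Delta^2$, a contradiction. The degenerate cases $t = \pm t'$ are handled by the same device using only one of the two bounds. The main subtleties to watch here are the degenerate cases and the verification that the chord $[x_0, x_0']$ lies in the region where the Hessian bound from~\eqref{Eq:epsAssump} is available.

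\textbf{Conclusion.} Surjectivity of $g$ onto $\mathcal{S}^\epsilon$ is exactly the content of Proposition~\ref{Prop:DG1}. Combined with (i)--(iii), $g$ is a continuous bijection $\mathcal{S}\times(-\epsilon,\epsilon) \to \mathcal{S}^\epsilon$ with everywhere-invertible derivative, so the inverse function theorem gives that $g^{-1}$ is smooth and therefore $g$ is a diffeomorphism.
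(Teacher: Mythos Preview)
Your proof is correct and follows the same overall architecture as the paper's: compute $Dg$, show it is invertible via $\|tB\|_{\mathrm{op}} < 1$, establish global injectivity by a Taylor argument, then invoke the inverse function theorem together with surjectivity from Proposition~\ref{Prop:DG1}. The derivative computation and the invertibility step are essentially identical to the paper's.

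The injectivity argument differs in execution. The paper assumes without loss of generality that $|t_1| \geq |t_2|$ and uses the single identity
\[
\|x_2 - x_1\|^2 = \frac{2t_1\dot\eta(x_1)^T(x_2-x_1)}{\|\dot\eta(x_1)\|} + t_2^2 - t_1^2 \leq \frac{2t_1\dot\eta(x_1)^T(x_2-x_1)}{\|\dot\eta(x_1)\|},
\]
which, substituted into one Taylor expansion at $x_1$, yields the contradiction $|\dot\eta(x_1)^T(x_2-x_1)| < |\dot\eta(x_1)^T(x_2-x_1)|$ in two lines. Your route is more symmetric: you Taylor-expand at both endpoints, obtain the pair $|t - t'c|,\ |t' - tc| \leq \Delta^2/(2\epsilon)$, and combine these via the algebraic identity $\Delta^2 = \tfrac12(t+t')^2(1-c) + \tfrac12(t-t')^2(1+c)$. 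Both reach the same contradiction; the paper's version is shorter and avoids any case split. In fact your own bound $\Delta^2 \leq \max(|t|,|t'|)\,\Delta^2/\epsilon$ already holds without restriction on $t,t'$ (the factorisations $(t\pm t')^2(1\mp c) = |t\pm t'|\cdot|(t\pm t')(1\mp c)|$ are valid even when $t = \pm t'$), so your remark about handling degenerate cases separately is unnecessary.
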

\begin{proof}
\ \ Assume for a contradiction that there exist distinct points $x_1, x_2 \in \mathcal{S}$ and $t_1,t_2 \in (-\epsilon,\epsilon)$ with $|t_1| \geq |t_2|$ such that
\[
x_1 + \frac{t_1\dot{\eta}(x_1)}{\|\dot{\eta}(x_1)\|} = x_2 + \frac{t_2\dot{\eta}(x_2)}{\|\dot{\eta}(x_2)\|}.
\]
Then
\begin{equation}
\label{Eq:x2x1}
0 < \|x_2-x_1\|^2 = \frac{2t_1\dot{\eta}(x_1)^T(x_2-x_1)}{\|\dot{\eta}(x_1)\|} + t_2^2 - t_1^2 \leq \frac{2t_1\dot{\eta}(x_1)^T(x_2-x_1)}{\|\dot{\eta}(x_1)\|}.
\end{equation}
By Taylor's theorem and~\eqref{Eq:x2x1},
\begin{align*}
\label{Eq:x2x12}
|\dot{\eta}(x_1)^T(x_2-x_1)| &= |\eta(x_2) - \eta(x_1) - \dot{\eta}(x_1)^T(x_2-x_1)| \\
& \leq \frac{1}{2}\sup_{z \in B_{2\epsilon}(x_1) \cap \mathcal{S}^\epsilon}\|\ddot{\eta}(z)\|_{\mathrm{op}}\|x_2-x_1\|^2 \\
&< \sup_{z \in B_{2\epsilon}(x_1) \cap \mathcal{S}^\epsilon}\|\ddot{\eta}(z)\|_{\mathrm{op}}\frac{\epsilon|\dot{\eta}(x_1)^T(x_2-x_1)|}{\|\dot{\eta}(x_1)\|},
\end{align*}
contradicting the hypothesis~\eqref{Eq:epsAssump}.

To show that $g$ is a diffeomorphism, let $x_0 \in \mathcal{S}$ be given and let $\phi:U \rightarrow \mathcal{S}$ be a local parametrisation around $x_0$ with $\phi(0) = x_0$.  Define $\Phi:U \times (-\epsilon,\epsilon) \rightarrow \mathcal{S} \times (-\epsilon,\epsilon)$ by $\Phi(u,t) := (\phi(u),t)$, and $H: U \times (-\epsilon,\epsilon) \rightarrow \mathcal{S}^\epsilon$ by $H := g \circ \Phi$.  Finally, define the \emph{Gauss map} $n: \mathcal{S} \rightarrow \mathbb{R}^d$ by $n(x_0) := \dot{\eta}(x_0)/\|\dot{\eta}(x_0)\|$.  Then, for $h = (h_1^T,h_2)^T \in \mathbb{R}^{d-1} \times \mathbb{R}$ and $s \in \mathbb{R} \setminus \{0\}$, 
\begin{align*}
&\lim_{s \rightarrow 0} \frac{H(sh_1,t + sh_2) - H(0,t)}{s} \\
&\hspace{1.5cm}= \lim_{s \rightarrow 0} \biggl\{\frac{\phi(sh_1) - \phi(0)}{s} + \frac{t\{n(\phi(sh_1)) - n(\phi(0))\}}{s} + h_2n\bigl(\phi(sh_1)\bigr)\biggr\} \\
&\hspace{1.5cm}= D\phi_0(h_1) + tDn_{x_0} \circ D\phi_0(h_1) + h_2n(x_0) \\
&\hspace{1.5cm}= Dg_{(x_0,t)} \circ D\Phi_{(0,t)}(h_1,h_2),
\end{align*}
where $Dg_{(x_0,t)}:T_{x_0}(\mathcal{S}) \times \mathbb{R} \rightarrow \mathbb{R}^d$ is given in~\eqref{Eq:Dg}.  

To show that $Dg_{(x_0,t)}$ is invertible, note that for $v_1 \in T_{x_0}(\mathcal{S})$ and $|t| < \epsilon$,
\[
\frac{|t|}{\|\dot{\eta}(x_0)\|}\biggl\|\biggl(I - \frac{\dot{\eta}(x_0)\dot{\eta}(x_0)^T}{\|\dot{\eta}(x_0)\|^2}\biggr)\ddot{\eta}(x_0)v_1\biggr\| \leq \frac{|t|\|\ddot{\eta}(x_0)\|_{\mathrm{op}}}{\|\dot{\eta}(x_0)\|}\|v_1\| < \|v_1\|,
\]
where the final inequality follows from~\eqref{Eq:epsAssump}.  Then, since $v_1 + \frac{t}{\|\dot{\eta}(x_0)\|}\Bigl(I - \frac{\dot{\eta}(x_0)\dot{\eta}(x_0)^T}{\|\dot{\eta}(x_0)\|^2}\Bigr)\ddot{\eta}(x_0)v_1$ and $n(x_0)v_2$ are orthogonal, it follows that $Dg_{(x_0,t)}$ is indeed invertible.  The inverse function theorem \citep[e.g.][p.~13]{GuilleminPollack1974} then gives that $g$ is a local diffeomorphism, and moreover, by \citet[][Exercise~5, p.~18]{GuilleminPollack1974} and the fact that $g$ is bijective, we can conclude that $g$ is in fact a diffeomorphism.
\end{proof}

\subsection{Forms, pullbacks and integration on manifolds}
\label{Sec:Forms}

Let $V$ be a (real) vector space of dimension $m$.  We say $T:V^p \rightarrow \mathbb{R}$ is a \emph{$p$-tensor} on $V$ if it is $p$-linear, and write $\mathcal{F}^p(V^*)$ for the set of $p$-tensors on $V$.  If $T \in \mathcal{F}^p(V^*)$ and $S \in \mathcal{F}^q(V^*)$, we define their \emph{tensor product} $T \otimes S \in \mathcal{F}^{p+q}(V^*)$ by
\[
T \otimes S(v_1,\ldots,v_p,v_{p+1},\ldots,v_{p+q}) := T(v_1,\ldots,v_p)S(v_{p+1},\ldots,v_{p+q}).
\]
Let $S_p$ denote the set of permutations of $\{1,\ldots,p\}$.  If $\pi \in S_p$ and $T \in \mathcal{F}^p(V^*)$, we can define $T^\pi \in \mathcal{F}^p(V^*)$ by $T^\pi(v) := T(v_{\pi(1)},\ldots,v_{\pi(p)})$ for $v = (v_1,\ldots,v_p) \in V^p$.  We say $T$ is \emph{alternating} if $T^\sigma = -T$ for all transpositions $\sigma:\{1,\ldots,p\} \rightarrow \{1,\ldots,p\}$.  The set of alternating $p$-tensors on $V$, denoted $\Lambda^p(V^*)$, is a vector space of dimension $\binom{m}{p}$.  The function $\mathrm{Alt}:\mathcal{F}^p(V^*) \rightarrow \Lambda^p(V^*)$ is defined by
\[
\mathrm{Alt}(T) := \frac{1}{p!}\sum_{\pi \in S_p} (-1)^{\mathrm{sgn}(\pi)}T^\pi,
\]
where $\mathrm{sgn}(\pi)$ denotes the sign of the permutation $\pi$.  If $T \in \Lambda^p(V^*)$ and $S \in \Lambda^q(V^*)$, we define their \emph{wedge product} $T \wedge S \in \Lambda^{p+q}(V^*)$ by
\[
T \wedge S := \mathrm{Alt}(T \otimes S).
\] 
If $W$ is another (real) vector space and $A:V \rightarrow W$ is a linear map, we define the \emph{transpose} $A^\ast:\Lambda^p(W^*) \rightarrow \Lambda^p(V^*)$ of $A$ by
\[
A^\ast T(v_1,\ldots,v_p) := T(Av_1,\ldots,Av_p).
\]
Let $\mathcal{S}$ be a manifold.  A \emph{$p$-form} $\omega$ on $\mathcal{S}$ is a function which assigns to each $x \in \mathcal{S}$ an element $\omega(x) \in \Lambda^p(T_x(\mathcal{S})^*)$.  If $\omega$ is a $p$-form on $\mathcal{S}$ and $\theta$ is a $q$-form on $\mathcal{S}$, we can define their wedge product $\omega \wedge \theta$ by $(\omega \wedge \theta)(x) := \omega(x) \wedge \theta(x)$.  For $j=1,\ldots,m$, let $x_j:\mathbb{R}^m \rightarrow \mathbb{R}$ denote the coordinate function $x_j(y_1,\ldots,y_m) := y_j$.  These functions induce $1$-forms $dx_j$, given by $dx_j(x)(y_1,\ldots,y_m) = y_j$ (so $dx_j(x) = D(x_j)_x$ in our previous notation).  Letting $\mathcal{I} := \{(i_1,\ldots,i_p):1 \leq i_1 < \ldots < i_p \leq m\}$, for $I = (i_1,\ldots,i_p) \in \mathcal{I}$, we write
\[
dx_I := dx_{i_1} \wedge \ldots \wedge dx_{i_p}.
\]
It turns out \citep[][p.~163]{GuilleminPollack1974} that any $p$-form on an open subset $U$ of $\mathbb{R}^m$ can be uniquely expressed as 
\begin{equation}
\label{Eq:sumI}
\sum_{I \in \mathcal{I}} f_I \, dx_I, 
\end{equation}
where each $f_I$ is a real-valued function on $U$.

Recall that the set of all ordered bases of a vector space $V$ is partitioned into two equivalence classes, and an \emph{orientation} of $V$ is simply an assignment of a positive sign to one equivalence class and a negative sign to the other.  If $V$ and $W$ are oriented vector spaces in the sense that an orientation has been specified for each of them, then an isomorphism $A:V \rightarrow W$ always either preserves orientation in the sense that for any ordered basis $\beta$ of $V$, the ordered basis $A\beta$ has the same sign as $\beta$, or it reverses it.  We say an $m$-dimensional manifold $\mathcal{X}$ is \emph{orientable} if for every $x \in \mathcal{X}$, there exist an open subset $U$ of $\mathbb{R}^m$, a neighbourhood $V$ of $x$ in $\mathcal{X}$ and a diffeomorphism $\phi:U \rightarrow V$ such that $D\phi_u:\mathbb{R}^m \rightarrow T_x(\mathcal{X})$ preserves orientation for every $u \in U$.  A map like $\phi$ above whose derivative at every point preserves orientation is called an \emph{orientation-preserving} map.    

If $\mathcal{X}$ and $\mathcal{Y}$ are manifolds, $\omega$ is a $p$-form on $\mathcal{Y}$ and $\psi:\mathcal{X} \rightarrow \mathcal{Y}$ is differentiable, we define the \emph{pullback} $\psi^\ast\omega$ of $\omega$ by $\psi$ to be the $p$-form on $\mathcal{X}$ given by
\[
\psi^\ast\omega(x) := (D\psi_x)^\ast \omega\bigl(\psi(x)\bigr).
\]
If $V$ is an $p$-dimensional vector space and $A:V \rightarrow V$ is linear, then $A^\ast T = (\det A)T$ for all $T \in \Lambda^p(V)$ \citep[][p.~160]{GuilleminPollack1974}.

If $\omega$ is an $m$-form on an open subset $U$ of $\mathbb{R}^m$, then by~\eqref{Eq:sumI}, we can write $\omega = f \, dx_1 \wedge \ldots \wedge dx_m$.  If $\omega$ is an integrable form on $U$ (i.e.\ $f$ is an integrable function on $U$), we can define the integral of $\omega$ over $U$ by
\[
\int_U \omega := \int_U f(x_1,\ldots,x_m) \, dx_1\ldots dx_m,
\]
where the integral on the right-hand side is a usual Lebesgue integral.  Now let $\mathcal{S}$ be an $m$-dimensional orientable manifold that can be parametrised with a single chart, in the sense that there exists an open subset $U$ of $\mathbb{R}^m$ and an orientation-preserving diffeomorphism $\phi:U \rightarrow \mathcal{S}$.  Define the \emph{support} of an $m$-form $\omega$ on $\mathcal{S}$ to be the closure of $\{x \in \mathcal{S}:\omega(x) \neq 0\}$.  If $\omega$ is compactly supported, then its pullback $\phi^\ast \omega$ is a compactly supported $m$-form on $U$; moreover $\phi^\ast \omega$ is integrable, and we can define the integral over $\mathcal{S}$ of $\omega$ by
\begin{equation}
\label{Eq:IntOmega}
\int_{\mathcal{S}} \omega := \int_U \phi^\ast \omega.
\end{equation}
Alternatively, we can suppose that $\omega$ is non-negative and measurable in the sense that $\phi^\ast \omega = f \, dx_1 \wedge \ldots \wedge dx_m$, say, with $f$ non-negative and measurable on $U$.  In this case, we can also define the integral of $\omega$ over $\mathcal{S}$ via~\eqref{Eq:IntOmega}.  

More generally, integrals of forms over more complicated manifolds can be defined via partitions of unity.  Recall \citep[][p.~52]{GuilleminPollack1974} that if $\mathcal{X}$ is an arbitrary subset of $\mathbb{R}^M$, and $\{V_\alpha:\alpha \in A\}$ is a (relatively) open cover of $\mathcal{X}$, then there exists a sequence of real-valued, differentiable functions $(\rho_n)$ on $\mathcal{X}$, called a \emph{partition of unity} with respect to $\{V_\alpha:\alpha \in A\}$, with the following properties:
\begin{enumerate}
\item $\rho_n(x) \in [0,1]$ for all $n \in \mathbb{N}$;
\item Each $x \in \mathcal{X}$ has a neighbourhood on which all but finitely many functions $\rho_n$ are identically zero;
\item Each $\rho_n$ is identically zero except on some closed set contained in some $V_\alpha$;
\item $\sum_{n=1}^\infty \rho_n(x) = 1$ for all $x \in \mathcal{X}$.
\end{enumerate}
Now let $\mathcal{S} \subseteq \mathbb{R}^d$ be an $m$-dimensional, orientable manifold, so for each $x \in \mathcal{S}$, there exist an open subset $U_x$ of $\mathbb{R}^m$, a neighbourhood $V_x$ of $x$ in $\mathcal{S}$ and an orientation-preserving diffeomorphism $\phi_x:U_x \rightarrow V_x$.  If $\omega$ is a compactly supported $m$-form on $\mathcal{S}$ and $(\rho_n)$ denotes a partition of unity on $\mathcal{S}$ with respect to $\{V_x:x \in \mathcal{S}\}$, we can define the integral of $\omega$ over $\mathcal{S}$ by
\begin{equation}
\label{Eq:IntOmega2}
\int_{\mathcal{S}} \omega := \sum_{n=1}^\infty \int_{\mathcal{S}} \rho_n \omega.
\end{equation}
In fact, writing $\Omega$ for the compact support of $\omega$, we can find a neighbourhood $W_x$ of $x \in \Omega$, $x_1,\ldots,x_N \in \Omega$ and a finite subset $N_j$ of $\mathbb{N}$ such that $\{\rho_n:n \notin N_j\}$ are identically zero on $W_{x_j}$, and such that 
\[
\int_{\mathcal{S}} \omega = \sum_{j=1}^N \sum_{n \in N_j} \int_{\mathcal{S}} \rho_n \omega.
\]
Thus the integral can be written as a finite sum.  Similarly, if $\omega$ is a non-negative $m$-form on $\mathcal{S}$, we can again define the integral of $\omega$ over $\mathcal{S}$ via~\eqref{Eq:IntOmega2}.  Finally, if $\omega$ is an integrable $m$-form on $\mathcal{S}$, the integral can be defined by taking positive and negative parts in the usual way.

In our work, we are especially interested in integrals of a particular type of form.  Given an $m$-dimensional, orientable manifold $\mathcal{S}$ in $\mathbb{R}^d$, the \emph{volume form} $d\mathrm{Vol}^m$ is the unique $m$-form on $\mathcal{S}$ such that at each $x \in \mathcal{S}$, the alternating $m$-tensor $d\mathrm{Vol}^m(x)$ on $T_x(\mathcal{S})$ gives value $1/m!$ to each positively oriented orthonormal basis for $T_x(\mathcal{S})$.  For example, when $\mathcal{S} = \mathbb{R}^m$, we have $d\mathrm{Vol}^m = dx_1 \wedge \ldots \wedge dx_m$, provided we consider the standard basis to be positively oriented.  As another example, if $\mathcal{R} \subseteq \mathbb{R}^d$ is a $d$-dimensional manifold and $\eta:\mathcal{R} \rightarrow \mathbb{R}$ is continuously differentiable with $\mathcal{S} = \{x \in \mathcal{R}:\eta(x)=1/2\}$ non-empty and $\dot{\eta}(x) \neq 0$ for $x \in \mathcal{S}$, then $\mathcal{S}$ is a $(d-1)$-dimensional, orientable manifold \citep[][Exercise~18, p.~106]{GuilleminPollack1974}.  If we say that an ordered, orthonormal basis $e_1,\ldots,e_{d-1}$ for $T_{x_0}(\mathcal{S})$ is positively oriented whenever $\det(e_1,\ldots,e_{d-1},\dot{\eta}(x_0)) > 0$, we have that 
\[
d\mathrm{Vol}^{d-1}(x_0) = \sum_{j=1}^d (-1)^{j+d}\frac{\eta_j(x_0)}{\|\dot{\eta}(x_0)\|}dx_1 \wedge \ldots \wedge dx_{j-1} \wedge dx_{j+1} \wedge \ldots \wedge dx_d(x_0),
\]
where $x_j$ denotes the $j$th coordinate function.  We now define an ordered, orthonormal basis $(e_1,0),\ldots,(e_{d-1},0), (0,1)$ for $T_{x_0}(\mathcal{S}) \times \mathbb{R}$ to be positively oriented.  Further, we define a $(d-1)$-form $\omega_1$ and a $1$-form $\omega_2$ on $\mathcal{S} \times (-\epsilon,\epsilon)$ by
\begin{align*}
\omega_1(x_0,t)\bigl((v_1,w_1),\ldots,(v_{d-1},w_{d-1})\bigr) &:= d\mathrm{Vol}^{d-1}(x_0)(v_1,\ldots,v_{d-1}) \\
\omega_2(x_0,t)(v_d,w_d) &:= dt(t)(w_d) = w_d. 
\end{align*}
Then, with $g$ defined as in Proposition~\ref{Prop:DG2}, and under the conditions of that proposition,
\begin{align*}
g^*(&dx_1 \wedge \ldots \wedge dx_d)(x_0,t)\bigl((e_1,0),\ldots,(e_{d-1},0), (0,1)\bigr) \\
&= dx_1 \wedge \ldots \wedge dx_d(x_0^t)\bigl(Dg_{(x_0,t)}(e_1,0),\ldots, Dg_{(x_0,t)}(e_{d-1},0),Dg_{(x_0,t)}(0,1)\bigr) \\
&= \frac{1}{d!} \det (I+tB) \\
&= \frac{1}{d} \det (I+tB) d\mathrm{Vol}^{d-1}(x_0)(e_1,\ldots,e_{d-1})dt(t)(1) \\
&= \det (I+tB) \ (\omega_1 \wedge \omega_2)(x_0,t)\bigl((e_1,0),\ldots,(e_{d-1},0), (0,1)\bigr),
\end{align*}
so $g^*(dx_1 \wedge \ldots \wedge dx_d)(x_0,t) = \det (I+tB) \ (\omega_1 \wedge \omega_2)(x_0,t)$.  It follows that if $h:\mathcal{S} \times (-\epsilon,\epsilon) \rightarrow \mathbb{R}$ is either compactly supported and integrable, or non-negative and measurable, then
\begin{equation}
\label{Eq:wedge1}
\int_{\mathcal{S} \times (-\epsilon,\epsilon)} h \, \omega_1 \wedge \omega_2 = \int_{\mathcal{S}} \int_{-\epsilon}^\epsilon h(x_0,t) \, dt \, d\mathrm{Vol}^{d-1}(x_0).
\end{equation}
We also require the change of variables formula: if $\mathcal{X}$ and $\mathcal{Y}$ are orientable manifolds and are of dimension $m$, and if $\psi:\mathcal{X} \rightarrow \mathcal{Y}$ is an orientation-preserving diffeomorphism, then
\begin{equation}
\label{Eq:wedge2}
\int_{\mathcal{X}} \psi^* \omega = \int_{\mathcal{Y}} \omega
\end{equation}
for every compactly supported, integrable $m$-form on $\mathcal{Y}$ \citep[][p.~168]{GuilleminPollack1974}.  In particular, if $f:\mathcal{S}^\epsilon \rightarrow \mathbb{R}$ is either compactly supported and integrable, or non-negative and measurable, then writing $x_0^t := x_0 + \frac{t\dot{\eta}(x_0)}{\|\dot{\eta}(x_0)\|}$, we have from~\eqref{Eq:wedge1} and~\eqref{Eq:wedge2} that
\begin{align}
\label{Eq:Conc}
\int_{\mathcal{S}^\epsilon} f(x) \, dx & = \int_{\mathcal{S} \times (-\epsilon,\epsilon)} \! \! \! \! \! \! \det(I + tB) f(x_0^t) \, (\omega_1 \wedge \omega_2)(x_0,t) \nonumber \\
& = \int_{\mathcal{S}} \int_{-\epsilon}^\epsilon \det(I + tB) f(x_0^t) \, dt \, d\mathrm{Vol}^{d-1}(x_0).
\end{align}

\end{appendix}


\begin{thebibliography}{99}
\bibitem[{Abramson(1982)}]{Abramson1982} Abramson, I.~S. (1982) On bandwidth estimation in kernel estimates -- a square root law. \newblock \textit{Ann. Statist.}, \textbf{10}, 1217--1223.
  
\bibitem[{Audibert and Tsybakov(2007)}]{Audibert:07} Audibert, J.-Y. and  Tsybakov, A.~B. (2007). Fast learning rates for plug-in classifiers.
  \newblock \textit{Ann. Statist.}, \textbf{35},  608--633.

\bibitem[{Berrett and Samworth(2019a)}]{Berrett:19} Berrett, T.~B. and Samworth, R.~J. (2019a) Efficient two-sample functional estimation and the super-oracle phenomenon.
  \newblock \texttt{https://arxiv.org/abs/1904.09347}.

\bibitem[{Berrett and Samworth(2019b)}]{Berrett:19b} Berrett, T.~B. and Samworth, R.~J. (2019b) Nonparametric independence testing via mutual information.
    \newblock \emph{Biometrika}, to appear.

\bibitem[{Berrett et al.(2019)}]{Berrett:16} Berrett, T.~B., Samworth, R.~J. and Yuan, M. (2019). Efficient multivariate entropy estimation via $k$-nearest neighbour distances.
\newblock \emph{Ann. Statist.}, \textbf{47}, 288--318.

\bibitem[{Biau et~al.(2010)Biau, C{\'e}rou and Guyader}]{Biau:10} Biau, G., C{\'e}rou, F. and Guyader, A. (2010). On the rate of convergence of the bagged nearest neighbor estimate.
\newblock \textit{J. Mach. Learn. Res.}, \textbf{11},  687--712.

\bibitem[{Biau and Devroye(2015)}]{Biau:15}Biau, G. and Devroye, L. (2015). \emph{Lectures on the Nearest Neighbor Method}. 
\newblock Springer, New York.

\bibitem[{Boucheron et~al.(2005)}]{BBL:05} Boucheron, S., Bousquet, O. and Lugosi, G. (2005). Theory of classification: a survey of some recent advances.
\newblock \emph{ESAIM: PS}, \textbf{9}, 323--375.

\bibitem[{Boucheron, Lugosi and Massart(2013)}]{Boucheron:13} Boucheron, S., Lugosi, G. and Massart, P. (2013). \textit{Concentration Inequalities}. \newblock Oxford University Press, Oxford.

\bibitem[{Breiman et~al.(1977)}]{Breiman1977} Breiman, L., Meisel, W. and Purcell, E. (1977) Variable kernel estimates of multivariate densities. \newblock \textit{Techonometrics}, \textbf{19}, 135--144.

\bibitem[{Celisse and Mary-Huard(2018)}]{Celisse:2018} Celisse, A. and Mary-Huard, T. (2018) Theoretical analysis of cross-validation for estimating the risk of the $k$-nearest neighbor classifier.
\newblock \textit{J. Mach. Learn. Res.}, \textbf{19} 1--54.

\bibitem[{Chapelle et~al.(2006)Chappelle, Zien and Sch\"olkopf}]{Chapelle:06} Chapelle, O., Zien, A., and Sch\"olkopf, B. (Eds.) (2006). \textit{Semi-supervised Learning}.
\newblock MIT Press, Cambridge MA.

\bibitem[{Chaudhuri and Dasgupta(2014)}]{Chaudhuri:14} Chaudhuri, K. and Dasgupta, S. (2014). Rates of convergence for nearest neighbor classification.
\newblock \emph{Advances in Neural Information Processing Systems}, \textbf{27}, 3437--3445.

\bibitem[{Cover and Hart(1967)}]{Cover:67} Cover, T.~M. and  Hart, P.~E. (1967). Nearest neighbour pattern classification.
\newblock \textit{IEEE Trans. Inf. Th.}, \textbf{13},  21--27.

\bibitem[{Devroye et~al.(1996)Devroye, Gyorfi and Lugosi}]{PTPR:96} Devroye, L., Gyorfi, L. and Lugosi, G. (1996). \textit{A Probabilistic Theory of Pattern Recognition}.
\newblock Springer, New York.

%\bibitem[{Duda and Hart(1973)}]{Duda:73}Duda, R. O. and Hart, P. E. (1973). \emph{Pattern Classification and Scene Analysis.} 
%\newblock Wiley, New York.

\bibitem[{Dudley(1999)}]{Dudley1999}Dudley, R. M. (1999). \emph{Uniform Central Limit Theorems}.
\newblock Cambridge University Press, Cambridge.

\bibitem[{Duong(2015)}]{Duong:15}Duong, T. (2015). \texttt{ks}: Kernel smoothing.
\newblock \texttt{R} package version 1.9.4, \url{https://cran.r-project.org/web/packages/ks}.

\bibitem[{Fix and Hodges(1951)}]{Fix:51} Fix, E. and  Hodges, J.~L. (1951). Discriminatory analysis -- nonparametric discrimination: Consistency properties.
\newblock Technical Report number 4, USAF School of Aviation Medicine, Randolph Field, Texas.

\bibitem[{Fix and Hodges(1989)}]{Fix:89}
Fix, E. and Hodges, J.~L. (1989). Discriminatory analysis -- nonparametric
  discrimination: Consistency properties.
\newblock \emph{Internat. Statist. Rev.}, \textbf{57}, 238--247.

\bibitem[{Gadat et~al.(2016)Gadat, Klein and Marteau}]{Gadat:16} Gadat, S., Klein, T. and Marteau, C. (2016). Classification with the nearest neighbour rule in general finite dimensional spaces.
\newblock \textit{Ann. Statist.}, \textbf{44}, 982--1001.

\bibitem[{Gin\'{e} and Guillou(2002)}]{Gine:2002}Gin\'{e}, E. and Guillou, A. (2002). Rates of strong uniform consistency for multivariate kernel density estimators.
  \newblock \textit{Ann. Inst. H. Poincar\'{e} Probab. Statist.}, \textbf{38}, 907-921.

\bibitem[{Gin\'{e} and Sang(2010)}]{Gine2010} Gin\'{e}, E. and Sang, H. (2010) Uniform asymptotics for kernel density estimators with variable bandwidths. \newblock{J. Nonparam. Statist.}, \textbf{22}, 773--795.

\bibitem[{Gray(2004)}]{Gray:04}Gray, A. (2004). \emph{Tubes, 2nd ed.} 
\newblock Progress in Mathematics 221. Birkh\"{a}user, Basel.

\bibitem[{Guillemin and Pollack(1974)}]{GuilleminPollack1974}Guillemin, V. and Pollack, A. (1974). \emph{Differential Geometry}.  Prentice-Hall, New Jersey.

\bibitem[{Hall and Kang(2005)}]{Hall:05a} Hall, P., and  Kang, K.-H. (2005). Bandwidth choice for nonparametric classification.
\newblock \textit{Ann. Statist.}, \textbf{33}, 284--306.

\bibitem[{Hall et~al.(2008)Hall, Park and Samworth}]{Hall:08} Hall, P., Park, B.~U. and Samworth, R.~J. (2008). Choice of neighbour order in nearest-neighbour classification.
\newblock \textit{Ann. Statist.}, \textbf{36},  2135--2152.

\bibitem[{Heckel and B\"olcskei(2015)}]{Heckel:15}Heckel, R. and B\"olcskei, H. (2015) Robust subspace clustering via thresholding.
\newblock \emph{IEEE Trans. Info. Th.}, \textbf{61}, 6320--6342.

\bibitem[{Kozachenko and Leonenko(1987)}]{Kozachenko:87}Kozachenko, L.~F. and Leonenko, N.~N. (1987). Sample estimate of the entropy of a random vector. 
\newblock \emph{Probl. Inform. Transm.}, \textbf{23}, 95--101.

\bibitem[{Kulkarni and Posner(1995)}]{Kulkarni:95}Kulkarni, S. R. and Posner, S. E. (1995). Rates of convergence of nearest neighbor estimation under arbitrary sampling.
  \newblock \emph{IEEE Trans. Info. Th.}, \textbf{41}, 1028--1039.

\bibitem[{Loftsgaarden and Quesenberry(1965)}]{Loftsgaarden:1965} Loftsgaarden, D.~O. and Quesenberry, C.~P. (1965) A nonparametric estimate of a multivariate density function. \newblock \textit{Ann. Math. Statist.}, \textbf{36}, 1049--1051.

\bibitem[{Mack(1983)}]{Mack:1983} Mack, Y.~P. (1983) Rate of strong uniform convergence of $k$-NN density estimates. \newblock \textit{J. Statist. Planning and Inference}, \textbf{8}, 185--192.

\bibitem[{Mack and Rosenblatt(1979)}]{Mack:1979} Mack, Y.~P. and Rosenblatt, M. (1979) Multivariate $k$-nearest neighbour density estimates. \newblock \textit{J. Multivariate Analysis}, \textbf{9}, 1--15.

\bibitem[{Mammen and Tsybakov(1999)}]{Mammen:99} Mammen, E. and  Tsybakov, A.~B. (1999). Smooth discriminant analysis.
\newblock \textit{Ann. Statist.}, \textbf{27}, 1808--1829.


\bibitem[{Paditz(1989)}]{Paditz1989}Paditz, L. (1989). On the analytical structure of the constant in the
nonuniform version of the Esseen inequality.
\newblock \emph{Statistics}, \textbf{20}, 453--464.

\bibitem[{Samworth(2012)}]{Samworth:12a} Samworth, R.~J. (2012). Optimal weighted nearest neighbour classifiers.
\newblock \textit{Ann. Statist.}, \textbf{40},  2733--2763.

\bibitem[{Schilling(1986)}]{Schilling:86} Schilling, M. F. (1986). Multivariate two-sample tests based on nearest neighbors.
\newblock \emph{J. Amer. Statist. Assoc.}, \textbf{81}, 799--806.

\bibitem[{Shorack and Wellner(1986)}]{Shorack:86} Shorack, G. R. and Wellner, J. A. (1986). \emph{Empirical Processes with Applications to Statistics}.  Wiley, New York.

\bibitem[{Stone(1977)}]{Stone:77} Stone, C.~J. (1977). Consistent nonparametric regression.
\newblock \textit{Ann. Statist.}, \textbf{5},  595--620.

%\bibitem[{Cannings, Berrett and Samworth(2019)}]{CBS:2017} Cannings, T. I., Berrett, T. B. and Samworth, R. J. (2019). Local nearest neighbour classification with applications to semi-supervised learning.
 % \newblock  \emph{Ann. Statist.}, submitted.
\end{thebibliography}
\end{document}